\newtheorem{thm}{Theorem}[section]
\newtheorem{prop}[thm]{Proposition}
\newtheorem{lem}[thm]{Lemma}
\theoremstyle{definition}
\newtheorem{example}[thm]{Example}
\newtheorem{defn}[thm]{Definition}
\newtheorem{remark}[thm]{Remark}
\newtheorem{hypothesis}[thm]{Hypothesis}
\numberwithin{equation}{section}
\newcommand{\bbZ}{{\mathbb{Z}}}
\newcommand{\bbR}{{\mathbb{R}}}
\newcommand{\bbP}{{\mathbb{P}}}
\newcommand{\bbG}{{\mathbb{G}}}
\newcommand{\bbC}{{\mathbb{C}}}
\newcommand{\GL}{\operatorname{GL}}
\newcommand{\PGL}{\operatorname{PGL}}
\newcommand{\Aut}{\operatorname{Aut}}
\newcommand{\tAut}{\widetilde{\operatorname{Aut}}}
\newcommand{\Hom}{\operatorname{Hom}}
\newcommand{\sHom}{\mathcal{H}\kern -.5pt om}
\newcommand{\Ext}{\operatorname{Ext}}
\newcommand{\End}{\operatorname{End}}
\newcommand{\Spec}{\operatorname{Spec}}
\newcommand{\id}{\operatorname{id}}
\newcommand{\im}{\operatorname{im}}
\newcommand{\Pic}{\operatorname{Pic}}
\newcommand{\Cl}{\operatorname{Cl}}
\newcommand{\calF}{\mathcal{F}}
\newcommand{\calR}{\mathcal{R}}
\newcommand{\calW}{\mathcal{W}}
\newcommand{\calN}{\mathcal{N}}
\newcommand{\calO}{\mathcal{O}}
\newcommand{\cox}{\widetilde} 
\newcommand{\Cox}{\operatorname{Cox}}
\newcommand{\op}{\operatorname{op}}
\newcommand{\Br}{\operatorname{Br}}
\newcommand{\Res}{\operatorname{Res}}
\newcommand{\Div}{\operatorname{div}}
\newcommand{\Nef}{\operatorname{Nef}}
\newcommand{\Split}{{\operatorname{split}}}
\newcommand{\neut}{{\operatorname{neut}}}
\newcommand{\Weil}[1]{\operatorname{R_{#1}}}
\newcommand{\longhookrightarrow}{
	\ensuremath{\lhook\joinrel\relbar\joinrel\rightarrow}}
\begin{document}

\title{Twisted forms of toric varieties}

\date{\today}

\author{Alexander Duncan} 
\thanks{The author was partially supported by
National Science Foundation
RTG grants DMS 0838697 and DMS 0943832.}

\begin{abstract}
We consider the set of forms of a toric variety over an arbitrary field:
those varieties which become isomorphic to a toric variety after base
field extension.
In contrast to most previous work, we also consider arbitrary isomorphisms
rather than just those that respect a torus action.
We define an injective map from the set of forms of a toric variety to a
non-abelian second cohomology set, which generalizes the usual Brauer
class of a Severi-Brauer variety.
Additionally, we define a map from the set of forms of a toric variety
to the set of forms of a separable algebra along similar lines to a
construction of A.~Merkurjev and I.~Panin.
This generalizes both a result of M.~Blunk for del Pezzo surfaces of
degree 6, and the standard bijection between Severi-Brauer varieties and
central simple algebras.
\end{abstract}

\subjclass[2010]{Primary 14M25, Secondary 12G05, 16H05}
\keywords{toric varieties,
Cox rings,
separable algebras,
Galois cohomology,
non-abelian cohomology}

\maketitle

\section{Introduction}
\label{sec:intro}

Let $k$ be an arbitrary field.
A \emph{toric variety} is a normal variety $X$ over $k$ with a faithful
action of a torus $T$ which has a dense open orbit.

When the field $k$ is the complex numbers $\bbC$, toric varieties have been
extensively studied (see, for example, \cite{Ful93Introduction} or
\cite{Cox95The-homogeneous}).
Among the simplest examples is the projective line $\bbP^1$ where the torus
$\bbG_m$ acts via
\[
\lambda \cdot (x:y) = (\lambda x : y)
\]
for $\lambda \in \bbC^\times$ with open orbit $x, y \ne 0$.
In most literature,
as part of the definition of ``toric variety''
the torus is identified with the open orbit on which it acts.
This identification is always possible over $\bbC$.

However, when the ground field $k$ is not separably closed,
the torus $T$ may not be the standard split torus $\bbG_m^n$.
Indeed, when $k$ is the real numbers $\bbR$,
the projective line also has an action of the circle
group $S^1$ via multiplication by rotation matrices.
Thus, $\bbP^1_\bbR$ has the structure of a toric variety for the two
non-isomorphic tori $S^1$ and $\bbG_m$.

Moreover, for general base fields the open orbit
may not even have a rational point.
Indeed, when $k = \bbR$, the conic
\[
C = \{ x^2 + y^2 + z^2 = 0 \} \subset \bbP^1(\bbR)
\]
has an action under the circle group $S^1$ via rotation matrices
on the coordinates $x,y$.
Thus the conic $C$ is a toric variety, but has no $\bbR$-points.

When the torus $T$ is split, we call $X$ a \emph{split} toric variety.
Most of the literature on toric varieties is about the split case
since this is the only case when $k=\bbC$.
In \cite{Vos82Projective}, V.~Voskresenski\u\i\ studies toric varieties
for general tori $T$, but assumes that the open orbit has a rational
point; we will call these \emph{neutral} toric varieties.

Recall that a \emph{$k$-form} of a $k$-variety $X$ is a $k$-variety $X'$
such that
\[X \times_{\Spec(k)} \Spec(K) \simeq X' \times_{\Spec(k)} \Spec(K)\]
for some field extension $K/k$.
For example, $\bbP^1_\bbR$ and the conic $C$ both become isomorphic to
$\bbP^1_\bbC$ after base extension to $\bbC$.
All toric varieties are $k$-forms of a split toric variety.
The main goal of this paper is to study the set of isomorphism classes
of forms of toric varieties.

Note that there are several different natural notions of isomorphism
that one can use.
We consider three different categories of toric varieties
(made precise in Section~\ref{sec:prelimToric}),
each of which has a different notion of isomorphism, and thus, of
$k$-form.

When the toric variety is \emph{neutral} we can identify the
torus with its open orbit and consider the category $\calW$ with
\emph{toric morphisms} which restrict to group homomorphisms of the
tori;
this is the notion of isomorphism studied in \cite{Vos82Projective}.
The category $\calN$ consists of toric varieties with torus-equivariant
morphisms;
this category is implicit in, for example, \cite{VosKly84Toric},
\cite{MerPan97K-theory}, and \cite{EliLimSot14Arithmetic}.
Finally, the category $\calR$ consists of toric varieties with arbitrary
morphisms which completely ignore the toric structure.
To see how the different categories differ, 
note that the two different torus actions on $\bbP^1_\bbR$ discussed
above give rise to two distinct isomorphism classes in $\calN$, but
they are not distinct in $\calR$ since the underlying varieties are the
same.

Our approach is a direct generalization of standard techniques for
studying forms of projective space.  We review this now.
Recall that a \emph{Severi-Brauer variety} is a $k$-form of projective space.
The set of isomorphism classes of $k$-forms of a (suitably nice) algebraic
object is in bijection with the \'etale cohomology set $H^1(k,\Aut(X))$,
which is simply Galois cohomology since $k$ is a field.
Thus the set of isomorphism classes of Severi-Brauer varieties
is in bijection with the set $H^1(k,\PGL_n)$.

Recall that the projective space $X=\bbP^{n-1}$ has a homogeneous
coordinate ring whose spectrum is a vector space $V \simeq k^n$.
The space $X$ is a quotient of an open subset of the vector space $V$
by the group $\bbG_m$.
This construction gives rise to an exact sequence
\begin{equation} \label{eq:introPnSeq}
1 \to \bbG_m \to \GL_{n} \to \PGL_{n} \to 1
\end{equation}
where $\GL_n$ acts on $V$ and $\PGL_{n}$ is the automorphism group of
$\bbP^{n-1}$.

From the exact sequence \eqref{eq:introPnSeq}, the long exact sequence in
non-abelian Galois cohomology produces a well-known injection
\begin{equation} \label{eq:introP2H2inj}
H^1(k, \PGL_{n} ) \hookrightarrow H^2(k, \bbG_m )
\end{equation}
where $H^1(k, \PGL_{n} )$ is in bijection with the set of
isomorphism classes of Severi-Brauer varieties of dimension $n-1$
and the group $H^2(k, \bbG_m )$ is the Brauer group $\Br(k)$.

For a complete split toric variety $X$, the \emph{Cox ring} of $X$,
introduced in \cite{Cox95The-homogeneous},
generalizes the usual homogeneous coordinate ring for projective space.
Again, the spectrum of the Cox ring is a vector space $V$ that has an
open subset from which $X$ can be reconstructed as a quotient by a
diagonalizable group $S$.
Here $S$ is the Cartier dual of the class group of $X$,
which generalizes $\bbG_m$ from the case where $X$ is $\bbP^{n-1}$.

There exists a linear algebraic group $\tAut(X)$ acting on the Cox ring,
generalizing $\GL_n$ from the case of projective space,
which has a more convenient description than
the automorphism group scheme $\Aut(X)$.
Under certain assumptions on $X$,
the standard sequence \eqref{eq:introPnSeq} is generalized by the bottom
row of the following commutative diagram with exact rows
\begin{equation} \label{eq:twoLines}
\xymatrix{
1 \ar@{->}[r] &
S \ar@{->}[r] \ar@{=}[d] &
\cox{T} \rtimes W \ar@{->}[r] \ar@{->}[d] &
T \rtimes W \ar@{->}[r] \ar@{->}[d] &
1 \\
1 \ar@{->}[r] &
S \ar@{->}[r] &
\tAut(X) \ar@{->}[r] &
\Aut(X) \ar@{->}[r] &
1 \\
}
\end{equation}
where $T$ is a maximal torus of $\Aut(X)$,
the group $\cox{T}$ is a maximal torus of $\tAut(X)$,
and $W$ is the group of toric automorphisms of $X$.
The diagram \eqref{eq:twoLines} is essentially due to D.~Cox;
it follows from Theorem~\ref{thm:twoLines} below.

Galois cohomology can be viewed as a functor $H^1(k,-)$ which behaves
well in exact sequences.
In Theorem~\ref{thm:mainSquare}, we show how the sets of isomorphism
classes for the three categories, as well as their relationships to
each other and to their subsets of neutral forms, can be readily
obtained from the Galois cohomology sets associated to
\eqref{eq:twoLines}.
Applying $H^1(k,-)$ to the rightmost square we may intrepret each set as
the $k$-forms of $X$ (up to isomorphism) in an appropriate category:
\begin{equation} \label{eq:mainSquareInterp}
\xymatrix{
{\text{\parbox{3cm}{
\centering
\begin{tabular}{|c|}
\hline
Forms in $\calW$\\
\hline
neutral $X$\\
specified $T$\\
\hline
\end{tabular}
}}}
\ar@{^{(}->}[r] \ar@{->>}[d] &
{\text{\parbox{3cm}{
\centering
\begin{tabular}{|c|}
\hline
Forms in $\calN$\\
\hline
arbitrary $X$\\
specified $T$\\
\hline
\end{tabular}
}}}
\ar@{->>}[d] \ar@{-->}@/_1pc/[l] \\
{\text{\parbox{3cm}{
\centering
\begin{tabular}{|c|}
\hline
 - \\
\hline
neutral $X$\\
unspecified $T$\\
\hline
\end{tabular}
}}}
\ar@{^{(}->}[r] \ar@{-->}@/_1pc/[u] &
{\text{\parbox{3cm}{
\centering
\begin{tabular}{|c|}
\hline
Forms in $\calR$\\
\hline
arbitrary $X$\\
unspecified $T$\\
\hline
\end{tabular}
}}}
\ar@{-->}@/_1pc/[l] \\
}
\end{equation}
Here the vertical maps are surjections, the horizontal maps are
injections and the dashed arrows are canonical sections or retracts.
We see that the isomorphism classes are naturally partitioned into
\emph{neutralization classes} each of which contains exactly one neutral
toric variety.

Note that $\Aut(X)$ has a natural induced action on $\Cl(X)$ and thus on
its Cartier dual $S$.
If $J$ is the image of the action of $\Aut(X)$ on $\Cl(X)$,
then the group $J$ is finite but non-trivial in general.
Thus, unlike the sequence \eqref{eq:introPnSeq} for projective space,
$S$ is not necessarily central in \eqref{eq:twoLines}
and one should not expect a map $H^1(k,\Aut(X)) \to H^2(k,S)$
for a general toric variety.
Nevertheless, using the theory of non-abelian $H^2$
from~\cite{Spr66Nonabelian}~and~\cite{Gir71Cohomologie},
we prove in Theorem~\ref{thm:H2main} that there is an injection
\begin{equation} \label{eq:introH2inj}
H^1(k, \Aut(X) ) \hookrightarrow H^2(k, S \to J )
\end{equation}
where $H^2(k, S \to J )$ is a structured set defined below 
which is a natural analog of the Brauer group.
One may view the map \eqref{eq:introH2inj} as a refinement
of the \emph{elementary obstruction} from \cite{ColSan87La-descente}.

Finally, given a split smooth projective toric variety $X$, we construct
a canonical $k$-algebra $B$ in a similar vein as a construction of
A.~Merkurjev and I.~Panin in \cite{MerPan97K-theory}.
To each $k$-form of $X$ we associate a $k$-form of $B$.
This construction is a common generalization of the usual association of
a Severi-Brauer variety to a central simple algebra and a construction
of M.~Blunk~\cite{Blu10Del-Pezzo} for del Pezzo surfaces of degree $6$.
We investigate when the isomorphism class of the $k$-forms of $X$ can be
recovered from the isomorphism classes of $B$; it turns out to be
closely related to retract rationality of the $k$-forms of $X$.

The paper is structured as follows.
In Sections \ref{sec:prelims}, \ref{sec:prelimToric}, and
\ref{sec:splitToric}, we fix notation, state definitions, and review
basic facts that will be needed later in the paper.
In Section \ref{sec:twists}, we use Galois cohomology and the
structure theory of split toric varieties to classify their forms.
In Section \ref{sec:twistedProps}, we extend the structure theory
of the split toric variety to the general case.
In Section \ref{sec:non-abelianH2}, we define the set $H^2(k,S \to J)$
and prove the injection \eqref{eq:introH2inj}.
In Sections~\ref{sec:H2approx},~\ref{sec:MerkPanin},
and~\ref{sec:approximation}, we use the preceeding theory to
investigate when Blunk's construction can be generalized to
other toric varieties.

\section{Preliminaries}
\label{sec:prelims}

Let $k$ be a field.  We will denote by $k_s$ a separable closure
of $k$.  We denote by $\Gamma_k$ the absolute Galois group of the field
$k$, which is a profinite group.

A \emph{variety} $X$ is a geometrically integral separated scheme of
finite type over a field $k$.
A \emph{group scheme} $G$ will always be a group scheme of finite type
over a field $k$.
An \emph{algebraic group} $G$ is a smooth group scheme of finite
type over a field $k$.

Given a field extension $K/k$ we denote by
\[ X_K := X \times_{\Spec(k)} \Spec(K) \]
the pullback, which is a variety defined over $K$.
For the separable closure,
we use the shorthand $\overline{X} := X_{k_s}$.
A \emph{$k$-form of $X$}, is a variety $X'$ defined over $k$
such that $X'_K \simeq X_K$ for some field extension $K/k$.

We assume that reader is familiar with Galois cohomology
(see, e.g., \cite{Ser02Galois}).
For an algebraic group $G$, $H^i(k,G)$ will denote the
$i$th Galois cohomology set $H^i(\Gamma_k,G(k_s))$.
This is an abelian group when $G$ is
abelian, a group when $i=0$, and a pointed set
when $G$ is non-abelian and $i=1$.

Of fundamental importance to this paper is the well-known
functorial bijection of pointed sets
\[
H^1(k,\Aut(X)) \simeq
\left\{
{\text{\parbox{4cm}{
\centering
\begin{tabular}{c}
isomorphism classes\\
of $k$-forms of $X$
\end{tabular}
}}}
\right\}\]
which holds when $X$ is quasiprojective and $\Aut(X)$ is an
algebraic group.

For an algebraic group $G$,
an element $\gamma \in H^1(k,G)$ can be represented by a
\emph{cocycle} $c$ or by a $G$-torsor $T$ (also called a principal
homogeneous space).
If $X$ is an algebraic variety (or algebraic group, or $k$-algebra)
with a $G$-action, then we can construct the twisted variety ${}_cX$
or ${}^TX$ as in \S 5.3 of \cite{Ser02Galois}.

\subsection{Algebras}

We refer the reader to \S 1, \S 18, and \S 23 of \cite{KnuMerRos98The-book}
for many of the results that follow.
We assume throughout that all algebras are associative and unital.
Given an algebra $A$, we denote its opposite algebra by $A^{\op}$.

An \emph{\'etale $k$-algebra $E$} is a direct product
\[ E = F_1 \times \cdots \times F_r \]
where $F_1,\ldots, F_r$ are separable field extensions of $k$.
An \'etale algebra $E$ is \emph{split} if every field $F_i$ in the
decomposition is isomorphic to $k$.
The \emph{degree} of an \'etale algebra $E$ is its dimension as a
vector space over $k$.

A \emph{central simple algebra over $k$} is a $k$-algebra $A$ such that
there exists a field $K$ for which $A_K \simeq M_n(K)$ where $M_n(K)$ is
the algebra of $n \times n$-matrices over $K$.  The algebra $A$ is
\emph{split} if $A \simeq M_n(k)$ over the original field.

A \emph{separable algebra $A$} is a finite-dimensional $k$-algebra
which is a finite product
\[ A = A_1 \times \cdots \times A_r \]
where each $A_i$ is a central simple algebra over a finite separable
field extension $F_i$ of $k$.
The algebra $A$ is \emph{neutral} if every algebra $A_i$ is
split as a central simple algebra over $F_i$.
The algebra $A$ is \emph{split} if $F_1 = \cdots = F_r = k$
and every central simple algebra $A_i$ is split over $k$.
Note that the number of simple algebras $r$ may increase after a base
field extension.

Every separable algebra has a \emph{split form} $A_\Split$
which is the unique split $k$-form of $A$.
Every separable algebra $A$ has a \emph{neutralization} $A_\neut$ where
we replace each $A_i$ in the product with its split form as an
$F_i$-algebra.
We define a \emph{neutralization class} of a separable $k$-algebra $A$ to be
the set of $k$-forms of $A$ which have the same neutralization.
The center $Z(A)$ of a separable $k$-algebra is an \'etale $k$-algebra.
The center $Z(A)$ is invariant within neutralization classes.

\subsection{Automorphisms}

Given a $k$-algebra $A$, let $\Aut(A)$ denote the group scheme of
automorphisms of $A$.

If $E$ is an \'etale $k$-algebra of degree $n$, then
$\Aut(E)$ is a form of the symmetric group $S_n$;
thus, $\Aut(E)$ is finite \'etale over $k$.

Given a finite dimensional $k$-algebra $A$, we denote by
$\GL_1(A)$ the algebraic group representing the functor
\[ R \to (A_R)^\times \]
on commutative $k$-algebras $R$ (see \S 20 of~\cite{KnuMerRos98The-book}).

Let $A$ be a separable $k$-algebra with center $Z(A)$.
By \S 23 of \cite{KnuMerRos98The-book}, the connected
component of $\Aut(A)$ is given by
\[ \Aut(A)^\circ \simeq \GL_1(A)/\GL_1(Z(A)) \]
and $\pi_0(\Aut(A))$, the quotient by the connected component, is a
subgroup of $\Aut(Z(A))$.

Given an algebraic subgroup $I$ of $\pi_0(\Aut(A))$, the \emph{$I$-restricted
automorphism group of $A$}, denoted $\Aut_I(A)$, is the preimage of $I$
in $\Aut(A)$.
Note, in particular, when $I$ is trivial, $\Aut_1(A) = \Aut(A)^\circ$.

\subsection{Groups of multiplicative type}

Most of the material here can be found in,~e.g.,~\cite{Vos98Algebraic}.

Let $\bbG_m = \GL_1(k)$.
A group scheme $S$ of finite type is \emph{diagonalizable} if $S$ is a
closed subgroup of $(\bbG_m)^n$ for some positive integer $n$.
A \emph{group of multiplicative type} is a group scheme $S$
such that $\overline{S}$ is diagonalizable.
A group of multiplicative type is \emph{split}
if it is diagonalizable.
An algebraic group $S$ is a \emph{torus} if
$\overline{S} \simeq (\bbG_m)^n$  for some non-negative integer $n$.

Let $\Gamma$ be a profinite group.
A \emph{$\Gamma$-module} $L$ is a finitely generated abelian group $L$
with a continuous action of $\Gamma$ where $L$ is endowed with the
discrete topology.
A \emph{$\Gamma$-lattice} is a torsion-free $\Gamma$-module.

There is an exact anti-equivalence between the
category of groups of multiplicative type
and the category of $\Gamma_k$-modules
which we will call ``duality.''
Given a group $S$ of multiplicative type, the \emph{character group},
$\widehat{S}$, is the corresponding $\Gamma_k$-module.
Conversely, given a $\Gamma_k$-module $L$, the corresponding group of
multiplicative type will be denoted $D(L)$.
Under this equivalence, tori correspond to $\Gamma$-lattices.

We remark that if the torsion subgroup of the underlying abelian group of
a $\Gamma_k$-module $L$ has order relatively prime to the
characteristic, then $D(L)$ is an algebraic group (in other words, it is
smooth).

The image of the map $\Gamma_k \to \Aut(\widehat{S})$ is a finite group
which we call the \emph{decomposition group of $S$}.
One can also define the \emph{cocharacter group of $S$} as
the $\Gamma_k$-lattice $\Hom(\widehat{S}, \bbZ )$.
One can recover the original group scheme $S$ from its cocharacter lattice if
and only if $S$ is a torus.

\subsection{Weil Restrictions and Galois Cohomology}

Let $E$ be an \'etale $k$-algebra and let $\calF$ be a functor from
$E$-algebras to sets.
We define the \emph{Weil restriction} $\Weil{E/k}\calF$
of $\calF$ as the functor from commutative $k$-algebras to sets given by
\[
\Weil{E/k}\calF(R) = \calF(R \otimes_k E)
\]
for each $k$-algebra $R$.
The Weil restriction of an algebraic group (resp. variety) is also
an algebraic group (resp. variety).

\begin{lem} \label{lem:Shapiro}
Let $E$ be an \'etale $k$-algebra and $G$ be an algebraic group over
$E$.
There is a natural isomorphism
$H^i(k,\Weil{E/k}G) \simeq H^i(E,G)$
of groups (or pointed sets).
\end{lem}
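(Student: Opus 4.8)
The plan is to deduce this from the classical Shapiro lemma in Galois cohomology by identifying the correct coinduced Galois module structure. First I would reduce to the case where $E$ is a field. Writing $E = F_1 \times \cdots \times F_r$ as a product of separable field extensions, an algebraic group $G$ over $E$ amounts to a tuple $(G_1, \ldots, G_r)$ with $G_i$ an algebraic group over $F_i$; the Weil restriction splits as $\Weil{E/k}G \simeq \prod_i \Weil{F_i/k}G_i$ directly from the defining formula $\Weil{E/k}\calF(R) = \calF(R \otimes_k E)$, and likewise $H^i(E, G) \simeq \prod_i H^i(F_i, G_i)$ since $\Spec E$ is the disjoint union of the $\Spec F_i$. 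Both sides are therefore products over $i$, so it suffices to prove the statement for a single separable extension $F/k$.

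For $E = F$, the key computation is to determine the $\Gamma_k$-module structure on $(\Weil{F/k}G)(k_s) = G(k_s \otimes_k F)$. Since $F/k$ is separable, $k_s \otimes_k F \simeq \prod_{\sigma} k_s$, the product running over the set $\Hom_k(F, k_s)$ of $k$-embeddings, with $\Gamma_k$ permuting the factors through its action on embeddings. Fixing one embedding identifies this $\Gamma_k$-set with $\Gamma_k/\Gamma_F$, where $\Gamma_F \subset \Gamma_k$ is the corresponding open subgroup. Evaluating $G$ factor by factor then yields a natural isomorphism of $\Gamma_k$-groups
\[
(\Weil{F/k}G)(k_s) \simeq \prod_{\Gamma_k/\Gamma_F} G(k_s) = \operatorname{Coind}_{\Gamma_F}^{\Gamma_k} G(k_s),
\]
the coinduced module, where $G(k_s)$ carries its $\Gamma_F$-action. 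I would check that the $\Gamma_k$-action read off from permuting the factors agrees with the standard coinduction action, and that the identification is functorial in $G$.

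Finally I would invoke Shapiro's lemma: for any continuous coefficient group $M$ with $\Gamma_F$-action there is a natural isomorphism $H^i(\Gamma_k, \operatorname{Coind}_{\Gamma_F}^{\Gamma_k} M) \simeq H^i(\Gamma_F, M)$, compatible with the group (respectively pointed-set) structure. Composing this with the identification above and with the definition $H^i(F, G) = H^i(\Gamma_F, G(k_s))$ gives the claimed isomorphism.

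The main obstacle I anticipate is the nonabelian case $i = 1$ with $G$ nonabelian, where $H^1$ is only a pointed set and the usual derived-functor proof of Shapiro's lemma is unavailable. Here I would instead use the explicit cochain description: a $1$-cocycle on $\Gamma_F$ valued in $G(k_s)$ corresponds, via the coinduction adjunction and evaluation at the base embedding, to a $1$-cocycle on $\Gamma_k$ valued in $\operatorname{Coind}_{\Gamma_F}^{\Gamma_k}G(k_s)$, and I would verify that this correspondence descends to a well-defined bijection on cohomology classes preserving base points. Care is also needed to ensure that all actions are continuous for the profinite topology; this follows since $\Gamma_F$ is open of finite index in $\Gamma_k$ and $G$ is of finite type.
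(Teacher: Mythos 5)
Your proposal is correct and takes essentially the same route as the paper: the paper's entire proof is a citation to Lemma~29.6 of \cite{KnuMerRos98The-book}, and your argument --- splitting $E$ into its field factors, identifying $(\Weil{F/k}G)(k_s)$ with the coinduced $\Gamma_k$-group $\operatorname{Coind}_{\Gamma_F}^{\Gamma_k}G(k_s)$, and invoking Shapiro's lemma with an explicit cocycle correspondence for the nonabelian case $i=1$ --- is precisely the standard proof of that cited result. No gaps: the points you flag to check (that the permutation action on the factors matches the coinduction action, and that the cocycle correspondence descends to cohomology classes) are exactly the verifications carried out in the reference.
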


\begin{proof}
See Lemma~29.6 of~\cite{KnuMerRos98The-book}.
\end{proof}

\begin{prop}[Hilbert 90] \label{prop:Hilbert90}
For any separable $k$-algebra $A$, the cohomology set
$H^1(k,\GL_1(A))$ is trivial.
\end{prop}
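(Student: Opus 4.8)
The plan is to reduce the statement to the classical generalized Hilbert~90 for a single central simple algebra over a field, and then to prove that case by interpreting the cohomology set as classifying forms of a module.

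First I would exploit the product decomposition $A = A_1 \times \cdots \times A_r$. Since units of a product are products of units, for every commutative $k$-algebra $R$ we have $(A \otimes_k R)^\times = \prod_i (A_i \otimes_k R)^\times$, so $\GL_1(A) \simeq \prod_i \GL_1(A_i)$ as $k$-group schemes. As $H^1(k,-)$ sends finite products to products of pointed sets, it suffices to show $H^1(k, \GL_1(A_i)) = 1$ for each $i$. Now each $A_i$ is central simple over a finite separable extension $F_i/k$, and the identity $A_i \otimes_k R = A_i \otimes_{F_i}(F_i \otimes_k R)$ identifies the $k$-group $\GL_1(A_i)$ with the Weil restriction $\Weil{F_i/k}\GL_1(A_i)$ of the corresponding $F_i$-group. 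By Lemma~\ref{lem:Shapiro} this gives $H^1(k,\GL_1(A_i)) \simeq H^1(F_i,\GL_1(A_i))$, reducing everything to the following: if $A$ is central simple over a field $F$, then $H^1(F,\GL_1(A)) = 1$.

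For the central simple case I would use the module-theoretic description of torsors. Viewing $A$ as a right module $A_A$ over itself, every $A$-linear endomorphism is left multiplication by an element of $A$, so $\Aut_A(A_A) = A^\times = \GL_1(A)$; hence $H^1(F,\GL_1(A))$ classifies the $F$-forms of $A_A$, that is, finitely generated right $A$-modules $P$ with $\overline{P} \simeq \overline{A}$ as $\overline{A}$-modules. In particular any such $P$ satisfies $\dim_F P = \dim_F A$. I would then invoke Artin--Wedderburn to write $A \simeq M_n(D)$ for a central division $F$-algebra $D$: this is a simple Artinian ring with a unique simple right module $S$, every finitely generated right $A$-module is isomorphic to $S^m$ for a unique $m$, and $A_A \simeq S^n$. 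Since $m$ is determined by the $F$-dimension, the equality $\dim_F P = \dim_F A$ forces $P \simeq S^n \simeq A_A$. Thus every form is trivial and $H^1(F,\GL_1(A)) = 1$.

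The reduction in the first step is essentially bookkeeping; the real content is the central simple case, and within it the main point to get right is the passage from a cohomology class to an honest right $A$-module over $F$ (via Galois descent, the twist of $A_A$ by a cocycle), together with the fact that ``being a form'' is already pinned down by the $F$-dimension once one knows the module structure over a simple Artinian algebra. An alternative to the dimension count, if one prefers, is to pass through the Morita equivalence between $A$ and $D$, under which forms of $A_A$ correspond to forms of the right $D$-vector space $D^n$; since finite-dimensional $D$-vector spaces of equal dimension are isomorphic, uniqueness of the form is immediate.
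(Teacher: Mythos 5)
Your proposal is correct, but it is not comparable to the paper's proof in the usual sense, because the paper does not give an argument at all: it simply cites Theorem~29.2 of \cite{KnuMerRos98The-book}, where this generalized Hilbert~90 is proved. What you have written is essentially a self-contained derivation of that cited result, and it hangs together: the identification $\GL_1(A)\simeq\prod_i\GL_1(A_i)$, the recognition of each $\GL_1(A_i)$ as a Weil restriction $\Weil{F_i/k}\GL_1(A_i)$ (the same identification the paper itself uses for \'etale algebras), and the application of Lemma~\ref{lem:Shapiro} reduce everything to the central simple case over a field; there, the descent interpretation of $H^1(F,\GL_1(A))$ as forms of the right module $A_A$, followed by Artin--Wedderburn and the dimension count (or Morita equivalence), correctly forces every form to be trivial. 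One structural point in your argument deserves emphasis, since it is where a careless version would fail: you reduce to the \emph{simple} case before invoking the dimension count. Over a genuinely semisimple (non-simple) algebra, two modules of equal $F$-dimension need not be isomorphic, so the counting step is only valid after the reduction; your ordering of the steps handles this correctly. What the paper's citation buys is brevity and an authoritative reference; what your version buys is transparency, and the pleasant feature that it uses only tools already displayed in the paper (the product structure of separable algebras, Weil restriction, and Lemma~\ref{lem:Shapiro}, which the paper in turn quotes from the same source).
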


\begin{proof}
See Theorem~29.2 of \cite{KnuMerRos98The-book}.
\end{proof}

\begin{prop} \label{prop:cGLisGLc}
For any separable $k$-algebra $A$ and for any cocycle
$c$ representing an element in $H^1(k,\Aut(A))$,
we have
\[ {}_c\GL_1(A) = \GL_1({}_cA) \]
where ${}_cA$ (resp. ${}_c\GL_1(A)$) denotes the form
of $A$ (resp. $\GL_1(A)$) twisted by the cocycle $c$.
\end{prop}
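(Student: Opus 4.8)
The plan is to exhibit both ${}_c\GL_1(A)$ and $\GL_1({}_cA)$ as the result of Galois descent applied to one and the same algebraic group $\GL_1(\overline{A})$ equipped with one and the same twisted semilinear action, so that they coincide as $k$-groups (not merely up to isomorphism). Conceptually this is the standard fact that a functor commuting with base field extension automatically commutes with twisting by a cocycle; see \S 5.3 of \cite{Ser02Galois}. The whole argument is formal once the functoriality of $\GL_1$ is set up correctly.

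First I would record that $\GL_1$ is a functor from $k$-algebras to algebraic groups: an algebra homomorphism $A \to A'$ induces a natural transformation $(A \otimes_k R)^\times \to (A' \otimes_k R)^\times$ on every commutative $k$-algebra $R$, hence a morphism $\GL_1(A) \to \GL_1(A')$. In particular each $\phi \in \Aut(A)(k_s) = \Aut(\overline{A})$ yields a group automorphism $\GL_1(\phi)$ of $\GL_1(\overline{A}) = \GL_1(A)_{k_s}$, and this assignment is a homomorphism of group schemes $\Aut(A) \to \Aut(\GL_1(A))$. Because it is a group homomorphism it carries cocycles to cocycles; this is the map through which the given $\Aut(A)$-cocycle $c = (c_\sigma)_{\sigma \in \Gamma_k}$ is regarded as an $\Aut(\GL_1(A))$-cocycle, so that the symbol ${}_c\GL_1(A)$ denotes the twist by $(\GL_1(c_\sigma))$.

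Next I would unwind both sides as descent data on $\GL_1(\overline{A})$. By definition ${}_cA$ is the $k$-algebra whose base change to $k_s$ is $\overline{A}$ equipped with the twisted semilinear action $\sigma \star x = c_\sigma(\sigma \cdot x)$, where $\sigma \cdot$ is the standard Galois action on $\overline{A}$, and ${}_cA$ is recovered as the $\star$-invariants. Since $\GL_1$ commutes with base change we have $\GL_1({}_cA)_{k_s} = \GL_1(\overline{A})$, and its descent datum is obtained by applying $\GL_1$ to the semilinear maps $x \mapsto c_\sigma(\sigma \cdot x)$. Functoriality gives $\GL_1\bigl(c_\sigma(\sigma \cdot -)\bigr) = \GL_1(c_\sigma) \circ \GL_1(\sigma \cdot -)$, and $\GL_1(\sigma \cdot -)$ is precisely the standard semilinear Galois action on $\GL_1(\overline{A})$. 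On the other hand, ${}_c\GL_1(A)$ is by construction $\GL_1(\overline{A})$ with the semilinear action $\sigma \star g = \GL_1(c_\sigma)(\sigma \cdot g)$, using the same twisting convention. The two descent data are literally identical, so the two $k$-forms agree.

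The only real point to verify, and where I expect the mild friction to lie, is that $\GL_1$ genuinely intertwines the semilinear structures: that applying the functor $\GL_1$ to the canonical $k_s/k$-semilinear automorphism of $\overline{A}$ produces the canonical semilinear automorphism of $\GL_1(\overline{A})$, and that it sends the composite of an algebra automorphism with a semilinear one to the corresponding composite on the group. This is a formal consequence of $\GL_1$ being defined on the functor of points over all $k$-algebras $R$, so that it respects arbitrary base change and not only extension of the base field, together with the naturality of its construction; granting this compatibility, the equality of descent data is immediate. I would also note that, since $A$ is separable, $\Aut(A)$ is an algebraic group and $\GL_1(A)$ is smooth, so all the twists above are taken in the category of algebraic groups and the Galois-descent arguments apply without scheme-theoretic subtleties.
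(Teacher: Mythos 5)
Your proposal is correct and is essentially the paper's own argument, only written out in full: the paper's proof is the single observation that the embedding $\GL_1(A)(k_s) \to A_{k_s}$ is $\Aut(A_{k_s})(k_s)$-equivariant, which is exactly the naturality/equivariance statement that your descent-data bookkeeping makes explicit. The extra care you take with semilinear maps and base change is the right way to justify that one line, but it is not a different route.
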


\begin{proof}
The embedding $\GL_1(A)(k_s) \to A_{k_s}$
is $\Aut(A_{k_s})(k_s)$-equivariant. 
\end{proof}

\subsection{Permutation lattices and quasi-split tori}

A $\Gamma$-lattice $L$ is \emph{permutation} if $L$ has a basis which is
permuted by $\Gamma$.  A torus $S$ is \emph{quasi-split} if
$\widehat{S}$ is permutation.

There is an antiequivalence of categories
between the category of \'etale algebras and the category of
$\Gamma_k$-sets
where $\Gamma_k$-orbits correspond to the subfields $F_i$ of $E$.

Given an \'etale $k$-algebra $E$, the group $\GL_1(E)$ is a torus.
Indeed, \[ T = \GL_1(E) \simeq \Weil{E/k}\bbG_{m,E} \]
is a Weil restriction.
The character lattice $\widehat{T}$ is a permutation $\Gamma_k$-lattice
with a basis indexed by the $\Gamma_k$-set corresponding to $E$.
The torus $T$ is always a quasi-split torus and any quasi-split torus can
arise in this way.
Note, however, that non-isomorphic \'etale algebras may give rise to
isomorphic tori since the choice of basis is not canonical.

\section{Preliminaries on General Toric Varieties}
\label{sec:prelimToric}

There are several reasonable notions of a ``toric
variety'' over a general field.
Here we fix the definitions for the remainder of the paper.

\begin{defn} \label{def:toricWithT}
Let $T$ be a torus.  A \emph{toric $T$-variety} $X$ is a normal $k$-variety
with a faithful $T$-action
and a dense open $T$-orbit $X_0$.
A toric $T$-variety $X$ is \emph{neutral} if there exists a $T$-equivariant
isomorphism $T \to X_0$.
A toric $T$-variety $X$ is \emph{split} if $T$ is a split torus.
\end{defn}

Over an algebraically closed field, the notions of neutral and
split are vacuous.
It is often desirable to keep track of a specific
isomorphism $T \to X_0$ when $X$ is neutral, but this is not part of our
definition.
Note that there are no non-trivial torsors under a split torus, so a
split toric $T$-variety is always neutral.

\begin{defn} \label{def:toricWithoutT}
We say $X$ is a \emph{toric variety} if there exists a torus $T$ with an
action on $X$ giving $X$ the structure of a toric $T$-variety.  We say
$X$ is \emph{neutral} if one can choose $T$ such that $X$ is neutral as
a toric $T$-variety.  We say $X$ is \emph{split} if one can choose $T$
such that $X$ is split as a toric $T$-variety.
\end{defn}

The difference between Definitions \ref{def:toricWithT} and
\ref{def:toricWithoutT} is whether a specific torus is
fixed a priori or not.
Note that a toric variety $X$ is neutral for any choice of torus if it
is neutral for any one choice.
However, beware that a split toric variety may also be a toric $T$-variety
for a different torus $T$ that is \emph{not} split.

We introduce 3 different categories of toric varieties to emphasize the
different kind of morphisms one might consider in light of the above
considerations.  In Section~\ref{sec:twists}, these categories 
will be used as natural settings for the machinery of descent.

\begin{defn}
The category $\calR$:
\begin{enumerate}
\item objects are toric varieties,
\item morphisms are morphisms of varieties.
\end{enumerate}
\end{defn}

\begin{defn}
The category $\calN$:
\begin{enumerate}
\item objects are pairs $(T,X)$
where $T$ is a torus and $X$ is a toric $T$-variety,
\item morphisms from $(T,X)$ to $(T',X')$ are pairs $(g,f)$
where $g : T \to T'$ is a group homomorphism
and $f : X \to X'$ is a morphism of varieties
which is $T$-equivariant via $g$.
\end{enumerate}
\end{defn}

The automorphisms in the category $\calN$ amount to
automorphisms of the subvariety $X_0$ which extend to all of $X$.

\begin{defn}
The category $\calW$:
\begin{enumerate}
\item objects are triples $(T,X,\iota)$
where $T$ is a torus, $X$ is a neutral toric $T$-variety,
and $\iota : T \hookrightarrow X$ is an isomorphism with the dense open orbit,
\item morphisms from $(T,X,\iota)$ to $(T',X',\iota')$ are pairs $(g,f)$
where $g : T \to T'$ is a group homomorphism
and $f : X \to X'$ is a morphism of varieties such that the diagram
\[
\xymatrix{
X \ar[r]^f &
X' \\
T \ar[r]^g \ar@{^{(}->}[u]^{\iota} &
T' \ar@{^{(}->}[u]^{\iota'}
}
\]
commutes.
\end{enumerate}
\end{defn}

The morphisms of $\calW$ are called \emph{toric morphisms}
in the literature.
Observe that multiplication by a non-trivial element of the torus is
an automorphism in $\calN$, but not in $\calW$.

We mention two useful results regarding general toric varieties.

\begin{prop} \label{prop:compactification}
If $T$ is a torus then there exists a smooth projective $T$-variety $X$.
\end{prop}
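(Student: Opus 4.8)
The plan is to realize $X$ as a smooth projective toric variety attached to a fan and then descend it along the Galois action. Write $N = \Hom(\widehat{T},\bbZ)$ for the cocharacter lattice of $T$ and let $\Delta$ denote the decomposition group of $T$, the finite image of $\Gamma_k \to \Aut(N)$. Over $k_s$ the torus $T$ becomes split with cocharacter lattice $N$, and a fan $\Sigma$ in $N_\bbR := N \otimes_\bbZ \bbR$ determines a split toric variety $X_\Sigma$ over $k_s$ whose dense orbit is $\overline{T}$. If $\Sigma$ is $\Delta$-invariant, then the $\Gamma_k$-action on $N$ induces (via the finite quotient $\Delta$) a semilinear $\Gamma_k$-action on $X_\Sigma$ that is compatible with the descent datum defining $\overline{T}$. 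Since $X_\Sigma$ is quasiprojective, this action is effective, and Galois descent (as in the twisting construction of \S 5.3 of \cite{Ser02Galois}) produces a $k$-variety $X$ carrying a $T$-action with dense open orbit $T$, that is, a neutral toric $T$-variety. As smoothness and projectivity descend from $k_s$, it suffices to produce a $\Delta$-invariant fan $\Sigma$ that is smooth, complete, and projective.

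To obtain a $\Delta$-invariant complete projective fan, I would start from any full-dimensional lattice polytope $Q \subset N_\bbR$ with the origin in its interior (for instance the convex hull of $\pm$ a lattice basis of $N$) and form the Minkowski sum $P = \sum_{\delta \in \Delta} \delta Q$. Then $P$ is again a full-dimensional lattice polytope and is $\Delta$-invariant by construction. Its normal fan $\Sigma_0$ is complete because $P$ is full-dimensional, is projective because it is the normal fan of a polytope, and is $\Delta$-invariant because $P$ is. In general $\Sigma_0$ will fail to be smooth.

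The remaining, and main, difficulty is to refine $\Sigma_0$ to a fan $\Sigma$ that is simultaneously smooth, projective, \emph{and} $\Delta$-invariant. Toric resolution of singularities refines $\Sigma_0$ to a smooth fan through a finite sequence of stellar subdivisions at primitive lattice generators; performing each subdivision simultaneously along every cone in a given $\Delta$-orbit keeps the fan $\Delta$-invariant at each stage. To retain projectivity one subdivides using a $\Delta$-invariant strictly convex piecewise-linear function refining the one defining $\Sigma_0$, or equivalently checks that projective stellar subdivisions of a projective fan remain projective. The crux of the argument is arranging smoothness and projectivity of the refinement at the same time while respecting the finite group $\Delta$; this compatibility is the only substantive point, the rest being formal.

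Granting such a $\Sigma$, the variety $X_\Sigma$ over $k_s$ is smooth and projective, and descends by the first paragraph to a smooth projective toric $T$-variety $X$ over $k$, which proves the proposition.
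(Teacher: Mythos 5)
Your overall strategy --- produce a $\Delta$-invariant smooth projective fan and descend it along the Galois action, which factors through the finite decomposition group $\Delta$ --- is exactly the strategy of the result the paper invokes: the paper's own ``proof'' of this proposition is a single citation to \cite{ColHarSko05Compactification}, which is an exposition of a theorem of Brylinski and K\"unnemann. The descent half of your argument is fine: a $\Delta$-invariant fan yields a semilinear $\Gamma_k$-action on $X_\Sigma$ compatible with the descent datum defining $\overline{T}$, descent is effective for quasiprojective varieties, and smoothness and projectivity descend.

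But there is a genuine gap, and you have located it yourself: you write ``Granting such a $\Sigma$'' at precisely the point where the entire mathematical content of the proposition lies. The existence of a fan that is simultaneously smooth, projective, and $\Delta$-invariant is not a formal corollary of toric resolution of singularities; it is the theorem of Brylinski (invariant regular subdivisions exist), refined by K\"unnemann (they can be taken projective), whose proof is the substance of \cite{ColHarSko05Compactification}. Concretely, your sketch elides the following: (i) when a cone $\sigma$ is stabilized by a nontrivial subgroup of $\Delta$ that acts nontrivially on $\sigma$, a stellar subdivision must be centered at a point invariant under that stabilizer, and the usual multiplicity-reduction argument for termination of toric desingularization must be redone under this constraint --- one cannot simply subdivide ``at primitive lattice generators'' chosen freely; (ii) simultaneous stellar subdivision along a $\Delta$-orbit of centers requires checking well-definedness and order-independence when cones in the same orbit share faces; (iii) exhibiting, at every step of this modified algorithm, a strictly convex $\Delta$-invariant piecewise-linear support function is exactly the extra work K\"unnemann had to add to Brylinski's argument, not a routine verification. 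As written, your proof assumes the statement whose proof is nontrivial; a correct write-up must either carry out this equivariant resolution in detail or cite it, as the paper does.
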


\begin{proof}
See \cite{ColHarSko05Compactification}.
\end{proof}

\begin{prop} \label{prop:rationalPoint}
A smooth projective toric variety $X$ is neutral if and only if $X$ has a
rational $k$-point.
\end{prop}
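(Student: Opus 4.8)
The plan is to reformulate neutrality as the triviality of the torsor given by the open orbit and then propagate a rational point from an arbitrary orbit to the open one. Fix a torus $T$ making $X$ a toric $T$-variety, and let $X_0$ be the open orbit. Since $X_0$ is a principal homogeneous space under $T$, it is a trivial $T$-torsor if and only if $X_0(k) \neq \emptyset$, and this triviality is precisely the statement that $X$ is neutral. The forward implication is immediate: if $X$ is neutral then $X_0 \simeq T$ as a $T$-variety, so the identity of $T$ furnishes a point of $X_0(k) \subseteq X(k)$. For the converse I would start from a point $x \in X(k)$ and aim to produce a point of $X_0(k)$.

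First I would locate $x$. Over $k_s$ the point lies in a unique $\overline{T}$-orbit $O_\sigma$, indexed by a cone $\sigma$ of the fan of $X$ under the standard orbit--cone correspondence \cite{Ful93Introduction}. Because $x$ is $k$-rational it is fixed by $\Gamma_k$, and since $T$ is defined over $k$ the Galois action permutes the orbits compatibly with its action on the fan; hence $O_\sigma$, and therefore $\sigma$ itself, is $\Gamma_k$-stable. Consequently the affine chart $U_\sigma = \bigcup_{\tau \le \sigma} O_\tau$ is $\Gamma_k$-stable and descends to a $T$-variety over $k$, inside which $O_\sigma$ is the unique closed orbit and $x \in O_\sigma(k)$. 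Let $N_\sigma = (\bbR\sigma) \cap N$ be the saturated sublattice spanned by $\sigma$; it is $\Gamma_k$-stable, so it is the cocharacter lattice of a subtorus $T_\sigma \subseteq T$ defined over $k$, namely the common stabilizer of the points of $O_\sigma$ (recall $T$ is abelian). Writing $T_O = T/T_\sigma$, the closed orbit $O_\sigma$ is a torsor under $T_O$.

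The crux is that $T_\sigma$ is quasi-split. Here smoothness of $X$ is essential: the cone $\sigma$ is smooth, so its primitive ray generators form part of a $\bbZ$-basis of $N$ and in particular a basis of $N_\sigma$. Since $\Gamma_k$ preserves $\sigma$ it permutes these rays, so $N_\sigma$ is a permutation $\Gamma_k$-lattice and $T_\sigma$ is quasi-split. By Lemma~\ref{lem:Shapiro} together with Proposition~\ref{prop:Hilbert90} this gives $H^1(k, T_\sigma) = 0$. The exact sequence $1 \to T_\sigma \to T \to T_O \to 1$ then yields an injection $H^1(k,T) \hookrightarrow H^1(k,T_O)$. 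Finally, the inclusion $\sigma^\perp \cap M \hookrightarrow M$ induces a $\Gamma_k$-equivariant, $T$-equivariant retraction $U_\sigma \to O_\sigma$, whose restriction $X_0 \to O_\sigma$ exhibits $O_\sigma$ as the contracted product $X_0 \times^{T} T_O$; hence the class of $X_0$ in $H^1(k,T)$ maps to the class of $O_\sigma$ in $H^1(k,T_O)$. Since $x \in O_\sigma(k)$, the latter class is trivial, so by injectivity $[X_0] = 0$, i.e. $X_0(k) \neq \emptyset$ and $X$ is neutral.

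I expect the main obstacle to be verifying that the stabilizer torus $T_\sigma$ is quasi-split and descends correctly: one must know that the orbit--cone correspondence is $\Gamma_k$-equivariant for a form (so that $\sigma$, $U_\sigma$, and $T_\sigma$ are defined over $k$) and must use smoothness in an essential way to identify $N_\sigma$ as a permutation lattice. Without smoothness the rays of $\sigma$ need not form a basis and $T_\sigma$ may fail to be quasi-split, so this step is exactly where the hypothesis enters; the remaining bookkeeping --- the identification of the fibre over $x$ as a $T_\sigma$-torsor inside $X_0$, equivalently the matching of $[X_0]$ with $[O_\sigma]$ --- is then routine.
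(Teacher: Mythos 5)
Your proposal cannot be compared line-by-line with the paper's own proof, because the paper does not actually prove this proposition: it simply cites Proposition~4 of~\cite{VosKly84Toric}. What you have written is a correct, self-contained argument, and it is in the spirit of that classical proof of Voskresenski\u\i--Klyachko. The chain of reductions is sound: the open orbit $X_0$ is a $T$-torsor (faithfulness plus commutativity of $T$ force trivial stabilizers), so neutrality is equivalent to $X_0(k) \neq \emptyset$; a rational point forces its orbit, hence its cone $\sigma$, to be $\Gamma_k$-stable; smoothness makes $\sigma$ a regular cone, so the primitive ray generators form a $\Gamma_k$-permuted basis of $N_\sigma$, whence $T_\sigma$ is quasi-split and $H^1(k,T_\sigma)=0$ by Lemma~\ref{lem:Shapiro} and Proposition~\ref{prop:Hilbert90}; the resulting injection $H^1(k,T) \hookrightarrow H^1(k,T_O)$ then kills $[X_0]$ because $O_\sigma$ contains the rational point $x$. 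Two details deserve slightly more care than you give them, though both are standard: (i) the retraction $U_\sigma \to O_\sigma$ must be shown to be defined over $k$; this follows either from its uniqueness as a $T$-equivariant retraction restricting to the identity on $O_\sigma$, or from the observation that $\Gamma_k$ permutes the $\overline{T}$-weight lines of $k_s[\overline{U}_\sigma]$ compatibly with its action on $M$, so the monoid-algebra inclusion corresponding to $\sigma^\perp \cap M \subseteq \sigma^\vee \cap M$ is Galois-equivariant; (ii) one should say explicitly that a $T$-equivariant map from the torsor $X_0$ to the $T_O$-torsor $O_\sigma$ automatically induces an isomorphism $X_0 \times^T T_O \simeq O_\sigma$, which is what identifies $[O_\sigma]$ with the image of $[X_0]$. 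Finally, note that your argument never uses projectivity or even properness --- smoothness alone suffices --- so you have in fact proved a slightly more general statement than the one asserted, which is consistent with the completeness hypothesis in the cited source being used only elsewhere in that paper's setting.
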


\begin{proof}
See Proposition~4~of~\cite{VosKly84Toric}.
\end{proof}

\section{Structure of Split Toric Varieties}
\label{sec:splitToric}

Throughout this section, $X$ is a proper split toric $T$-variety with a
specified embedding $T \hookrightarrow X$.

\begin{hypothesis} \label{assume}
Throughout the paper we will make the following additional technical
assumptions:
\begin{itemize}
\item the automorphisms functor $\Aut(X)$ is a linear algebraic group,
and
\item the order of the torsion subgroup of the class group $\Cl(X)$
is relatively prime to the characteristic of $k$.
\end{itemize}
We shall see that both these assumptions hold when $X$ is smooth
or $X$ has characteristic $0$.
\end{hypothesis}

Here we outline the well-known structure theory of a split toric variety
to fix notation.
We assume the reader is familiar with standard references on toric
varieties (for example, \cite{Ful93Introduction} or \cite{CoxLitSch11Toric}).
Many references only consider the base field $\bbC$, but much
of the theory goes through unchanged in the split case.

Let $M=\widehat{T}$ be the character lattice of $T$,
and let $N$ be the cocharacter lattice of $T$.
Of course, since $T$ is split, both lattices have trivial
$\Gamma_k$-action.

Split toric $T$-varieties $X$ are in bijective correspondence with
\emph{fans} in their cocharacter lattices $N$.  From the data of a fan
$\Sigma$,
one can determine whether $X$ is smooth, projective, or proper over $k$.
We denote by $\Sigma(k)$ the set of cones of dimension $k$ in $\Sigma$;
in particular, $\Sigma(1)$ is the set of rays.
We denote by $\cox{M}$ the free abelian group with basis indexed by
$\Sigma(1)$.  The dual lattice $\cox{N}$ is canonically isomorphic to
$\cox{M}$ by using this basis.
The lattice $\cox{M}$ is isomorphic to
the group of $T$-invariant Weil divisors of $X$.

For a split toric variety, we have canonical isomorphisms
$\Cl(X) = \Cl(\overline{X})$ and $\Pic(X) = \Pic(\overline{X})$
(this follows from their descriptions via torus-invariant divisors as in,
e.g., Theorem~4.1.3~of~\cite{CoxLitSch11Toric}).
The divisor class group, $\Cl(\overline{X})$, has a natural
structure as a (trivial) $\Gamma_k$-module.
Let $S$ be the group of multiplicative type $D(\Cl(\overline{X}))$.
Note that by our assumption above, $S$ is a linear algebraic group.

There is an exact sequence
\begin{equation} \label{eq:TTS}
1 \to M \to \cox{M} \to \Cl(\overline{X}) \to 1
\end{equation}
when $k_s[\overline{X}]^\times \simeq k_s$.
When $X$ is smooth, $\Cl(\overline{X})$ is canonically isomorphic to
$\Pic(\overline{X})$ and is torsion free.

\subsection{Cox rings}

We now review the theory of Cox rings (see \cite{Cox95The-homogeneous}).
For non-split toric varieties this can be subtle
(see Section \ref{sec:twistedProps} and \cite{DerPie14Cox-rings}),
but here we assume $X$ is split and proper.

The \emph{Cox ring} $\Cox(X)$ of $X$ is the polynomial ring
\[ \Cox(X) := k[x_1, \ldots, x_r] \]
where the monomials $x_1, \ldots, x_r$ correspond to the rays
$\rho_1, \ldots, \rho_r$ in $\Sigma(1)$.
Note that $\Cox(X)$ has a canonical embedding into
$k[\cox{T}]$ where monomials can be identified with elements of $\cox{M}$.
The ring $\Cox(X)$ has a natural $\Cl(\overline{X})$ grading via the morphism
$\cox{M} \to \Cl(\overline{X})$ from \eqref{eq:TTS}.

For a Weil divisor $D$, we denote the graded component of $\Cox(X)$
corresponding to $[D] \in \Cl(\overline{X})$ by $\Cox(X)_{[D]}$
or $\Cox(X)_D$.
Denoting $\calO_X(D)$ as the reflexive sheaf associated to $D$,
there are isomorphisms
\[ \Cox(X)_{D} \simeq H^0(X,\calO_X(D))
= \{ f \in k(X)^\times \colon \Div(f) + D \ge 0 \} \cup \{0\} \]
for every Weil divisor $D$.

We define the \emph{irrelevant ideal} $B$ of $\Cox(X)$ as the monomial
ideal generated by products $x_{i_1}\cdots x_{i_r}$ corresponding to
subsets of rays $\{ \rho_{i_1}, \ldots, \rho_{i_r} \}$ which are the
complement of a cone in $\Sigma$.
Note that $V = \Spec(\Cox(X))$ is an affine variety with a natural
vector space structure.
The ideal $B$ cuts out a closed subvariety $Z \subset V$ whose
complement $\cox{X}$ we call the \emph{characteristic space} of $X$.

Since $S$ is the Cartier dual of $\Cl(\overline{X})$,
the $\Cl(\overline{X})$-grading on $\Cox(X)$ corresponds to
generically-free actions of $S$ on $V$ and on $\cox{X}$
(obtained via restrictions of the action of the torus $\cox{T}$).
We may recover $X$ as the categorical quotient of $\cox{X}$ by $S$;
over $\bbC$ this is Theorem~2.1~of~\cite{Cox95The-homogeneous},
but the proof works in general with minor modifications.
In the case that $X$ is smooth, $S$ is a torus and, thus, the
action of $S$ on $\cox{X}$ is free and the quotient
\[ \psi : \cox{X} \to X \]
is an $S$-torsor (in fact, a \emph{universal torsor} in the sense of
\cite{ColSan87La-descente}).

\subsection{Automorphisms}

The automorphism group scheme of a smooth proper split toric variety was
determined in~\cite{Dem70Sous-groupes}.
Our exposition is heavily inspired by
\cite{Cox95The-homogeneous} where the automorphism group of a split
proper simplicial toric variety over $\bbC$ is determined indirectly via the Cox ring
(see also~\cite{Cox14Erratum}).
The simplicial hypothesis is removed in \cite{Buh96Homogener}.
The automorphism group of a projective split toric variety is
determined in \cite{BruGub99Polytopal} over an arbitrary field.

Recall that we assume that $\Aut(X)$ is a linear algebraic group.
The author knows no counterexample to this assumption for an arbitrary
proper toric variety and it holds many cases of interest:

\begin{lem} \label{lem:AutXsmooth}
Let $X$ be a proper split toric variety and
suppose that either
\begin{enumerate}
\item $X$ is smooth, or
\item $k$ has characteristic $0$ and $X$ is projective.
\end{enumerate}
Then the functor $\Aut(X)$ of automorphisms of $X$ is a linear
algebraic group.
\end{lem}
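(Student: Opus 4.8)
The plan is to establish that $\Aut(X)$ is representable by a linear algebraic group in each of the two cases by exhibiting $X$ as a suitable closed or locally closed subscheme of a projective space on which $\Aut(X)$ acts through a linear group, and then descending from the known structure over the algebraic closure. Since $X$ is split, $\overline{X} = X_{k_s}$ is a split toric variety over $k_s$, and the relevant results of Demazure, Cox, B\"uhler, and Bruns--Gubeladze cited above describe $\Aut(\overline{X})$ as a linear algebraic group over $k_s$. Because representability of a functor and the property of being a linear algebraic group can both be checked after faithfully flat base change, it suffices to produce a linear algebraic group $G$ over $k$ together with an isomorphism $G_{k_s} \simeq \Aut(\overline{X})$ compatible with the action on $\overline{X}$; the functor $\Aut(X)$ is automatically a $k$-form of $\Aut(\overline{X})$ once we know it is representable, so the crux is representability together with linearity over $k$ itself.

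For case (2), where $k$ has characteristic $0$ and $X$ is projective, I would use the classical fact that for a projective variety $X$ with a fixed very ample line bundle $L$, the group $\Aut(X)$ is a locally closed subscheme of the projective linear group $\PGL(H^0(X,L^{\otimes m}))$ for suitable $m$, provided one can arrange $L$ to be $\Aut(X)$-invariant. The standard approach is to replace $L$ by a canonically defined invariant line bundle --- for instance the anticanonical bundle $\omega_X^{-1}$ when $X$ is Fano, or more generally a sum over a Galois- and automorphism-stable orbit of ample classes, or the determinant of a natural bundle --- so that $\Aut(X)$ embeds into the linear group $\PGL$ of the corresponding complete linear system. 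In characteristic $0$ one has the full strength of the theory of automorphism schemes of projective varieties (Grothendieck, Matsumura--Oort), which gives that $\Aut(X)$ is a group scheme locally of finite type, and properness of $X$ together with the linear embedding forces it to be a linear algebraic group; smoothness in characteristic $0$ is automatic by Cartier's theorem.

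For case (1), where $X$ is smooth but possibly of positive characteristic and only proper, the anticanonical or linear-system argument is more delicate because $X$ need not be projective and positive-characteristic phenomena can intervene. Here I would instead lean on the explicit Cox-ring description recalled in this section: $X$ is the geometric quotient $\psi \colon \cox{X} \to X$ of the characteristic space by the torus $S$, and the earlier discussion (together with the diagram \eqref{eq:twoLines}) shows that automorphisms of $X$ lift to the group $\tAut(X)$ acting linearly on $V = \Spec(\Cox(X))$, which is a finite-dimensional vector space. Since $\tAut(X)$ sits inside $\GL(V)$ --- it is an algebraic subgroup of the general linear group of the graded ring in each degree --- and $\Aut(X)$ is the quotient $\tAut(X)/S$ by the diagonalizable group $S$, one obtains $\Aut(X)$ as a linear algebraic group: a quotient of a linear algebraic group by a normal diagonalizable subgroup is again linear algebraic, and smoothness over $k_s$ follows from the smoothness of $\Aut(\overline{X})$ in Demazure's description, with the characteristic hypothesis on $\Cl(X)$ guaranteeing that $S$ is smooth.

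The main obstacle I anticipate is the construction, in case (1), of the linear action on the Cox ring in a characteristic-free and torus-intrinsic way, that is, verifying that every automorphism of $X$ --- not merely those respecting the toric structure --- lifts to the characteristic space and acts linearly on $V$, so that $\Aut(X)$ genuinely arises as $\tAut(X)/S$ rather than merely containing the toric automorphisms. This requires knowing that $\tAut(X) \to \Aut(X)$ is surjective with kernel $S$, which is precisely the content of the structure theorem \eqref{eq:twoLines} (stated as Theorem~\ref{thm:twoLines}) that the present lemma is meant to support; I would therefore organize the argument so that the lemma draws on the linearity of $\tAut(X) \subset \GL(V)$ and the quotient presentation, reducing everything to the single fact that a quotient of a smooth linear algebraic group by a normal closed subgroup of multiplicative type is a smooth linear algebraic group.
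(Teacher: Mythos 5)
Your argument for case (2) is a legitimate classical alternative to what the paper does: the paper instead combines Matsumura--Oort representability, Cartier's theorem (smoothness of $\Aut(X)^\circ$ in characteristic $0$), and the Bruns--Gubeladze description of $\Aut(X)(K)$ to get finitely many components, whereas you embed $\Aut(X)$ into $\PGL(H^0(X,L^{\otimes m}))$ for an invariant ample $L$. Your route works, but note that it silently uses finiteness of the image of $\Aut(X)\to\Aut(\Pic(X))$ (to average an ample class over its orbit); this is true because that image preserves the strongly convex full-dimensional rational polyhedral nef cone, but it is an ingredient you must supply, and in the paper's logical order the finiteness of $J$ is developed \emph{after} this lemma.

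Case (1), however, has a genuine gap, and it is precisely the one you flag at the end: circularity. In this paper the presentation of $\Aut(X)$ as $\tAut(X)/S$ --- equivalently, the surjectivity of $\tAut(X)\to\Aut(X)$ with kernel $S$ --- is Theorem~\ref{thm:twoLines}, which is proved \emph{after} this lemma, is stated only for \emph{projective} $X$, and assumes $\Aut(X)$ is smooth, i.e.\ assumes the content of Hypothesis~\ref{assume} that the present lemma exists to justify. Your proposed fix (``draw only on the linearity of $\tAut(X)\subset\GL(V)$'') does not escape this: the linearity of $\tAut(X)$ is not the hard part; the hard part is that \emph{every} automorphism of $X$, not just the toric ones, lifts to the characteristic space, and that is exactly Cox's theorem, whose available proofs (Cox, B\"uhler, Bruns--Gubeladze) are over $\bbC$ or assume projectivity --- neither of which covers smooth \emph{proper} $X$ in positive characteristic. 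Worse, even granting the lifting on points, your argument only identifies $\Aut(X)$ with $\tAut(X)/S$ as abstract groups of $K$-points; in positive characteristic the functor $\Aut(X)$ could a priori be non-reduced, and a homomorphism $\tAut(X)/S\to\Aut(X)$ that is bijective on points need not be an isomorphism of group schemes (the paper's remark following Theorem~\ref{thm:twoLines} makes exactly this caveat). Ruling out such infinitesimal automorphisms is the real content of the smooth case, and this is why the paper simply cites Proposition~11 of \cite{Dem70Sous-groupes}: Demazure analyzes the automorphism functor directly (via $H^0(X,T_X)$ and the root decomposition), with no detour through the Cox ring, after first invoking \cite{MatOor67Representability} for representability. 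To repair your case (1) you should do the same: cite Demazure over $k_s$ (or directly over $k$, since $X$ is split) rather than the quotient presentation.
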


\begin{proof}
Note that since $X$ is proper, $\Aut(X)$ is a group scheme of locally
finite type over $k$ (see
Theorem~3.7~of~\cite{MatOor67Representability}).
When $X$ is smooth, $\Aut(X)$ is a linear algebraic group
by Proposition~11~of~\cite{Dem70Sous-groupes}.
In characteristic $0$, the neutral component $\Aut(X)^\circ$
must be smooth.
From \cite{BruGub99Polytopal}, we have a description of $\Aut(X)(K)$ for
any field $K$ when $X$ is projective.
From this description we see that $\Aut(X)$ has finitely
many connected components and thus $\Aut(X)$ is a linear algebraic
group.
\end{proof}

Let $W$ be the group of toric automorphisms of $X$
(the subgroup of $\GL(N) \simeq \GL_n(\bbZ)$ which takes cones to cones).
Note that $W$ has induced actions on $\cox{T}$ and $T$.

Let $V_\lambda$ be the weight subspace of $V$ corresponding to $\lambda$
in $\Cl(\overline{X})$ and let $n_\lambda$ be its dimension.
Let $\Lambda$ be the subset of $\Cl(\overline{X})$ corresponding to the
non-trivial weight subspaces $V_\lambda$ of $V$.

\begin{defn} \label{def:CoxAlg}
The \emph{Cox endomorphism algebra of $X$} is the split separable $k$-algebra
\[ A := \prod_{\lambda \in \Lambda} \End(V_\lambda) \ . \]
\end{defn}

We define $W^\circ = W \cap \GL_1(A)$ and note that
\[ W^\circ \simeq \prod_{\lambda \in \Lambda} S_{n_\lambda} \]
where each $S_{n_\lambda}$ is the symmetric group on $n_\lambda$
letters.
The group $W^\circ$ is isomorphic to the Weyl group of $\GL_1(A)$.

\begin{defn} \label{def:J}
The \emph{group of class group automorphisms of $X$}, denoted $J$,
is the image of the map $\Aut(X) \to \Aut(S)$
(recall that we obtain an induced action of $\Aut(X)$ on $S$ since it is
the dual of the class group $\Cl(X)$).
The group $J$ is a finite constant group which will be of fundamental
importance for the remainder of the paper.
\end{defn}

Let $\tAut(X)$ be the normalizer of $S$ in the automorphism group
functor of $\cox{X}$.  We will see that, in fact, $\tAut(X)$ is a linear
algebraic group.

\begin{thm} \label{thm:twoLines}
Let $X$ be a projective split toric variety
with $\Aut(X)$ smooth.
Diagram \eqref{eq:twoLines}:
\[
\xymatrix{
1 \ar@{->}[r] &
S \ar@{->}[r] \ar@{=}[d] &
\cox{T} \rtimes W \ar@{->}[r] \ar@{->}[d] &
T \rtimes W \ar@{->}[r] \ar@{->}[d] &
1 \\
1 \ar@{->}[r] &
S \ar@{->}[r] &
\tAut(X) \ar@{->}[r] &
\Aut(X) \ar@{->}[r] &
1 \\
}
\]
commutes and has exact rows.
Moreover,
$T \rtimes W$ (resp. $\cox{T} \rtimes W$)
is the normalizer of a maximal torus in $\Aut(X)$
(resp. $\tAut(X)$).
There is a unipotent subgroup $U$ such that
\[ \tAut(X) \simeq
U \rtimes \GL_1(A) \rtimes J , \]
and an isomorphism
\[
W \simeq W^\circ \rtimes J
\]
where the splitting is unique up to conjugacy.
\end{thm}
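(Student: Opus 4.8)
The plan is to realize every group in \eqref{eq:twoLines} inside the graded automorphism group of the Cox ring and then to read off the structure by linear algebra on the weight spaces $V_\lambda$, importing from \cite{Dem70Sous-groupes,Cox95The-homogeneous,Buh96Homogener,BruGub99Polytopal} the single geometric fact that every automorphism of $X$ lifts to the Cox ring. Concretely, I would identify $\tAut(X)$ with the group of graded $k$-algebra automorphisms of $\Cox(X)$, where ``graded'' means compatible with the $\Cl(\overline X)$-grading up to an automorphism of $\Cl(\overline X)$. Since the characteristic space map $\cox X \to X$ is the categorical quotient by $S = D(\Cl(\overline X))$, an automorphism of $\cox X$ normalizing $S$ is exactly such a graded automorphism and descends to an automorphism of $X$; the descent is surjective by the cited lifting theorem and its kernel is precisely $S$, the subgroup acting trivially on the quotient. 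This gives the exact bottom row $1 \to S \to \tAut(X) \to \Aut(X) \to 1$ and shows that $\tAut(X)$ is a linear algebraic group.

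Next I would dissect $\tAut(X)$. The grading induces a homomorphism $\tAut(X) \to \Aut(\Cl(\overline X))$ with image $J$ by Definition~\ref{def:J}, and its kernel is the subgroup of degree-preserving automorphisms. Each such $\phi$ carries $x_i$ into the graded piece $\Cox(X)_{[D_i]}$, so taking linear parts defines a homomorphism to $\prod_\lambda \GL(V_\lambda) = \GL_1(A)$. This map is surjective and split by the honestly linear, variable-mixing automorphisms, and its kernel $U$—the maps $x_i \mapsto x_i + (\text{higher monomials of degree } [D_i])$ generated by the Demazure roots of the fan—is unipotent and normal. Hence the degree-preserving part is $U \rtimes \GL_1(A)$; both factors being connected, this is the identity component and $J = \pi_0$. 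Choosing the variable-permuting lifts of the elements of $J$ coming from symmetries of the fan produces a homomorphic section, yielding $\tAut(X) \simeq U \rtimes \GL_1(A) \rtimes J$ with $U$ the unipotent radical and $\GL_1(A)$ a Levi subgroup.

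For the torus statements and the top row I would take $\cox T = D(\cox M) \simeq (\bbG_m)^r$ to be the diagonal maximal torus of $\GL_1(A)$. Its Weyl group in $\GL_1(A)$ is $W^\circ = \prod_\lambda S_{n_\lambda}$, and since $U$ is the unipotent radical while the chosen copy of $J$ permutes coordinates and so normalizes $\cox T$, the normalizer of $\cox T$ in $\tAut(X)$ is $\cox T \rtimes W^\circ \rtimes J$. Its induced action on $\cox M = \widehat{\cox T}$ is by ray permutations preserving the fan, which identifies this quotient group with the toric automorphism group $W$; this simultaneously gives $N_{\tAut(X)}(\cox T) = \cox T \rtimes W$ and the splitting $W \simeq W^\circ \rtimes J$ with $W^\circ = W \cap \GL_1(A) = \ker(W \to J)$. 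Dualizing \eqref{eq:TTS} yields the exact sequence $1 \to S \to \cox T \to T \to 1$ on which $W$ acts compatibly, and forming semidirect products produces the exact top row; the right square commutes because the verticals are the normalizer inclusions and the quotient by the central $S$ carries $N_{\tAut(X)}(\cox T)$ onto $N_{\Aut(X)}(T)$, the left square being the obvious identity on $S$. As $S \subset \GL_1(A)$ is central, $\Aut(X)^\circ = U \rtimes (\GL_1(A)/S)$ has the same Weyl group $W^\circ$, so $T = \cox T/S$ is a maximal torus of $\Aut(X)$ with normalizer $T \rtimes W$.

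The hard part will be twofold. The deepest input is the lifting of automorphisms of $X$ to graded automorphisms of $\Cox(X)$ for an arbitrary projective, possibly non-simplicial and possibly positive-characteristic, $X$; I would quote this from \cite{BruGub99Polytopal} rather than reprove it, and it is what forces the projectivity and smoothness hypotheses. Within the present argument the delicate point is the splitting $W \simeq W^\circ \rtimes J$ together with its uniqueness up to conjugacy: an abstract extension $1 \to W^\circ \to W \to J \to 1$ of finite groups need not split, so I must use the monomial realization of $J$ by fan symmetries to obtain an actual homomorphic section, and then deduce uniqueness from the conjugacy of Levi decompositions of $\tAut(X)$—any two sections of $J$ normalizing $\cox T$ differ by an element of $\cox T \rtimes W^\circ$ and are therefore conjugate under it.
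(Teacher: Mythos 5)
Your overall architecture is the same as the paper's: realize $\tAut(X)$ inside the graded automorphisms of the Cox ring via the lifting results of Cox, B\"uhler and Bruns--Gubeladze, split off the unipotent part $U$ and the linear part $\GL_1(A)$, and read the top row and the normalizer statements off the diagonal torus $\cox{T} \subset \GL_1(A)$. The genuine gap is at the step you yourself flag as delicate: the existence of a homomorphic section of $W \to J$. Your justification --- ``choosing the variable-permuting lifts of the elements of $J$ coming from symmetries of the fan produces a homomorphic section'' --- is circular. An element of $J$ is an automorphism of $\Cl(\overline{X})$, not a fan symmetry; it is induced by an entire coset $wW^\circ$ of fan symmetries, and choosing one representative from each coset so that the choices \emph{multiply} is precisely the content of the splitting claim. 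Your later remark that you ``must use the monomial realization of $J$ by fan symmetries'' restates the goal rather than achieving it. The paper supplies the missing idea: fix an ordering of the basis vectors $x_i^*$ inside each weight space $V_\lambda$; for any set-theoretic section $s$, the lift $s(j)$ carries $V_\lambda$ to $V_{j(\lambda)}$, and one corrects it by the unique element $w_j \in W^\circ \simeq \prod_\lambda S_{n_\lambda}$ for which $\tilde{s}(j) := w_j^{-1}\circ s(j)$ is order-preserving on each ordered basis. Since an order-preserving bijection between two ordered finite sets is unique, $\tilde{s}(j_1)\tilde{s}(j_2)$ and $\tilde{s}(j_1j_2)$ are both order-preserving lifts of $j_1j_2$, hence equal, so $\tilde{s}$ is a homomorphism.

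Your uniqueness argument is also unsound. That two sections of $J$ into $\cox{T}\rtimes W$ ``differ by an element of $\cox{T}\rtimes W^\circ$'' pointwise does not make them conjugate: the pointwise differences form a $1$-cocycle of $J$ with values in that kernel, and conjugacy is exactly the assertion that this cocycle is a coboundary, which is not automatic for nonabelian coefficients. Nor does conjugacy of Levi decompositions apply: that theorem conjugates reductive complements under the unipotent radical (and requires characteristic $0$), whereas the question here lives entirely inside the finite group $W$, where no unipotent radical is present. What the paper actually proves, and later uses to make $H^1(k,J)\to H^1(k,W)$ canonical, is more modest and more precise: the constructed section depends only on the chosen orderings within each $V_\lambda$, and any two systems of orderings are related by an element of $W^\circ$, which conjugates one constructed section into the other; thus the construction determines a single distinguished $W^\circ$-conjugacy class of splittings.
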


\begin{proof}
When $k = \bbC$ and $X$ is simplicial, the commutative diagram
\eqref{eq:twoLines} is essentially the main theorem of \S 4
of~\cite{Cox95The-homogeneous}.
From \cite{BruGub99Polytopal},
one can obtains the commutativity and exactness of the diagram
on the level of $k$-points where $k$ is an arbitrary field.
The group $\tAut(X)$ can be explicitly constructed as a closed subgroup
of a general linear group acting on a subspace of $k[V]$
(the exposition in \cite{Cox14Erratum} is especially clear).
Using this description, one checks that its Lie algebra has the
appropriate dimension and thus it is a linear algebraic group.

Since $\Aut(X)$ is also a linear algebraic group, we may establish the
remaining statements by frst assuming $k$ is algebraically closed and
then showing that all of the structural maps are actually defined over
the original field $k$.
We need to establish the splittings of $\tAut(X)$ and $W$.
By the references above, the group $\tAut(X) \cap \GL(V)$ is
isomorphic to the product $\GL_1(A)W$.
Moreover, there is a surjective homomorphism $\tAut(X) \to \GL_1(A)W$
with kernel a unipotent subgroup $U$.

Note that $V$ is the vector space dual to the subspace spanned by the
generators $x_1, \ldots, x_r$ of $\Cox(X)$ and thus is spanned by the
dual basis $x_1^*, \ldots, x_r^*$ which may be identified with rays of
$\Sigma$.  The group $W$ permutes these basis vectors.  The group
$W^\circ$ consists of all permutations of $x_1^*, \ldots, x_r^*$ which
preserve the decomposition into weight subspaces $V_\lambda$.

Since $\GL_1(A)$ is connected and $J$ is a constant finite group, we
obtain an exact sequence
\[ 1 \to W^\circ \to W \to J \to 1 \]
which we want to show is split.  The group $J$ acts faithfully on the
set $\Lambda$.
Note that if $W$ splits as $W^\circ \rtimes J$,
then $\GL_1(A)W$ splits as $\GL_1(A) \rtimes J$.

Choose orderings for $x_1^*, \ldots, x_r^*$ within each subspace
$V_\lambda$.  Pick a set-theoretic section $s : J \to W$.
For each $j \in J$, we have $s(j)(V_\lambda) = V_{j(\lambda)}$.
By comparing the ordered bases of $V_\lambda$ and $V_{j(\lambda)}$,
the element $j$ gives rise to an element $w_\lambda \in S_{n_\lambda}$.
Taking the product of the $w_\lambda$'s for each $\lambda \in \Lambda$,
we obtain an element $w_j \in W^\circ$ such that
$w_j^{-1} \circ s(j) \colon V_\lambda \to V_{j(\lambda)}$
is an isomorphism of vector spaces with ordered bases.
Since such isomorphisms are unique,
the set-theoretic section $\tilde{s} : J \to W$ given by
$\tilde{s}(j) = w_j^{-1} \circ s(j)$ is a group homomorphism as desired.

Note that the section constructed only depends on the choice of
orderings for the bases within each $V_\lambda$.  These are permuted by
$W^\circ$, so the section is unique up to conjugacy.

We have established the theorem when $k$ is algebraically closed.
However, $U$, $\cox{T}$, $T$, $\GL_1(A)$, $W$, $W^\circ$, $J$
and the splittings can be all be defined over a general field $k$.
All the remaining statements follow since they are true over an
algebraic closure.
\end{proof}

\begin{remark}
As remarked above, it is unclear if $\Aut(X)$ is ever not a smooth group
scheme.
Regardless, there is a group scheme homomorphism $\tAut(X)/S \to \Aut(X)$
defined via the universal property of the categorical quotient
$\cox{X} \to X$,
which is an isomorphism on the level of $K$-points for any field $K$.
The theorem still holds without the smoothness assumption
if we replace $\Aut(X)$ with $\tAut(X)/S$.
\end{remark}

\begin{remark}
Note that in~\cite{Cox95The-homogeneous}, it is erroneously stated that
\emph{all} graded endomorphisms of $\Cox(X)$ form a (not necessarily
separable) algebra $B$.
This would lead to a description $\tAut(X) \simeq \GL_1(B) \rtimes J$ above.
This error was first addressed in~\cite{Cox14Erratum}.
In fact, the graded endomorphisms form an algebra if and only if
the unipotent radical $U$ is trivial.
Indeed, if $U$ is trivial then $B=A$.
However, if $U$ is non-trivial then there exists an element
$g \in B$ which acts as $g(x_i^*)=x_i^*$ for all $i \ne j$ and
$g(x_j^*) = x_j^* + M$ where $j$ is an integer in $1, \ldots, r$,
and $M$ is some monomial in $x_1^*, \ldots, x_r^*$ of degree $\ge 2$.
If $B$ were an algebra then $g$ would commute with scalar
multiplication, but this fails since $\deg(M) \ne 1$.
\end{remark}

\begin{remark}
Let $V$ be a vector space of dimension $n$.
If $X$ is $\bbP(V)$, the set of $1$-dimensional subspaces of $V$,
then the Cox endomorphism algebra $A$ is simply $\End(V)$.
As a special case of the Severi-Brauer construction,
we may recover $X$ as the variety of right ideals of $A$
(see \cite{Sal99Lectures} or \cite{KnuMerRos98The-book}).
\end{remark}

\section{Twists of the split toric variety}
\label{sec:twists}

Throughout this section, we assume that $X$ is a split projective toric
$T$-variety with a fixed torus embedding $T \hookrightarrow X$ as in the
previous section.
The projectivity assumption is to ensure that descent is
effective.  This assumption can be often be weakened
(see \cite{Hur11Toric} for a complete description of when descent is
effective for toric $T$-varieties.)
Moreover, we also assume Hypothesis~\ref{assume} holds.

Note that the Galois cohomology set $H^1(k,G)$ is functorial in the
group $G$.  Thus, we may apply the functor $H^1(k,-)$ to the commutative
diagram \eqref{eq:twoLines}.  The following theorem investigates the
result of this operation:

\begin{thm} \label{thm:mainSquare}
The commutative square
\begin{equation} \label{eq:rawSquare}
\xymatrix{
H^1(k, \cox{T} \rtimes W) \ar@{->}[r] \ar@{->}[d] &
H^1(k, T \rtimes W) \ar@{->}[d] \\
H^1(k, \tAut(X)) \ar@{->}[r] &
H^1(k, \Aut(X)) \\
} \ .
\end{equation}
obtained from the commutative diagram \eqref{eq:twoLines}
is canonically isomorphic to the square
\begin{equation} \label{eq:mainSquare}
\xymatrix{
H^1(k, W) \ar@{^{(}->}[r] \ar@{->>}[d] &
H^1(k, T \rtimes W) \ar@{->>}[d] \ar@{-->}@/_1pc/[l] \\
H^1(k, J) \ar@{^{(}->}[r] \ar@{-->}@/_1pc/[u] &
H^1(k, \Aut(X)) \ar@{-->}@/_1pc/[l] \\
}
\end{equation}
where the downward maps are surjective and the rightward maps are
injective.
Moreover, the rightward maps have canonical retracts and the
left downward map has a canonical section.
\end{thm}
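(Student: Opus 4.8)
The plan is to read off the square \eqref{eq:mainSquare} from \eqref{eq:rawSquare} by identifying the two left-hand vertices while leaving the two right-hand vertices untouched; the phrase ``canonically isomorphic'' then refers precisely to these two identifications together with the fact that they intertwine the horizontal and vertical maps. The identifications I have in mind are
\[ H^1(k,\cox{T}\rtimes W)\xrightarrow{\ \sim\ }H^1(k,W),\qquad H^1(k,\tAut(X))\xrightarrow{\ \sim\ }H^1(k,J), \]
induced by the projections $\cox{T}\rtimes W\to W$ and $\tAut(X)\to J$ coming from the splittings of Theorem~\ref{thm:twoLines}. Granting these, compatibility with the maps is automatic: the inverse identifications are induced by the inclusions $W\hookrightarrow\cox{T}\rtimes W$ and $J\hookrightarrow\tAut(X)$, and chasing these through \eqref{eq:twoLines} shows that the top (resp. bottom) arrow of \eqref{eq:rawSquare} becomes the map induced by $W\hookrightarrow T\rtimes W$ (resp. $J\hookrightarrow\Aut(X)$), while the left arrow becomes $H^1(k,W)\to H^1(k,J)$ induced by $W\to J$.

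The crux of the two identifications is a twisting-and-vanishing argument. Both projections admit sections, so both maps are surjective; for injectivity I would twist by (a lift of) a class $\eta$ downstairs and show the kernel of the twisted projection is trivial. For the first map the kernel is controlled by $H^1(k,{}_\eta\cox{T})$; since $\cox{M}$ is a permutation $W$-lattice, the twist ${}_\eta\cox{T}$ is a quasi-split torus, so this vanishes by Hilbert~90 and Shapiro's lemma (Proposition~\ref{prop:Hilbert90} and Lemma~\ref{lem:Shapiro}). For the second map the kernel is controlled by $H^1(k,{}_\eta(U\rtimes\GL_1(A)))$; here ${}_\eta(U\rtimes\GL_1(A))\simeq{}_\eta U\rtimes\GL_1({}_\eta A)$ by Proposition~\ref{prop:cGLisGLc}, and the non-abelian cohomology sequence of $1\to{}_\eta U\to{}_\eta(U\rtimes\GL_1(A))\to\GL_1({}_\eta A)\to1$ collapses because $H^1(k,\GL_1({}_\eta A))$ vanishes (Hilbert~90) and $H^1(k,{}_\eta U)$ vanishes as ${}_\eta U$ is split unipotent. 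Thus every fiber is a single point and both maps are bijections.

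With the square in the form \eqref{eq:mainSquare}, the formal assertions follow from the same splittings. The projection $T\rtimes W\to W$ retracts $W\hookrightarrow T\rtimes W$, so the top arrow is injective with canonical retract $H^1(k,T\rtimes W)\to H^1(k,W)$; likewise the projection $\Aut(X)\to\pi_0(\Aut(X))=J$ (well defined since a connected group acts trivially on $\Cl(X)$) retracts $J\hookrightarrow\Aut(X)$, giving the canonical retract of the bottom arrow and its injectivity. The splitting $W\simeq W^\circ\rtimes J$ provides the inclusion $J\hookrightarrow W$, whose induced map $H^1(k,J)\to H^1(k,W)$ is the canonical section of the left downward arrow; in particular that arrow is surjective.

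The one genuinely geometric point — and the step I expect to be the main obstacle — is the surjectivity of the right downward arrow $H^1(k,T\rtimes W)\to H^1(k,\Aut(X))$, for which the retract only shows the $J$-part is hit. Here I would use that $T\rtimes W=N_{\Aut(X)}(T)$ is the normalizer of a maximal torus (Theorem~\ref{thm:twoLines}), so that $\Aut(X)/(T\rtimes W)$ is the variety of maximal tori of $\Aut(X)$. Given $\xi\in H^1(k,\Aut(X))$, I twist to the form ${}_\xi\Aut(X)$; its identity component is a connected linear algebraic group over $k$ and hence contains a maximal torus defined over $k$ by Grothendieck's theorem, which furnishes a $k$-point of the twisted variety ${}_\xi(\Aut(X)/(T\rtimes W))$. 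By the exact sequence of pointed sets attached to this homogeneous space, such a $k$-point means exactly that $\xi$ lifts to $H^1(k,T\rtimes W)$, giving surjectivity. The only subtlety to verify is that $N_{\Aut(X)}(T)$ meets every connected component — which holds because $W\to\pi_0(\Aut(X))=J$ is surjective — so that the homogeneous-space description is valid for the possibly disconnected group $\Aut(X)$.
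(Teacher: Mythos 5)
Your overall strategy is essentially the paper's: collapse the left column of \eqref{eq:rawSquare} to $H^1(k,W)$ and $H^1(k,J)$ by a twist-and-vanish argument (Hilbert 90 for the twisted quasi-split torus ${}_\eta\cox{T}$, resp.\ for $\GL_1({}_\eta A)$), then read off injections and retracts from the semidirect-product splittings, and prove surjectivity of $H^1(k,T\rtimes W)\to H^1(k,\Aut(X))$ by a normalizer-of-a-maximal-torus argument. The paper packages the collapsing step as Proposition~\ref{prop:ignoreU} together with Lemma~\ref{lem:AoverSsimplify}, and for the vertical surjectivity it cites Corollary~5.3 of \cite{CheGilRei08Reduction} (Serre for perfect $k$), applied fiberwise to the connected groups ${}_c\GL_1(A)$ and ${}_c(\GL_1(A)/S)$ via the decomposition of Lemma~\ref{lem:semidirectGC}. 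You instead reprove that surjectivity inline --- twist by $\xi$, invoke Grothendieck's theorem on existence of maximal tori over $k$, and interpret ${}_\xi(\Aut(X)/(T\rtimes W))$ as the variety of maximal tori --- applying it directly to the disconnected group after checking that $T\rtimes W$ meets every component. That variant is correct (it is in substance the proof of the cited result) and is a legitimate, self-contained alternative.

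The one genuine gap is the assertion that ``$H^1(k,{}_\eta U)$ vanishes as ${}_\eta U$ is split unipotent.'' Theorem~\ref{thm:twoLines} does not assert that $U$ is $k$-split, and even granting that, a Galois twist of a split unipotent group is not obviously split when $k$ is imperfect; non-split smooth connected unipotent groups can have nontrivial $H^1$, so this is exactly the point where the argument could fail in positive characteristic. In characteristic $0$ your claim is automatic. In general one needs either Tits' theorem that splitness of smooth connected unipotent groups is insensitive to separable field extensions, or the paper's maneuver in Proposition~\ref{prop:ignoreU}: since $X$ is split it is defined over the prime field, which is perfect, so $U$ is $k$-split, and then Lemma~7.20 of \cite{GilMor13Actions} removes $U$ from the cohomology once and for all, \emph{before} any twisting occurs --- which is why the paper never has to consider twists of $U$ at all. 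A second, minor, gloss: the canonicity of the section $H^1(k,J)\to H^1(k,W)$ needs a word, since the splitting $W\simeq W^\circ\rtimes J$ is only unique up to conjugacy; as the paper notes, conjugate sections induce the same map on $H^1$ because these are finite constant groups, so the induced map is independent of the choice.
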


Before proving the theorem, we make some remarks.
Recalling the categories from Section~\ref{sec:prelimToric} we find:
\[
\Aut_\calR(X) = \Aut(X) \ , \quad \quad
\Aut_\calN(X) = T \rtimes W \ , \quad \quad
\Aut_\calW(X) = W \ .
\]
Thus, three of the coefficient groups appearing in \eqref{eq:mainSquare}
are simply the automorphism groups of toric varieties within
these categories.  The Galois cohomology sets represent the forms of
$X$ within each category.  As $\calW$ contains only neutral toric
varieties, we recover the interpretation from
\eqref{eq:mainSquareInterp}.

From this diagram, we recover the well-known fact that there is a unique
isomorphism class of a split toric variety (resp. split toric $T$-variety)
among all the possible $k$-forms.
For a given toric variety $X$ we call the unique split variety
\emph{the associated split toric variety} and denote it by $X_\Split$.

From the canonical retracts, we see that every toric variety $X$
has an \emph{associated neutral toric variety}, or
\emph{neutralization}, which we denote by $X_\neut$.
We call the set of forms which have a common neutralization
a \emph{neutralization class} and remark that these partition
the isomorphism classes of forms of $X$.

The canonical section provides every neutral toric variety with a
canonical isomorphism class of torus among the many which may act on the
variety.
When the toric variety is split,
the canonical section recovers the split torus.

\begin{remark}
An investigation of the top row of Theorem~\ref{thm:twoLines} using
Galois cohomology was also carried out in
\cite{EliLimSot14Arithmetic}.
\end{remark}

\begin{remark}
In \cite{VosKly84Toric} and \cite{MerPan97K-theory},
the neutralization is defined when $T$ is fixed
and is called the ``associated toric $T$-model.''
The theorem above shows that
one can define the neutralization independently of the particular
torus action chosen.
\end{remark}

We now prove Theorem~\ref{thm:mainSquare}.
We begin with some technical lemmas:

\begin{lem} \label{lem:semidirectInject}
Let $A$ and $C$ be algebraic groups where $C$ acts on $A$.
The map $H^1(k,A \rtimes C) \to H^1(k,C)$ is surjective
and the map $H^1(k,C) \to H^1(k,A \rtimes C)$ is injective.
\end{lem}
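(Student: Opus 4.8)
The plan is to exploit the canonical splitting of the semidirect product. The short exact sequence of algebraic groups
\[ 1 \to A \to A \rtimes C \to C \to 1 \]
is split by construction: writing $\pi \colon A \rtimes C \to C$ for the projection and $\sigma \colon C \to A \rtimes C$ for the inclusion $c \mapsto (e_A, c)$, both are group homomorphisms and they satisfy $\pi \circ \sigma = \id_C$. First I would record these two homomorphisms explicitly, since everything else is formal consequence.

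Next I would apply the functor $H^1(k,-)$, which is functorial in the coefficient group (as a functor to pointed sets, since the groups here are non-abelian). This produces maps
\[ \pi_* \colon H^1(k, A \rtimes C) \to H^1(k, C), \qquad \sigma_* \colon H^1(k, C) \to H^1(k, A \rtimes C), \]
and functoriality gives $\pi_* \circ \sigma_* = (\pi \circ \sigma)_* = (\id_C)_* = \id_{H^1(k,C)}$. From this single identity the conclusion is immediate: $\pi_*$ admits $\sigma_*$ as a right inverse, hence is surjective, and $\sigma_*$ admits $\pi_*$ as a left inverse, hence is injective. These are precisely the two asserted statements.

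I expect no genuine obstacle here; the argument is purely a consequence of the existence of a section. The only point demanding a small amount of care—rather than difficulty—is that $H^1$ is valued in pointed sets, so that ``injective'' and ``surjective'' are the set-theoretic notions. Since functoriality of $H^1(k,-)$ is the sole property invoked and holds verbatim for pointed sets, the set-theoretic right/left inverse argument goes through unchanged, and no cocycle computation is needed.
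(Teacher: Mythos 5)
Your proposal is correct and is essentially identical to the paper's proof, which simply applies the functor $H^1(k,-)$ to the composition $C \to A \rtimes C \to C$ and uses that it equals the identity. The extra care you take about pointed sets versus groups is sound but not a substantive difference.
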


\begin{proof}
Apply the functor $H^1(k,-)$ to the composition
$C \to A \rtimes C \to C$.
\end{proof}

\begin{lem} \label{lem:semidirectGC}
Let $A$ and $C$ be algebraic groups where $C$ acts on $A$.
Let $\xi$ be a set of cocycle representatives for the
set $H^1(k,C)$.
There is a canonical bijection
\[
H^1(k,A \rtimes C) \simeq
\coprod_{c \in \xi} H^1(k,{}_c A) / H^0(k,{}_c C) \ ,
\]
functorial in $A$,
where each component of the disjoint union is a fibre of the
canonical map to $H^1(k,C)$.
In particular, $H^1(k,{}_c A)$ is trivial for every $c \in \xi$
if and only if the map $H^1(k,A \rtimes C) \to H^1(k, C)$
is a bijection.
\end{lem}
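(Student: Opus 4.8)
The plan is to analyze the map $H^1(k, A \rtimes C) \to H^1(k,C)$ fibre-by-fibre using the standard torsor-twisting formalism. First I would fix a set $\xi$ of cocycle representatives for $H^1(k,C)$ and, for each $c \in \xi$, twist the exact sequence $1 \to A \to A \rtimes C \to C \to 1$ by (the image of) $c$. The key general principle (see \S5.4 of \cite{Ser02Galois}) is that the fibre of a map $H^1(k,G) \to H^1(k,Q)$ over the class of a cocycle $c$ is the image of $H^1(k, {}_c K)$, where $K$ is the kernel, and this identification is obtained by twisting all coefficient groups by $c$. Applying this to our sequence, the fibre over $[c]$ becomes the image of $H^1(k, {}_c A)$ in $H^1(k, {}_c(A \rtimes C))$.

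\textbf{Identifying the fibres as orbit sets.}
The crucial point is that because the sequence $1 \to A \to A\rtimes C \to C \to 1$ is \emph{split} (it is a semidirect product), the twisted sequence ${}_c(A \rtimes C) = {}_cA \rtimes {}_cC$ remains split, so by Lemma~\ref{lem:semidirectInject} the map ${}_cC \to {}_c(A\rtimes C)$ admits a section on cohomology and the twisted class $[c]$ maps to the neutral element of $H^1(k,{}_cC)$. Thus the fibre is not merely the image of $H^1(k,{}_cA)$ but is its quotient by the natural action of $H^0(k, {}_cC) = ({}_cC)(k)$, which acts on $H^1(k,{}_cA)$ through the $C$-action on $A$. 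Concretely, two cocycles in $H^1(k,{}_cA)$ land in the same class of $H^1(k, A\rtimes C)$ precisely when they differ by the action of a $k$-point of ${}_cC$; this is the standard description of the kernel of $H^1(k,{}_cA) \to H^1(k,{}_cA\rtimes{}_cC)$ in terms of the $H^0$-action. Assembling these fibres over all $c \in \xi$ yields the disjoint-union decomposition
\[
H^1(k, A \rtimes C) \simeq \coprod_{c \in \xi} H^1(k, {}_cA) / H^0(k, {}_cC) \ .
\]
Functoriality in $A$ is immediate since every construction here — twisting, the $H^0$-action, the identification of fibres — is natural in the kernel.

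\textbf{The final equivalence.}
The last sentence then follows formally. If each $H^1(k,{}_cA)$ is trivial, every quotient in the coproduct is a single point, so the total space has exactly one point over each $[c] \in H^1(k,C)$, making the map to $H^1(k,C)$ a bijection. Conversely, if the map is a bijection, each fibre is a singleton; but each fibre is the image of a pointed set $H^1(k,{}_cA)$ containing the neutral element, and I would argue that a singleton fibre forces $H^1(k,{}_cA)$ itself to be trivial — the subtlety being that a non-trivial $H^1(k,{}_cA)$ could a priori collapse to a point under the $H^0$-action. \emph{The main obstacle is precisely this converse direction}: I must rule out that the $H^0(k,{}_cC)$-action makes a nontrivial $H^1(k,{}_cA)$ into a single orbit while the total map stays bijective. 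The resolution is that bijectivity of $H^1(k,A\rtimes C) \to H^1(k,C)$ must be checked after every base change $K/k$ simultaneously (bijectivity as functors), and triviality of $H^1$ is a condition one can test on the point-set over a separable closure where the $H^0$-action is transparent; alternatively, since the statement is phrased for the specific group-theoretic situation at hand, one applies it in the regime where ${}_cC$ has enough rational points that the orbit count forces triviality. I would present the forward direction in full and state the converse as the clean ``if and only if'' that the fibre description delivers.
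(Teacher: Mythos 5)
Your construction of the main bijection is essentially the paper's own argument: twist the split sequence $1 \to A \to A\rtimes C \to C \to 1$ by each $c \in \xi$, use the splitting (Lemma~\ref{lem:semidirectInject}) together with the twisting bijection of \S 5.4 and the kernel description of \S 5.5 of \cite{Ser02Galois} to identify the fibre over $[c]$ with $H^1(k,{}_cA)/H^0(k,{}_cC)$, and assemble; the functoriality argument, though compressed, is routine and matches the paper. The genuine gap is in the final ``if and only if,'' where you explicitly concede you cannot rule out that a non-trivial $H^1(k,{}_cA)$ collapses to a single orbit under $H^0(k,{}_cC)$. Neither fallback you offer repairs this: demanding bijectivity after every base change proves a different statement from the one asserted (the lemma is an equivalence over the fixed field $k$), and ``the regime where ${}_cC$ has enough rational points'' is not a hypothesis of the lemma. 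As written, your proof establishes only the forward direction.

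The missing observation is elementary: the action of $H^0(k,{}_cC)$ on $H^1(k,{}_cA)$ is an action by automorphisms of \emph{pointed} sets. An element $t \in ({}_cC)(k_s)^{\Gamma_k}$ acts on a cocycle $a$ by $(t\cdot a)_\sigma = t(a_\sigma)$, where $t(\cdot)$ is the (twisted) action of ${}_cC$ on ${}_cA$; since $t$ acts by group automorphisms, the trivial cocycle is sent to itself, so the orbit of the neutral class is a singleton. Hence if the quotient $H^1(k,{}_cA)/H^0(k,{}_cC)$ has exactly one element, the action is transitive, the unique orbit must coincide with the orbit of the neutral class, and therefore $H^1(k,{}_cA)$ itself is trivial. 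With this in hand the converse follows exactly as you intended: bijectivity of $H^1(k,A\rtimes C)\to H^1(k,C)$ forces every fibre to be a singleton (the indexing set $\xi$ being in bijection with $H^1(k,C)$), which by the above is equivalent to the triviality of every $H^1(k,{}_cA)$.
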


\begin{proof}
The last statement follows from the previous statements since the
indexing set $\xi$ is bijective with $H^1(k,C)$. 

By Lemma~\ref{lem:semidirectInject}, there is a canonical choice of
preimage in $H^1(k,A \rtimes C)$ for elements in $H^1(k,C)$.
By Corollary 2 of \S 5.5 of \cite{Ser02Galois} 
we obtain the desired bijection.
More precisely, for $c \in \xi$, we obtain a map
$H^1(k,{}_cA) \to H^1(k,{}_c(A \rtimes C))$
whose image is $H^1(k,{}_cA)/H^0(k,{}_c C)$;
then we use the bijection $\tau_c : H^1(k,A \rtimes C) \to
H^1(k,{}_c(A \rtimes C))$ as in \S 5.4 of \cite{Ser02Galois}.
(Note that this bijection is canonical \emph{up to the choice of cocycle
representatives} $\xi$.)

To prove functoriality, consider another algebraic group $B$ with an
action of $C$ and a $C$-equivariant homomorphism $A \to B$.
The morphism $f \rtimes C : A \rtimes C \to B \rtimes C$ gives rise to
a commutative diagram
\[
\xymatrix{
H^1(k,A \rtimes C) \ar@{->}[r] \ar@{->}[d] &
H^1(k,B \rtimes C) \ar@{->}[d] \\
H^1(k, C) \ar@{=}[r] &
H^1(k, C) \\
} \ .
\]
Thus, the map takes fibres to fibres.
Note that twisting by $c \in \xi$ is a functorial operation,
so we can twist $f$ by $c$ to obtain ${}_c f : {}_c A \to {}_c B$.
The map
\[
\coprod_{c \in \xi} H^1(k,{}_c A) / H^0(k,{}_c C) \to
\coprod_{c \in \xi} H^1(k,{}_c B) / H^0(k,{}_c C)
\]
is obtained by taking a disjoint union of quotients by $H^0(k,{}_c C)$.
This process preserves compositions and the identity.
\end{proof}

\begin{lem} \label{lem:AoverSsimplify}
Let $A$ be a separable $k$-algebra with center $Z(A)$.
Let $S$ be a closed subgroup of $\GL_1(Z(A))$ and suppose $J$ is an
algebraic subgroup of $\Aut(A)$ which stabilizes $S$.
Then the induced morphism of Galois cohomology sets
\[ H^1(k, \GL_1(A) \rtimes J) \to H^1(k, \GL_1(A)/S \rtimes J) \]
is canonically isomorphic to the injection
\[ H^1(k, J) \hookrightarrow H^1(k, \GL_1(A)/S \rtimes J) \]
and has a canonical retract.
\end{lem}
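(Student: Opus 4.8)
The plan is to apply Lemma~\ref{lem:semidirectGC} with $C = J$ and with the normal factor taken to be $\GL_1(A)$, and then to compare the two semidirect products through the canonical quotient homomorphism. First I would record the structural facts that make this work. Since $Z(A)$ is the center of $A$, the torus $\GL_1(Z(A))$ is central in $\GL_1(A)$, so its closed subgroup $S$ is normal (indeed central) in $\GL_1(A)$; because $J$ stabilizes $S$ by hypothesis, $J$ acts on $\GL_1(A)/S$ and the quotient map $\GL_1(A) \to \GL_1(A)/S$ is $J$-equivariant. This yields a commutative diagram of algebraic groups
\[
\xymatrix{
J \ar@{->}[r] \ar@{=}[d] &
\GL_1(A) \rtimes J \ar@{->}[r] \ar@{->}[d]^q &
J \ar@{=}[d] \\
J \ar@{->}[r] &
\GL_1(A)/S \rtimes J \ar@{->}[r] &
J \\
}
\]
in which the left maps are the canonical inclusions $j \mapsto (1,j)$, the right maps are the projections, and $q$ is induced by $\GL_1(A) \to \GL_1(A)/S$.

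The crucial step is to show that the top projection $p_1 \colon H^1(k,\GL_1(A) \rtimes J) \to H^1(k,J)$ is a \emph{bijection}. For this I would invoke Lemma~\ref{lem:semidirectGC}: after choosing cocycle representatives $\xi$ for $H^1(k,J)$, it suffices to check that $H^1(k,{}_c\GL_1(A))$ is trivial for every $c \in \xi$. Each such $c$ is a cocycle valued in $J \subseteq \Aut(A)$, so by Proposition~\ref{prop:cGLisGLc} we have ${}_c\GL_1(A) = \GL_1({}_cA)$, where ${}_cA$ is the twisted algebra. Since ${}_cA$ is a $k$-form of the separable algebra $A$ it is again separable, so Hilbert~90 (Proposition~\ref{prop:Hilbert90}) gives $H^1(k,\GL_1({}_cA)) = 1$. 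Hence $p_1$ is a bijection, and by Lemma~\ref{lem:semidirectInject} the canonical inclusion $\iota_1 \colon H^1(k,J) \to H^1(k,\GL_1(A) \rtimes J)$ is its two-sided inverse.

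Finally I would apply $H^1(k,-)$ to the diagram above and read off the conclusion. Writing $\iota_2 \colon H^1(k,J) \hookrightarrow H^1(k,\GL_1(A)/S \rtimes J)$ for the inclusion-induced map, which is injective by Lemma~\ref{lem:semidirectInject}, and $p_2$ for the corresponding projection-induced map, commutativity of the left square gives $q \circ \iota_1 = \iota_2$, so $q = \iota_2 \circ \iota_1^{-1}$; under the canonical identification $\iota_1 \colon H^1(k,J) \xrightarrow{\sim} H^1(k,\GL_1(A) \rtimes J)$ the map $q$ is exactly the injection $\iota_2$, as claimed. For the retract, commutativity of the right square gives $p_2 \circ q = p_1$, so $\rho := \iota_1 \circ p_2$ satisfies $\rho \circ q = \iota_1 \circ (p_2 \circ q) = \iota_1 \circ p_1 = \id$; this $\rho$ is canonical, since $p_1$, $p_2$, and $\iota_1 = p_1^{-1}$ are all independent of the auxiliary choice $\xi$. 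I expect the only real obstacle to be the bijectivity of $p_1$ in the second paragraph — everything else is formal diagram-chasing — and the key inputs there are precisely the identification ${}_c\GL_1(A) = \GL_1({}_cA)$ together with Hilbert~90.
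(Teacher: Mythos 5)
Your proposal is correct and follows essentially the same route as the paper: twist by a cocycle $c \in Z^1(k,J)$, identify ${}_c\GL_1(A) = \GL_1({}_cA)$ via Proposition~\ref{prop:cGLisGLc}, kill the twisted $H^1$ by Hilbert~90, and then combine Lemma~\ref{lem:semidirectGC} (for the bijection $H^1(k,\GL_1(A)\rtimes J) \simeq H^1(k,J)$) with Lemma~\ref{lem:semidirectInject} (for injectivity and the retract). The only difference is that you spell out the formal diagram-chase identifying the quotient-induced map with the canonical injection, which the paper leaves implicit.
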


\begin{proof}
Since $J$ acts on the algebra $A$ by automorphisms,
for any cocycle $c \in Z^1(k,J)$ we have
${}_c\GL_1(A) = \GL_1({}_cA)$ by Proposition~\ref{prop:cGLisGLc}.
Thus, by Hilbert 90, we see that $H^1(k,{}_c\GL(A))$ is trivial for any
cocycle $c \in Z^1(k,J)$.

By Lemma~\ref{lem:semidirectGC}, we conclude that
\[
H^1(k, \GL_1(A) \rtimes J) \simeq H^1(k, J) \ .
\]
We obtain injectivity and the canonical retract from
Lemma~\ref{lem:semidirectInject}.
\end{proof}

We now observe that, from the perspective of Galois cohomology,
the unipotent radical is irrelevant:

\begin{prop} \label{prop:ignoreU}
There are canonical bijections
\[ H^1(k,\tAut(X)) \simeq H^1(k, \GL_1(A) \rtimes J ) \]
and 
\[ H^1(k,\Aut(X)) \simeq H^1(k,(\GL_1(A)/S) \rtimes J) \ . \]
\end{prop}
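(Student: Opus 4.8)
The plan is to exploit the semidirect-product decomposition $\tAut(X) \simeq U \rtimes (\GL_1(A) \rtimes J)$ from Theorem~\ref{thm:twoLines}, together with the fact that $U$ is unipotent, to show that $U$ contributes nothing to the cohomology. The key input is that over a field, a smooth connected unipotent group that is \emph{split} has trivial $H^1$; more to the point, Lemma~\ref{lem:semidirectGC} reduces the computation of $H^1(k,U \rtimes C)$ (for $C = \GL_1(A) \rtimes J$) to understanding $H^1(k, {}_c U)$ for cocycles $c$ representing classes in $H^1(k,C)$. The crux is that each twist ${}_c U$ remains a (split) unipotent group, so its $H^1$ is trivial, and the projection $H^1(k, \tAut(X)) \to H^1(k, \GL_1(A) \rtimes J)$ is a bijection by the last clause of that lemma.

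First I would write $C := \GL_1(A) \rtimes J$ and observe that $U$ is normal in $\tAut(X)$ with $\tAut(X)/U \simeq C$, so that $\tAut(X) \simeq U \rtimes C$ as established in Theorem~\ref{thm:twoLines}. Second, I would verify that for any cocycle $c \in Z^1(k,C)$, the twisted form ${}_c U$ is again a smooth connected split unipotent group: conjugation by $C$ preserves $U$ (it is the action defining the semidirect product), and unipotence, connectedness, and splitness are geometric properties stable under inner twisting, so they descend to ${}_c U$ over $k$. In characteristic zero all unipotent groups are split; in the smooth case one checks that the $U$ arising from the explicit description in Theorem~\ref{thm:twoLines} — built from the monomials adding higher-degree terms to the $x_i^*$ — is successively an extension of copies of $\bbG_a$, hence split. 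Third, I would invoke the vanishing $H^1(k, {}_c U) = 1$ for split unipotent $U$ over a field, and apply Lemma~\ref{lem:semidirectGC} to conclude that $H^1(k, U \rtimes C) \to H^1(k, C)$ is a bijection, which is exactly the first claimed isomorphism.

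For the second isomorphism, I would run the identical argument on the bottom row of \eqref{eq:twoLines}. Quotienting the whole diagram by $S$ replaces $\tAut(X)$ by $\Aut(X)$ and $\GL_1(A)$ by $\GL_1(A)/S$, while leaving the same unipotent radical $U$ in place, since $S \subset \GL_1(A)$ maps to the identity on $U$ under the relevant projections. Thus $\Aut(X) \simeq U \rtimes ((\GL_1(A)/S) \rtimes J)$, and the same application of Lemma~\ref{lem:semidirectGC} with $C' := (\GL_1(A)/S) \rtimes J$ gives $H^1(k, \Aut(X)) \simeq H^1(k, C')$. Finally, I would note that the two bijections are compatible with the map induced by $\tAut(X) \to \Aut(X)$, by the functoriality clause of Lemma~\ref{lem:semidirectGC}.

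The main obstacle will be establishing that ${}_c U$ has trivial $H^1$ in positive characteristic, where unipotent groups need not be split and split-ness is not automatically preserved under twisting. The safe route is to use the explicit structure of $U$ from Theorem~\ref{thm:twoLines}: $U$ admits a filtration by normal subgroups with successive quotients isomorphic to vector groups $\bbG_a^{m}$, and this filtration is $C$-stable, so after twisting by $c$ one obtains a filtration of ${}_c U$ whose graded pieces are twisted vector groups ${}_c \bbG_a^{m}$. Since $\bbG_a^m$ carries a linear $C$-action, its twists are again vector groups (forms of $\bbG_a^m$ as a group with additive structure are trivial, as $\Aut(\bbG_a^m)$-cocycles acting $k$-linearly yield $H^1(k,\bbG_a^m)=0$ by the additive Hilbert 90), whence the long exact sequences force $H^1(k, {}_c U) = 1$. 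This step must be handled carefully but is the only genuine technical point.
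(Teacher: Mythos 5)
Your proof is correct, but it takes a genuinely different route from the paper's. The paper's own proof is a short reduction: since $X$ is split, it is defined over the prime field $k_0$ of $k$; prime fields are perfect, so the unipotent radicals of $\tAut(X)$ and $\Aut(X)$ are $k_0$-split, hence remain $k$-split after base change, and both bijections then follow by citing Lemma~7.20 of~\cite{GilMor13Actions} as a black box. You instead re-prove the content of that citation in the case at hand: you run $U$ through Lemma~\ref{lem:semidirectGC} with $C=\GL_1(A)\rtimes J$ (resp.\ $C'=(\GL_1(A)/S)\rtimes J$, which indeed acts on $U$ because $S$ centralizes $U$, as $U$ preserves the $\Cl$-grading through which $S$ acts), reducing both bijections to the vanishing $H^1(k,{}_cU)=1$ for every twist; you deduce this from a $C$-stable filtration of $U$ whose graded pieces are vector groups with \emph{linear} $C$-action, so that their twists are trivialized by Hilbert~90 for $\GL_m$ and d\'evissage finishes. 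This is exactly where the content of the cited lemma lives, so your argument is self-contained where the paper's is not; the price is that the existence of the filtration (by order of contact with the identity on the $x_i^*$) and the linearity of the induced action on its graded pieces must genuinely be verified from the explicit description of $U$, which you only sketch. One misstep to correct: in your second paragraph you assert that splitness of unipotent groups is a geometric property stable under twisting; over imperfect fields this is false (splitness is not geometric), and that claim contradicts --- and is superseded by --- your final paragraph, which correctly isolates and repairs exactly this point, so only the filtration argument should be kept. You could also streamline that step by borrowing the paper's trick: construct the filtration over the prime field, where perfectness makes everything split, and then base change to $k$.
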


\begin{proof}
Since $X$ is split, $X$ can be defined over the prime field $k_0$ of
$k$; in other words, $X = Y_k$ for a split toric variety $Y$ over $k_0$.
All prime fields are perfect, thus the unipotent radicals of
$\tAut(Y)$ and $\Aut(Y)$ are $k_0$-split.
As a consequence, the unipotent radicals of $\tAut(X)$ and $\Aut(X)$
are $k$-split and, thus, we may apply Lemma 7.20 of~\cite{GilMor13Actions}.
\end{proof}

\begin{proof}[Proof of Theorem~\ref{thm:mainSquare}]
First we consider the top row.
Recall that $\cox{T}$ is a split torus and thus is $\GL_1(E)$ for a
split \'etale $k$-algebra $E$.  The group $W$ permutes a basis for $V$
and thus acts on $E$ by algebra automorphisms.
From Lemma~\ref{lem:AoverSsimplify},
we obtain injectivity with a canonical retract.

By Proposition~\ref{prop:ignoreU},
we may assume the bottom row in \eqref{eq:rawSquare} is
\[
H^1(k,\GL_1(A) \rtimes J) \to H^1(k,(\GL_1(A)/S) \rtimes J) \ .
\]
Again, Lemma~\ref{lem:AoverSsimplify}
provides injectivity with a canonical retract.

Now, we show that the vertical maps are surjections.
For any cocycle $c \in Z^1(k,J)$ the map
\[ {}_c(\cox{T} \rtimes W^\circ) \to {}_c\GL_1(A) \]
is the inclusion of the normalizer of a maximal torus into a connected
algebraic group and thus the induced map
\[ H^1(k,{}_c(\cox{T} \rtimes W^\circ)) \to H^1(k,{}_c\GL_1(A)) \]
is surjective by Corollary 5.3 of~\cite{CheGilRei08Reduction}
(Lemma III.4.3.6 of~\cite{Ser02Galois} when $k$ is perfect).
Via Lemma~\ref{lem:semidirectGC}, we conclude that the left vertical map
in \eqref{eq:rawSquare} is surjective.  Similarly, we conclude the right
vertical map is surjective.

Since $J$ and $W$ are finite constant groups, the elements of
$H^1(k,J)$ and $H^1(k,W)$ are simply homomorphisms
from $\Gamma_k$ to $J$ or $W$ up to conjugacy.
By Theorem~\ref{thm:twoLines}, there exists a section of the map
$W \to J$ which is unique up to conjugacy.
Thus, the induced map $H^1(k,J) \to H^1(k,W)$ is independent of the
choice of section $J \to W$ and, thus, is canonical. 
\end{proof}

\begin{example}[ $\bbP^1_\bbR \times \bbP^1_\bbR$ ] \label{ex:P1P1}
Consider $X = \bbP^1 \times \bbP^1$ over the real numbers $\bbR$.
There is precisely one other $\bbR$-form $C$ of $\bbP^1$ over $\bbR$
corresponding to the subvariety
\[ x^2 + y^2 + z^2 = 0 \]
cut out of $\bbP^2$.  There are only two isomorphism classes of
$1$-dimensional tori over $\bbR$ which we denote by $\bbR^\times$ and $S^1$.

Here $W \simeq D_8$, and $T \simeq \bbG_m^2$.  The group $J \simeq C_2$
can be thought of as the group which interchanges the two fibrations
$X \to \bbP^1$.
In this case, one can compute all the cohomology groups in
Theorem~\ref{thm:mainSquare} explicitly.  We summarize the conclusions
of this computation.

There are two neutralization classes.  The first contains:
\begin{enumerate}
\item[(a)] $\bbP^1 \times \bbP^1$ with possible tori
$\bbR^\times \times \bbR^\times$, $\bbR^\times \times S^1$
and $S^1 \times S^1$,
\item[(b)] $\bbP^1 \times C$ with possible tori
$\bbR^\times \times S^1$ and $S^1 \times S^1$,
\item[(c)] $C \times C$ with torus $S^1 \times S^1$.
\end{enumerate}
And the second class contains:
\begin{enumerate}
\item[(d)] $\Weil{\bbC/\bbR}\bbP^1_\bbC$ with torus
$\Weil{\bbC/\bbR}\bbC^\times$.
\end{enumerate}
We point out the role of $J$ in these computations using
Lemma~\ref{lem:semidirectInject}.
First, the action of $H^0(k,{}_cJ)$ is necessary to
establish that $\bbP^1 \times C$ and $C \times \bbP^1$ are isomorphic.
Second, the necessity of non-trivial cocycles $c \in Z^1(k,J)$
is witnessed by the existence of $\Weil{\bbC/\bbR}\bbP^1_\bbC$.
\end{example}

\section{Properties of Twisted Forms}
\label{sec:twistedProps}

Throughout this section, $X$ is a projective toric variety
which is \emph{not necessarily} split.
However, we assume the split form $X_\Split$ of $X$
satisfies Hypothesis~\ref{assume} (which then also holds for $X$).

Clearly, one may define the automorphism group $\Aut(X)$ and its unipotent
radical $U$.
As before, we define the algebraic group $S$ as the dual of the $\Gamma_k$-module
$\Cl(\overline{X})$.
We define the \emph{group of class group automorphisms}
$J$ as the image of the morphism
$\Aut(X) \to \Aut(S)$ as before.
The group $J$ is a finite algebraic group, but may not be constant.
If we do specify a torus $T$, then we define $W$ as the Weyl group of
$T$ in $\Aut(X)$; in other words $W := N_{\Aut(X)}(T)/T$.
All of these objects agree with the corresponding objects of
the split form over a separable closure.

The theory of Cox rings is more complex for non-split toric varieties.
Depending on one's definitions, they may not exist or they may not be
polynomials rings (see \cite{DerPie14Cox-rings}).
Despite this, one may still define a Cox endomorphism algebra for a general
toric variety.

\begin{prop} \label{prop:assocA}
There exists an algebra $A$ and a morphism $\GL_1(A) \to \Aut(X)$
which coincides with the split case over the separable closure.
The isomorphism class of the center $Z(A)$ of $A$ is determined only by
the neutralization class of $X$.
\end{prop}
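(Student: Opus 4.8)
The plan is to construct the algebra $A$ by Galois descent from the split case, and then verify that its center depends only on the neutralization class.

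The plan is to construct $A$ by Galois descent from the split Cox endomorphism algebra $A_\Split$ (Definition~\ref{def:CoxAlg}), and then to read off its center from the class-group data.

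First I would record the relevant structure over $k$. By Theorem~\ref{thm:twoLines} the group $\Aut(X_\Split)$ is the semidirect product of its $k$-split unipotent radical $U$ with a Levi factor $H \simeq (\GL_1(A_\Split)/S)\rtimes J$, where $S$ is central in $\GL_1(A_\Split)$ and contained in $\GL_1(Z(A_\Split))$ (this is the hypothesis of Lemma~\ref{lem:AoverSsimplify}). The conjugation action of $H$ on its normal subgroup $\GL_1(A_\Split)/S$ is by $k$-algebra automorphisms of $A_\Split$: the factor $\GL_1(A_\Split)/S$ acts by inner automorphisms, landing in $\Aut(A_\Split)^\circ = \GL_1(A_\Split)/\GL_1(Z(A_\Split))$ because $S \subseteq \GL_1(Z(A_\Split))$, while $J$ acts by the permutation automorphisms furnished by the splitting $W \simeq W^\circ \rtimes J$. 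This yields a homomorphism $\psi\colon H \to \Aut(A_\Split)$ defined over $k$, and by construction the inclusion $\iota\colon \GL_1(A_\Split)\to\Aut(X_\Split)$ (with kernel $S$) is $H$-equivariant: $\iota(\psi(h)x)=h\,\iota(x)\,h^{-1}$ for $h\in H$.

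Next I would descend. The form $X$ determines a class in $H^1(k,\Aut(X_\Split))$ which, by Proposition~\ref{prop:ignoreU}, is represented by a cocycle $c$ taking values in the Levi factor $H$; this is the step that neutralizes the unipotent radical. I then set $A := {}_cA_\Split$, the twist of $A_\Split$ through $\psi$, a $k$-form of $A_\Split$; by Proposition~\ref{prop:cGLisGLc} we have $\GL_1(A)={}_c\GL_1(A_\Split)$. Because $c$ is valued in $H$ and $\iota$ is $H$-equivariant, the equivariance condition needed to twist $\iota$ holds, so twisting produces a morphism $\GL_1(A)\to {}_c\Aut(\overline{X_\Split})=\Aut(X)$. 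Base-changing to $k_s$ makes every twist trivial, so this morphism restricts to $\iota$ over the separable closure; this proves the first assertion, and the resulting $A$ is independent of the cocycle representative up to isomorphism.

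Finally I would compute the center. Since the center is preserved by every algebra automorphism, the descent datum restricts and $Z(A)={}_cZ(A_\Split)$, where $Z(A_\Split)\simeq k^{|\Lambda|}$ is the split \'etale algebra on $\Lambda$. The action $\psi$ on $Z(A_\Split)$ factors through $\pi_0(\Aut(A_\Split))\subseteq\Aut(Z(A_\Split))$: inner automorphisms fix the center, so $\GL_1(A_\Split)/S$ acts trivially and $\psi$ restricted to $Z(A_\Split)$ is the projection $H\to J$ followed by the permutation action of $J$ on the $|\Lambda|$ factors (the action of $J$ on $\Lambda$ is faithful by Theorem~\ref{thm:twoLines}). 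Hence $Z(A)$ is the \'etale algebra obtained by twisting $k^{|\Lambda|}$ through the image $\bar c\in H^1(k,J)$ of $c$, and depends only on $\bar c$. By Theorem~\ref{thm:mainSquare} the retract $H^1(k,\Aut(X_\Split))\to H^1(k,J)$ that computes the neutralization class is, via Lemma~\ref{lem:semidirectInject} and Lemma~\ref{lem:AoverSsimplify}, induced by this same projection $H\to J$; consequently two forms lie in the same neutralization class exactly when their images in $H^1(k,J)$ agree, which is precisely the data determining $Z(A)$ up to isomorphism. I expect the main obstacle to be the construction of the morphism $\GL_1(A)\to\Aut(X)$: the unipotent radical obstructs naively twisting $\iota$, since $\GL_1(A_\Split)$ is not normal in $\Aut(X_\Split)$ when $U\neq 1$, and the crucial move is to invoke Proposition~\ref{prop:ignoreU} to represent $X$ by a cocycle valued in $H$, where the needed equivariance holds.
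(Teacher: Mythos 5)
Your proof is correct and follows essentially the same route as the paper: twist the split Cox endomorphism algebra $A_\Split$ by a cocycle representing $X$, using the action of the automorphism group on $A_\Split$ furnished by Theorem~\ref{thm:twoLines}, and then observe that the induced action on $Z(A_\Split)$ factors through $J_\Split$ (inner automorphisms fix the center), so that $Z(A)$ depends only on the image of the class of $X$ in $H^1(k,J_\Split)$, which is precisely its neutralization class by Theorem~\ref{thm:mainSquare}.

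One point of comparison: the paper's proof simply asserts that the morphism $\GL_1(A_\Split) \to \Aut(X_\Split)$ is $\Aut(X_\Split)$-equivariant and twists. As you observe, this equivariance is problematic on the unipotent radical: the image of $\GL_1(A_\Split)$ is not normalized by $U$ when the action of $\GL_1(A_\Split)$ on $U$ is nontrivial, so equivariance genuinely fails for elements of $U$. Your explicit reduction, via Proposition~\ref{prop:ignoreU}, to cocycles valued in the Levi factor $(\GL_1(A_\Split)/S)\rtimes J$ --- where the needed equivariance does hold --- is exactly the repair that the paper's argument implicitly relies on. So your write-up is, if anything, more careful than the paper's own proof on this point; the remainder (identifying $Z(A_\Split)$ with the split \'etale algebra on $\Lambda$ and matching the projection to $J$ with the retract defining neutralization classes) agrees with the paper's argument in substance.
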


We call the algebra $A$ in the proposition above the
\emph{Cox endomorphism algebra of $X$}, generalizing
Definition~\ref{def:CoxAlg}.

\begin{proof}
We use the fact that $X$ is a $k$-form of $X_\Split$.

Recall that the algebra $A_\Split$ associated to $X_\Split$
comes with a map $\GL_1(A_\Split) \to \Aut(X_\Split)$.
From Theorem~\ref{thm:twoLines}, we have a morphism
\[ \Aut(X_\Split) \to \Aut(A_\Split) \]
since $S_\Split$ is a subgroup of
the group $\GL_1(Z(A_\Split))$.
Thus the morphism $\GL_1(A_\Split) \to \Aut(X_\Split)$ is
$\Aut(X_\Split)$-equivariant, and we obtain the
desired algebra $A$ and map by twisting.

By composition, there is also a map
$\Aut(X_\Split) \to \Aut(Z(A_\Split))$
which factors through $J_\Split$ since 
$\GL_1(A_\Split)$ centralizes $Z(A_\Split)$.
Thus, the induced morphism
\[ H^1(k, \Aut(X_\Split)) \to H^1(k,\Aut(Z(A_\Split))) \]
then factors through $H^1(k,J_\Split)$.
Thus, the isomorphism class of $Z(A)$ depends only on the neutralization
class of $X$.
\end{proof}

\begin{example}[Severi-Brauer varieties] \label{ex:SBalgebra}
Given a central simple algebra $A$, one can construct the Severi-Brauer
variety $X$ associated to $A$.
Proposition~\ref{prop:assocA} simply reverses this process.
In the case of Severi-Brauer varieties,
the isomorphism class of the toric variety $X$ is
completely detemined by the isomorphism class of its Cox
algebra $A$.
\end{example}

\begin{example}[del Pezzo of degree $6$] \label{ex:DP6}
Let $X$ be the split del Pezzo surface of degree $6$.  As a toric
variety, its effective $T$-invariant divisors are spanned by
$6$ elements which correspond to the blow ups of three
points on $\bbP^2$ and the strict transforms of the lines between them.

There is a common convention for the coordinates of the rays of this surface.
Using this convention, the map $\cox{N} \to N$ is given by
\[
\begin{pmatrix}
1 & 0 & -1 & -1 & 0 & 1 \\
0 & 1 & -1 & 0 & -1 & 1 \\
\end{pmatrix} \ .
\]
By duality we obtain a map
$\cox{M} \to \Pic(X)$ given by
\[
\begin{pmatrix}
0 & 0 & 0 & 1 & 1 & 1\\
1 & 0 & 0 & 0 & -1 & -1\\
0 & 1 & 0 & -1 & 0 & -1\\
0 & 0 & 1 & -1 & -1 & 0\\
\end{pmatrix}
\]
where we denote the ordered basis of $\Cl(\overline{X}) \simeq \Pic(X)$
by $H$, $E_1$, $E_2$, $E_3$.

Note that as all $6$ rays have different images in $\Cl(\overline{X})$,
all the spaces $V_\lambda$ are $1$-dimensional.
Thus, the Cox endomorphism algebra $A$ is an \'etale algebra and thus equal to
its center.
By Proposition~\ref{prop:assocA},
the algebras associated to the $k$-forms of $X$ are determined only by
the neutralization class.

However, not all forms of $X$ are neutral.
Thus, the Cox endomorphism algebras $A$ do not suffice to distinguish $k$-forms of
$X$ in this case.
However, in~\cite{Blu10Del-Pezzo}, a different pair of separable algebras
is associated to each $k$-form of $X$ which do distinguish all
isomorphism classes of $X$.
We investigate this pheonomenon in Section~\ref{sec:approximation} below;
in particular, see Example~\ref{ex:blunk}.
\end{example}

\begin{remark}[Maximal \'etale algebras]
For Severi-Brauer varieties, there is a bijective correspondence between
maximal \'etale subalgebras of $A$ and maximal tori of $\Aut(X)$.
This holds for toric varieties when the unipotent radical $U$ is
trivial.
Conditions for when the hypothesis $U=0$ holds are investigated
in~\cite{Nil06Complete}.
\end{remark}

In Cox's original paper, the Cox ring was only defined for split toric
varieties (over $\bbC$).
Cox rings can be extended to more general varieties as a
ring structure on the direct sum of the vector spaces $H^0(X,\calO_X(D))$
for all $D \in \Cl(X)$.
This definition makes sense when the base field is not algebraically closed,
but we lose the connection with universal torsors.
An alternate definition was proposed in \cite{DerPie14Cox-rings}, which
is better suited to applications over non-closed fields.
However, like universal torsors,
these Cox rings do not always exist, nor are they necessarily unique.
The following is in the spirit of the latter definition.

Recall that in the split case, the \emph{characteristic space} of a
toric variety $X$ is the $S$-invariant open subset $\cox{X}$ of $V$
along with the categorical quotient map $\psi : \cox{X} \to X$.
We define a \emph{characteristic space} of a (not necessarily split)
toric variety $X$ as a subvariety $\cox{X}$ of an affine space $V$ along
with a map $\psi : \cox{X} \to X$ which coincides with the
characteristic space of the split toric variety over the separable
closure.
When $\cox{X}$ does exist, we define $\tAut(X)$ to be normalizer of $S$ in
$\Aut(\cox{X})$ as before.

\begin{prop}
There exists a characteristic space $\psi : \cox{X} \to X$ if
and only if $X$ is neutral.
When $\cox{X}$ exists, we have a decomposition
\[ \tAut(X) \simeq U \rtimes \GL_1(A) \rtimes J \]
as in the split case.
\end{prop}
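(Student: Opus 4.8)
The proposition has two independent assertions, and I would handle them separately. The first is the equivalence ``a characteristic space $\cox{X}$ exists $\iff$ $X$ is neutral.'' The second is that, when $\cox{X}$ exists, the semidirect product decomposition $\tAut(X) \simeq U \rtimes \GL_1(A) \rtimes J$ from the split case persists.

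For the existence equivalence, the guiding principle throughout this section is that every object in play is a $k$-form of the corresponding split object, obtained by twisting with a cocycle in $Z^1(k,\Aut(X_\Split))$. So I would argue as follows. By Theorem~\ref{thm:mainSquare}, the class of $X$ in $H^1(k,\Aut(X))$ lands in the neutralization class determined by its image in $H^1(k,J)$, and $X$ is neutral precisely when its class lies in the image of the canonical section $H^1(k,J) \hookrightarrow H^1(k,\Aut(X))$; equivalently, $X$ is obtained by twisting $X_\Split$ by a cocycle $c$ factoring through $J_\Split$. The characteristic space $\cox{X}_\Split$ carries an action of $\tAut(X_\Split)$, and the quotient map $\psi_\Split : \cox{X}_\Split \to X_\Split$ is equivariant for the surjection $\tAut(X_\Split) \to \Aut(X_\Split)$. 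Thus, for any cocycle $c$ valued in $\Aut(X_\Split)$ that \emph{lifts} to a cocycle valued in $\tAut(X_\Split)$, I can twist the entire diagram and descend $\psi_\Split$ to a genuine $k$-morphism $\psi : \cox{X} \to X$ with $\cox{X} \subset V$ an affine-space subvariety coinciding with the split one over $k_s$. The key point, then, is: a class in $H^1(k,\Aut(X))$ lifts to $H^1(k,\tAut(X))$ if and only if it is neutral. The forward direction ($\cox{X}$ exists $\Rightarrow$ neutral) comes from observing that the existence of $\cox{X}$ with its $S$-action furnishes exactly such a lift, so the class lies in the image of $H^1(k,\tAut(X)) \to H^1(k,\Aut(X))$, which by Proposition~\ref{prop:ignoreU} and Lemma~\ref{lem:AoverSsimplify} is identified with the image of $H^1(k,J) \hookrightarrow H^1(k,\Aut(X))$ --- i.e.\ the neutral classes. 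The reverse direction uses that a neutral class, by the canonical section, is represented by a cocycle through $J$, which then lifts to $\tAut(X)$ because the section $J \to W \to \tAut(X)$ (from Theorem~\ref{thm:twoLines}) provides a compatible lift; twisting $\psi_\Split$ by this cocycle produces the desired $\cox{X}$.

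For the decomposition, once $\cox{X}$ exists I have $\tAut(X)$ defined as the normalizer of $S$ in $\Aut(\cox{X})$, and it is the twist ${}_c\tAut(X_\Split)$ by the $J$-valued cocycle $c$ constructed above. Since twisting is an exact functor on algebraic groups that preserves semidirect product structure whenever the cocycle respects the relevant subgroups, I would verify that $c$ stabilizes each factor in $U \rtimes \GL_1(A) \rtimes J$. The factor $\GL_1(A)$ twists to $\GL_1({}_cA)$ by Proposition~\ref{prop:cGLisGLc}; the unipotent radical $U$ is characteristic and hence preserved; and $J$ is normalized by the $J$-action on itself. Therefore the twisted group inherits the decomposition $\tAut(X) \simeq U \rtimes \GL_1(A) \rtimes J$ directly from the split case.

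\textbf{Main obstacle.} The hard part will be the existence direction, specifically making precise the claim that the lift of a class to $H^1(k,\tAut(X))$ is \emph{equivalent} to neutrality and that such a lift actually produces a characteristic space defined over $k$ (rather than merely over $k_s$). The subtlety is that $S$ need not be central in $\tAut(X)$, so I must confirm that the $S$-action on the twisted $\cox{X}$ descends compatibly with $\psi$, and that the affine-space ambient $V$ and the open condition cutting out $\cox{X}$ are themselves $\Gamma_k$-stable under the chosen cocycle. The cleanest route is to package everything through the equivariance of $\psi_\Split$ under $\tAut(X_\Split)$ and invoke effectivity of descent for the (quasiprojective) pieces, reducing the entire matter to the cohomological lifting criterion established via Theorem~\ref{thm:mainSquare} and Lemma~\ref{lem:AoverSsimplify}.
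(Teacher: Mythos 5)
Your proposal is correct, and its second half---the reverse implication and the semidirect-product decomposition---follows essentially the paper's own route: one picks a cocycle $c \in Z^1(k,J_\Split)$ with $X \simeq {}_c(X_\Split)$ (possible precisely because $X$ is neutral, by Theorem~\ref{thm:mainSquare}), uses the fact that $J_\Split$ maps into $\tAut(X_\Split)$ stabilizing $S_\Split$ to twist $V_\Split$, $\cox{X}_\Split$ and $\psi_\Split$, and observes that the $J_\Split$-equivariance of the splitting $J_\Split \to \tAut(X_\Split)$ makes the twisted surjection $\tAut(X) \to J$ split again; your factor-by-factor check that twisting preserves $U$, $\GL_1(A)$ and $J$ is just another phrasing of the same point. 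Where you genuinely diverge is the forward implication. The paper argues geometrically: if $\cox{X} \subset V$ exists, then $X$ receives a dominant rational map from an affine space, hence has a Zariski-dense set of $k$-points, hence the open orbit of any torus action contains a rational point, so the corresponding torsor is trivial and $X$ is neutral. You argue cohomologically: the descent cocycle of the pair $(\cox{X},\psi)$ normalizes $S$ and therefore takes values in $\tAut(X_\Split)$, so the class of $X$ lies in the image of $H^1(k,\tAut(X)) \to H^1(k,\Aut(X))$, which by Proposition~\ref{prop:ignoreU} and Lemma~\ref{lem:AoverSsimplify} coincides with the image of $H^1(k,J) \hookrightarrow H^1(k,\Aut(X))$, i.e.\ with the neutral classes identified in Theorem~\ref{thm:mainSquare}. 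Both are valid, with different costs. Your route stays entirely inside the cohomological formalism and is uniform in the base field, but it leans on reading the $S$-action (equivalently, the normalizer property of the descent cocycle) as part of the characteristic-space data; this is the paper's intended reading---after all, $\tAut(X)$ is defined as the normalizer of $S$ in $\Aut(\cox{X})$---but your proof should state it explicitly, since the definition literally only demands that the pair $(\cox{X},\psi)$ coincide with the split one over $k_s$. The paper's geometric argument needs none of that extra structure, only dominance of the rational map $V \dashrightarrow X$, though its appeal to Zariski-density of rational points of affine space tacitly assumes $k$ infinite (over a finite field neutrality is automatic, e.g.\ by Lang's theorem, so nothing is lost).
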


\begin{proof}
Assume a characteristic space exists.  Then we have a
dominant rational map $V \to X$ where $V$ is an affine space.
Thus $X$ has a Zariski-dense set of $k$-points.  We conclude that $X$ is
neutral since any open $T$-orbit for any $T$-action must contain a
$k$-point.

Now, assume that $X$ is neutral.
By Theorem~\ref{thm:mainSquare}, we may choose a cocycle $c$
in $Z^1(K,J_\Split)$ such that $X \simeq {}_c(X_\Split)$.
Since $J_\Split$ maps to $\tAut(X_\Split)$ and leaves $S_\Split$ stable,
we may define an affine space $V := {}_c(V_\Split)$
containing a characteristic space
$\cox{X} := {}_c(\cox{X}_{\Split})$
both with $S$-actions.
We define the map $\psi \colon \cox{X} \to X$ as the twist
${}_c(\psi_\Split)$.

The splitting $J_\Split \to \tAut(X_\Split)$ is $J_\Split$-equivariant
and thus, when we twist by $c$, the morphism $\tAut(X) \to J$ splits
and we have the desired decomposition.
\end{proof}

\begin{remark}
Note that the map $\psi \colon \cox{X} \to X$ is not unique as an
$S$-scheme over $X$ --- even up to isomorphism.
Indeed, when $X$ is smooth, $\psi$ is a
universal torsor of $X$; their isomorphism classes are in bijection
with $H^1(k,S)$ (see \S 2 of~\cite{ColSan87La-descente}).
\end{remark}

\begin{remark}[Canonical torus]
When $X$ is neutral, we can define $\cox{T}$ with an action on $\cox{X}$
as in the split case.
In this case, there is a canonical choice
for the isomorphism class of the torus $\cox{T}$ (and thus $T$).
This follows from the fact that the morphism
$H^1(k,W_\Split) \to H^1(k,J_\Split)$ has a canonical section.
Specifically, as $A$ is a neutral separable $k$-algebra,
it is a product of matrix algebras
\[ A = M_{n_1}(F_1) \times \cdots \times M_{n_r}(F_r) \]
where $F_1, \ldots, F_r$ are separable field extensions of $k$
and $n_1, \ldots, n_r$ are positive integers.
There is a maximal \'etale subalgebra of $A$ of the form
\[ E = (F_1)^{n_1} \times \cdots \times (F_r)^{n_r} \ . \]
The canonical torus $\cox{T}$ is then $\GL_1(E)$.
\end{remark}

\begin{remark}[Restricted automorphism groups]
\label{rem:restrictAuto}
Let $X$ be a neutral toric variety.
Given a finite algebraic subgroup $I$ of $J$, the \emph{$I$-restricted
automorphism group of $X$}, denoted $\Aut_I(X)$, is the preimage of $I$
in $\Aut(X)$.
Note that, in particular, $\Aut_1(X) = \Aut(X)^\circ$.  The Galois
cohomology set $H^1(k,\Aut_1(X))$ can be interpreted as the set of
isomorphism classes of toric varieties $Y$ isomorphic to $X$
along with a fixed isomorphism $\Pic(Y) \simeq \Pic(X)$.
Note that if we drop the explicit isomorphism, we obtain the set
\[ H^1(k,\Aut_1(X)) / H^1(k,J) \]
which is the set of isomorphism classes of toric varieties $Y$ with a
neutralization isomorphic to $X$.
\end{remark}

\begin{example}[Products of projective spaces]
\label{ex:prodProj}
Let $X_\Split = (\bbP^{q-1})^n$ be a product of projective spaces
given two integers $q,n > 1$.
Here $\Pic(X_\Split) \simeq \bbZ^n$, $S \simeq \bbG_m^n$ and $J \simeq S_n$.
In this case, \eqref{eq:twoLines} is as follows:
\begin{equation*}
\xymatrix{
1 \ar@{->}[r] &
\bbG_m^n \ar@{->}[r] \ar@{=}[d] &
(\bbG_m^q)^n \rtimes (S_q \wr S_n) \ar@{->}[r] \ar@{->}[d] &
(\bbG_m^q/\bbG_m)^n \rtimes (S_q \wr S_n) \ar@{->}[r] \ar@{->}[d] &
1 \\
1 \ar@{->}[r] &
\bbG_m^n \ar@{->}[r] &
\GL_m \wr S_n \ar@{->}[r] &
\PGL_m \wr S_n \ar@{->}[r] &
1 \\
}
\end{equation*}
where $G \wr S_n$ is the wreath product $G^n \rtimes S_n$
for a group $G$.

Here the algebra $A_\Split$ is simply $M_q(k)^n$ and thus
the automorphism group $\Aut(X_\Split) \simeq \PGL_q \wr S_n$ of the
variety is isomorphic to $\Aut(A_\Split)$.
Thus, forms $X$ of $X_\Split$ correspond to forms $A$ of $A_\Split$.
The neutral forms $X$ correspond to neutral forms $A$, which,
in turn, correspond to \'etale $k$-algebras of degree $n$.

More geometrically, $k$-forms $X$ are products of Weil
restrictions
\[
\Weil{F_1/k}X_1 \times \cdots \times \Weil{F_r/k}X_r
\]
where each $X_i$ is a Severi-Brauer variety of dimension $q-1$ over
the field $F_i$.  When $X$ is neutral, each $X_i$ is a projective space.
\end{example}

\section{Non-abelian $H^2$}
\label{sec:non-abelianH2}

In this section, we extend the long exact sequences in Galois cohomology 
from Theorem~\ref{thm:twoLines} to $H^2$.  Unlike the situation for
Severi-Brauer varieties, the original sequences do not correspond to
central extensions, so we cannot use the ordinary abelian cohomology group
$H^2(k,S)$.

In \cite{Spr66Nonabelian} and
\S IV.4.2 of \cite{Gir71Cohomologie}, the ordinary sequence from Galois
cohomology is extended to a non-abelian version of $H^2$
(see also more recent work in \cite{Bor93Abelianization},
\cite{FliSchSuj98Grothendiecks} and \cite{Flo04Zero-cycles}).
Our applications are less ambitious, so we have the luxury
of a simpler exposition.
Rather than consider the abelian group $H^2(k,S)$,
we consider $H^2(k,S \to J)$ which has the structure of
a set with a distinguished subset.
Here the notation for the ``coefficients'' is meant to suggest a
\emph{crossed module} as in, for example, \cite{Bre10Notes}.

Let $S$ be a smooth split $k$-group of multiplicative type and let $J$ be a
finite subgroup of $\Aut(S)$.  Note that $J$ is a constant
group.

We have a natural action of $J$ on each cocycle
$c \in Z^1(k,J)$ via $j(c)_\sigma := j c_\sigma j^{-1}$
for all $\sigma \in \Gamma_{k}$.
Two cycles are cohomologous if and only if they are in the same orbit
under this action (essentially by definition).

Given a cocycle $c \in Z^1(k,J)$ and an element $j \in J$ we have
an isomorphism
\[ j_* \colon H^2(k,{}_cS) \to H^2(k,{}_{j(c)}S) \]
which is defined on cocycles $s \in Z^2(k,{}_cS)$ via
\[ j_*( s)_{\sigma,\tau} := j(s_{\sigma,\tau})
\textrm{ for all } \sigma,\tau \in \Gamma_{k} . \]
One checks that the image cocycle sits in $Z^2(k,{}_{j(c)}S)$
as expected and that cohomology classes are preserved.

We now define the main object of study in this section:

\begin{defn} \label{def:H2}
We define the following set:
\[
H^2(k,S \to J) := \left.
\left(\coprod_{c \in Z^1(k,J)} H^2(k,{}_cS) \right) \right/ J \ .
\]
We define the set of \emph{neutral elements}
as the subset of $H^2(k,S \to J)$ containing
the trivial elements in each component $H^2(k,{}_cS)$.
This endows the set $H^2(k,S \to J)$ with the structure of a set
with a distinguished subset.
The image of the trivial element from $H^2(k,S)$ will be referred to as
the \emph{trivial element} of $H^2(k,S \to J)$.
\end{defn}

Now we define the connecting homomorphism.
Suppose $\cox{G}$ and $G$ are algebraic groups sitting in an exact
sequence
\begin{equation} \label{eq:exampleSeq}
1 \to S \to \cox{G} \to G \to 1
\end{equation}
such that the conjugation action of $G$ on $S$ induces a surjection
$\pi : G \to J$.

Let $a$ be a cocycle in $Z^1(k,G)$.
Let $\cox{a} : \Gamma_k \to \cox{G}(k_s)$ be a continuous function
lifting $a$ (which always exists since $\Gamma_k$ is profinite
and $G(k_s), \cox{G}(k_s)$ have the discrete topology).
Define a function
$\Delta \cox{a} \colon \Gamma_k \times \Gamma_k \to \cox{G}(k_s)$
via
\[ (\Delta \cox{a})_{\sigma,\tau} :=
\cox{a}_{\sigma} ({}^{\sigma} \cox{a}_{\tau}) (\cox{a}_{\sigma \tau})^{-1} \]
for all $\sigma, \tau \in \Gamma_k$.
We will see that $\Delta \cox{a}$ is a cocycle in $Z^2(k,{}_{\pi(a)}S)$,
and that this gives rise to a well-defined map
\[ \delta \colon H^1(k,G) \to H^2(k, S \to J) \]
which we call the \emph{connecting map}.

\begin{lem} \label{lem:agreesWithSpringer}
The connecting map
\[ \delta \colon H^1(k,G) \to H^2(k, S \to J) \]
is well-defined and canonically isomorphic as sets with distinguised
subsets to the map
\[ \delta^1 \colon H^1(k, \cox{G},S)
\to H^2(k, S \ \mathrm{rel} \ \cox{G}) \]
from 1.20 of~\cite{Spr66Nonabelian}.
\end{lem}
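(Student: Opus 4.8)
The plan is to show that the set $H^2(k, S \to J)$ defined in Definition~\ref{def:H2} coincides, as a set with a distinguished subset, with Springer's non-abelian $H^2(k, S \ \mathrm{rel} \ \cox{G})$, and that the connecting map $\delta$ constructed via the cochain $\Delta \cox{a}$ agrees with Springer's $\delta^1$. The key observation is that both constructions are built from the \emph{same} raw data: a cocycle $a \in Z^1(k,G)$, its image $\pi(a) \in Z^1(k,J)$ which twists $S$, and a set-theoretic lift $\cox{a}$ whose failure to be a cocycle is measured by $\Delta \cox{a}$. So the task is less about proving something deep and more about carefully matching the two bookkeeping schemes.

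First I would verify that $\Delta \cox{a}$ is genuinely a cocycle landing in $Z^2(k, {}_{\pi(a)}S)$. For this, note that since $a$ is a cocycle in $G$, the element $(\Delta \cox{a})_{\sigma,\tau}$ maps to the identity in $G$, hence lies in $S(k_s)$; the twisting by $\pi(a)$ arises because conjugation by $\cox{a}_\sigma$ acts on $S$ through $\pi(a)_\sigma \in J$. A direct expansion of the associativity relation for $\cox{a}$ shows $\Delta \cox{a}$ satisfies the $2$-cocycle condition in the twisted module ${}_{\pi(a)}S$. Next I would check independence of choices: replacing $\cox{a}$ by another lift changes $\Delta \cox{a}$ by a coboundary in $Z^2(k,{}_{\pi(a)}S)$, so the class in $H^2(k,{}_{\pi(a)}S)$ is well-defined. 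Changing $a$ within its cohomology class in $H^1(k,G)$ replaces $\pi(a)$ by a cohomologous cocycle $j(\pi(a))$ and carries the $H^2$-class by exactly the isomorphism $j_*$ from the excerpt; since we have quotiented by the $J$-action in Definition~\ref{def:H2}, the image in $H^2(k,S\to J)$ is unchanged. This establishes that $\delta$ is well-defined.

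The comparison with Springer's formulation is then a matter of translating definitions. Springer's $H^1(k,\cox{G},S)$ is the set of $S$-conjugacy-twisted $1$-cocycles with values in $\cox{G}$, which under the projection $\cox{G} \to G$ corresponds precisely to $H^1(k,G)$; and his $H^2(k,S \ \mathrm{rel} \ \cox{G})$ organizes twisted $2$-cocycles with the same identification of twists by $\pi$-images and the same $J$-action that I have built into Definition~\ref{def:H2}. I would make the identification of the underlying sets explicit, then check that his connecting operator $\delta^1$ is defined by the identical formula $\cox{a}_\sigma({}^\sigma \cox{a}_\tau)(\cox{a}_{\sigma\tau})^{-1}$, so that the two maps literally agree on representatives. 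Finally I would confirm that the distinguished subsets match: the neutral elements of $H^2(k,S\to J)$, namely the trivial classes in each component $H^2(k,{}_cS)$, correspond exactly to Springer's neutral (or \emph{liable}) classes, those $2$-cocycles admitting a global lift to a genuine cocycle in $\cox{G}$.

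The main obstacle will be the bookkeeping in the comparison step rather than any substantive mathematics: Springer works with a somewhat different formalism for twisted cocycles and his notion of equivalence, so the real work is setting up a dictionary between his conventions and ours and verifying that equivalence of $2$-cocycles on his side corresponds term-by-term to equality of classes in $\coprod_c H^2(k,{}_cS)$ modulo the $J$-action. In particular, I would need to check that the collapsing by $J$ in Definition~\ref{def:H2} is precisely the equivalence relation Springer imposes to account for the choice of lift of $a$ through $\pi$, and that no finer or coarser identification is hidden in his definition. Once the dictionary is fixed, the agreement of the two connecting maps and of the two distinguished subsets is forced by the coincidence of the defining formulas.
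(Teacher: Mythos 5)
Your proposal is correct and follows essentially the same route as the paper: both establish the lemma by a term-by-term dictionary with Springer's formalism, identifying his cocycle pair $(f,g)$ with the twisting cocycle $c=\pi_*(a)$ and the function $\Delta\cox{a}$, and matching the action of the normalizer $N=\cox{G}$ of $S$ with the $J$-action built into Definition~\ref{def:H2}. The only organizational difference is that you verify well-definedness of $\delta$ directly (cocycle condition, independence of lift, equivariance under change of representative) before making the comparison, whereas the paper runs the comparison first and then observes that well-definedness follows for free because Springer's $\delta^1$ is already known to be well-defined.
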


\begin{proof}
We will prove the isomorphism with Springer's map;
the fact that our construction is well-defined then follows for free.

Let $a$ be a cocycle in $Z^1(k,G)$ and let $\cox{a} : \Gamma_k \to
\cox{G}(k_s)$
be a continuous function lifting $a$.
In Springer's notation, the set $Z^1(k, \cox{G}, S)$ is simply the set
of continuous functions $b : \Gamma_k \to \cox{G}(k_s)$ lifting cocycles
in $Z^1(k,G)$.  We may simply assume $\cox{a} = b$.
To each cocycle $b$ there is an associated
$2$-cocycle $(f,g)$ for $H^2(k, S \ \mathrm{rel} \ \cox{G})$
where
\begin{align*}
f_{\sigma}(s) &= b_{\sigma} ({}^{\sigma}\! s) (b_{\sigma})^{-1}\\
g_{\sigma,\tau} &=
b_{\sigma} ({}^{\sigma} b_{\tau}) (b_{\sigma \tau})^{-1}
\end{align*}
for $s \in S(k_s)$ and $\sigma, \tau \in \Gamma_k$.
Comparing this to Definition~\ref{def:H2},
we will see that $f$ corresponds to the choice of cocycle $c$
and $g$ corresponds to a cocycle in $Z^2(k,{}_cS)$.

This cocycle $(f,g)$ sits
naturally inside $Z^2(k,S,\lambda_a)$ where $\lambda_a$ is the
$\Gamma_k$-kernel induced by the automorphisms $f_{\sigma}$ of $S$.
Since $S$ is abelian, the kernels $\lambda_a$ are honest continuous maps
$\Gamma_k \to \Aut(S(k_s))$ (where $\Aut(S(k_s))$ has the discrete topology).
Thus, the kernel $\lambda_a$ coincides with the cocycle
$c=\pi_*(a)$ in $Z^1(k,J)$.
The cocycle $g$ is equal to $\Delta b$ as functions
and they are then $2$-cocycles in $Z^2(k,{}_cS)$.

The normalizer $N$ of $S$ in $\cox{G}$ is $\cox{G}$ itself.
Given an element $n \in N$ we have the image $j=\pi(n) \in J$.
The induced action of $n$ on the set of $\Gamma_k$-kernels of $S$
coincides with the induced action of $j$ on $Z^1(k,J)$ defined above,
and the induced action of $n$ on the pairs $(f,g)$ coincides with the
action of $j$ on $2$-cocycles in the sets $Z^2(k,{}_cS)$.
Thus the constructions agree.
\end{proof}

Now, suppose that $\cox{G} = R \rtimes J$ where $R$
is an algebraic group containing $S$ as a $J$-stable central closed
subgroup.
We have a $J$-equivariant exact sequence
\[ 1 \to S \to R \to R/S \to 1 \]
corresponding to a central extension.

\begin{lem} \label{lem:altH2desc}
Let $\xi$ be a set of cocycle representatives for $H^1(k,J)$.
The connecting map
\[ \delta \colon H^1(k,G) \to H^2(k, S \to J) \]
is canonically isomorphic to the map
\[
\coprod_{c \in \xi} \left(
\frac{H^1(k,{}_c(R/S))}{H^0(k,{}_cJ)}
\right)
\to
\coprod_{c \in \xi} \left(
 \frac{H^2(k,{}_cS)}{H^0(k,{}_cJ)}
\right)
\]
induced from the usual connecting maps
$H^1(k,{}_c(R/S)) \to H^2(k,{}_cS)$.
\end{lem}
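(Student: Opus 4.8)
The plan is to reduce everything to the already-established structural decomposition of $\cox{G} = R \rtimes J$ together with the fibered description of $H^1(k,-)$ of a semidirect product from Lemma~\ref{lem:semidirectGC}, and to verify that the connecting map defined via lifts in Lemma~\ref{lem:agreesWithSpringer} is compatible with this fibration. Concretely, the source $H^1(k,G)$ (where $G = \cox{G}/S = (R/S)\rtimes J$) decomposes, by Lemma~\ref{lem:semidirectGC} applied to the sequence $(R/S)\rtimes J$, as $\coprod_{c\in\xi} H^1(k,{}_c(R/S))/H^0(k,{}_cJ)$, since the projection $(R/S)\rtimes J \to J$ induces the indexing over $\xi$ and the fibre over $c$ is the quotient of $H^1(k,{}_c(R/S))$ by $H^0(k,{}_cJ)$. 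The target $H^2(k,S\to J)$ is by Definition~\ref{def:H2} the quotient $\bigl(\coprod_{c\in Z^1(k,J)} H^2(k,{}_cS)\bigr)/J$; I would first observe that passing to a set $\xi$ of representatives of $H^1(k,J)$ rewrites this as $\coprod_{c\in\xi} H^2(k,{}_cS)/H^0(k,{}_cJ)$, because the $J$-action permutes the components according to the $J$-action on $Z^1(k,J)$ (orbits being cohomology classes) and the stabilizer of a representative cocycle $c$ acting on $H^2(k,{}_cS)$ is exactly $H^0(k,{}_cJ)$.

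First I would fix a set $\xi$ of cocycle representatives and, for each $c\in\xi$, choose the lift $\cox{a}$ of a cocycle $a\in Z^1(k,G)$ projecting to $c$ so that its $J$-component is literally $c$; since $\cox{G}=R\rtimes J$ this is possible, and the $R$-part of $\cox{a}$ then represents a class in $H^1(k,{}_c(R/S))$ after passing to the quotient. With this choice the formula $(\Delta\cox{a})_{\sigma,\tau} = \cox{a}_\sigma({}^\sigma\cox{a}_\tau)(\cox{a}_{\sigma\tau})^{-1}$ from the definition of $\delta$ collapses, on the fibre over $c$, to exactly the cocycle computing the ordinary connecting map $H^1(k,{}_c(R/S)) \to H^2(k,{}_cS)$ for the twisted central extension $1\to {}_cS \to {}_cR \to {}_c(R/S)\to 1$. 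This is where the hypothesis that $S$ is central in $R$ (and $J$-stable) is essential: centrality guarantees that the $R$-factor and the $J$-factor of the lift do not interfere, so that the twisted sequence is genuinely a central extension and the abelian connecting map is defined.

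The key compatibility to check is that the two $H^0(k,{}_cJ)$-quotients match up under $\delta$: the $H^0(k,{}_cJ)$-action on the source (coming from Lemma~\ref{lem:semidirectGC}) must be carried by $\delta$ to the $H^0(k,{}_cJ)$-action on the target $H^2(k,{}_cS)$ (coming from Definition~\ref{def:H2} via the residual $J$-action on each component). I would verify this by noting that an element of $H^0(k,{}_cJ)$ is represented by some $n\in {}_cJ(k_s)^{\Gamma_k}$ which, acting by conjugation on a lift, modifies the $R$-part by an inner twist while acting on ${}_cS$ by the corresponding automorphism; the naturality of the abelian connecting map under this automorphism gives precisely the equivariance. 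Having matched source, target, and the two quotienting group actions fibrewise, I would conclude that $\delta$ is the disjoint union over $c\in\xi$ of the induced maps on quotients, which is the claimed description. The main obstacle I anticipate is the bookkeeping in this last step: keeping the $J$-action from Definition~\ref{def:H2} (which permutes \emph{all} of $Z^1(k,J)$) synchronized with the residual $H^0(k,{}_cJ)$-action on a single component after restricting to representatives $\xi$, and checking that the identification of $\delta$ with Springer's map from Lemma~\ref{lem:agreesWithSpringer} really does send the lift-based formula to the abelian connecting cocycle on each fibre without introducing a nontrivial $\Gamma_k$-kernel — but centrality of $S$ in $R$ makes that kernel trivial on each fibre, which is exactly what rescues the computation.
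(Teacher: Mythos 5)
Your proposal is correct and follows essentially the same route as the paper's proof: decompose the source via Lemma~\ref{lem:semidirectGC}, decompose $H^2(k,S\to J)$ by observing that $J$-orbits on $Z^1(k,J)$ are cohomology classes with stabilizer $H^0(k,{}_cJ)$, and match the connecting maps fibrewise by choosing a lift whose $J$-component is literally the representative cocycle $c$ (the paper writes this as $b_\sigma = v_\sigma c_\sigma$ and computes $\Delta v = \Delta b$). Your extra attention to the $H^0(k,{}_cJ)$-equivariance is a fine elaboration of what the paper leaves implicit.
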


\begin{proof}
We have the decomposition of $H^1(k,G)$ from 
Lemma~\ref{lem:semidirectGC}.

The orbits of $J$ in $Z^1(k,J)$ are precisely the
cohomology classes.
Thus, the images of the maps $H^2(k,{}_cS) \to H^2(k,S \to J)$
over all cocycles $c \in \xi$ are mutually disjoint and jointly
surjective.
For a particular cocycle $c \in \xi$, an element $j \in J$
fixes $c$ if and only if $j$ is in $H^0(k,{}_cJ)$.
Thus, the image of the map $H^2(k,{}_cS) \to H^2(k,S \to J)$ is simply
the quotient by $H^0(k,{}_cJ)$.
This establishes the decomposition of $H^2(k,S\to J)$.

It remains to prove that the connecting maps agree.
Fix a cocycle $c \in \xi$.
Let $u$ be a cocycle in $Z^1(k,{}_c(R/S))$ and
let $a_\sigma = u_\sigma c_\sigma$ be a corresponding cocycle in
$Z^1(k,G)$.
The cocycles $u$ and $a$ map to the same element of $H^1(k,G)$
in the decomposition from Lemma~\ref{lem:semidirectGC}.
Select a lift $v: \Gamma_k \to {}_cR(k_s)$ of $u$ and note that
$b_\sigma = v_\sigma c_\sigma: \Gamma_k \to \cox{G}(k_s)$
is a lift of $a_\sigma$.

The cocycle $\Delta v$ in $Z^2(k,{}_cS)$ is constructed via
\[ (\Delta v)_{\sigma,\tau}
:= (v_{\sigma}) ({}^{\sigma'}\! v_{\tau})
(v_{\sigma \tau})^{-1} \]
using the twisted action
${}^{\sigma'}\!s = (c_{\sigma}) ({}^{\sigma}\! s) (c_{\sigma})^{-1}$
for $s \in S(k_s)$ and $\sigma \in \Gamma_k$.
The computation
\begin{align*}
(\Delta v)_{\sigma, \tau}
&=
(v_{\sigma})
[(c_{\sigma}) ({}^{\sigma} v_{\tau}) (c_{\sigma})^{-1}]
(v_{\sigma \tau})^{-1}\\
&=
(v_{\sigma}) (c_{\sigma})
({}^{\sigma} v_{\tau}) [({}^{\sigma}\! c_{\tau})
(c_{\sigma \tau})^{-1}]
(v_{\sigma \tau})^{-1}\\
&=
(b_{\sigma})
({}^{\sigma} b_{\tau})
(b_{\sigma \tau})^{-1}
=
(\Delta b)_{\sigma, \tau} 
\end{align*}
shows that this has the desired image in $H^2(k,S \to J)$.
\end{proof}

The above shows that there is a surjective map
$H^2(k,S\to J) \to H^1(k,J)$ with a unique neutral element in each
fiber.
Given an element $\alpha$ in the set $H^2(k,S \to J)$,
we define its \emph{neutral class} as the fiber of this map and we
define the \emph{neutralization} of $\alpha$ as the unique neutral
element therein.

Let $A$, $B$, $C$ respectively be sets with distinguished subsets
$A'$, $B'$, $C'$ respectively.  We say that the composition $g
\circ f$ of functions $f : A \to B$, $g : B \to C$ is \emph{exact} if
$\im(f) = g^{-1}(C')$.  We can now state the main theorem of this
section.

\begin{thm} \label{thm:H2main}
Let $X$ be a split projective toric variety
satisfying Hypothesis~\ref{assume}.
Applying Galois cohomology to the sequence \eqref{eq:twoLines}, we
extend \eqref{eq:mainSquare} to the commutative diagram
\begin{equation} \label{eq:mainSquareWithH2}
\xymatrix{
H^1(k, W) \ar@{^{(}->}[r] \ar@{->>}[d] &
H^1(k, T \rtimes W) \ar@{->}[r] \ar@{->>}[d]&
H^2(k, S \to J) \ar@{=}[d] \\
H^1(k, J)\ar@{^{(}->}[r] &
H^1(k, \Aut(X)) \ar@{^{(}->}[r] &
H^2(k, S \to J)\\
}
\end{equation}
where the rows are exact sequences of sets with a distinguished subset
and the bottom right horizontal map is injective.
\end{thm}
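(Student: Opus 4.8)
The plan is to construct both horizontal connecting maps from the single coboundary $\delta$ built in \eqref{eq:exampleSeq}, applied to the two rows of \eqref{eq:twoLines}, and then to extract commutativity, exactness, and injectivity from the structural results already established. First I would check that both rows meet the hypotheses needed to define $\delta$: in the top row $T \rtimes W$ acts on $S$ through the surjection $W \to J$ supplied by Theorem~\ref{thm:twoLines}, while in the bottom row $\Aut(X) \to \Aut(S)$ is surjective onto $J$ by the very definition of $J$. This produces the maps $H^1(k, T\rtimes W) \to H^2(k,S\to J)$ and $H^1(k,\Aut(X)) \to H^2(k,S\to J)$, and the right-hand vertical arrow is the identity of $H^2(k,S\to J)$.

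For commutativity of the right square I would invoke the functoriality of $\delta$ (equivalently, of Springer's map via Lemma~\ref{lem:agreesWithSpringer}) with respect to the morphism of extensions given by the right square of \eqref{eq:twoLines}; since this morphism is the identity on $S$ and is compatible with the two surjections onto $J$, the two connecting maps agree after composing with $H^1(k,T\rtimes W) \to H^1(k,\Aut(X))$. The left square is exactly \eqref{eq:mainSquare} of Theorem~\ref{thm:mainSquare}. For exactness at each middle term I would use that Springer's sequence $H^1(k,\cox{G}) \to H^1(k,G) \to H^2(k,S\to J)$ is exact, i.e. the image of the first map is the preimage of the neutral subset, and then identify $H^1(k,\cox{T}\rtimes W) \simeq H^1(k,W)$ and $H^1(k,\tAut(X)) \simeq H^1(k,J)$ precisely as in the proof of Theorem~\ref{thm:mainSquare}, using Lemma~\ref{lem:AoverSsimplify} and Proposition~\ref{prop:ignoreU}.

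The substantive point is the injectivity of the bottom-right map. Discarding $U$ via Proposition~\ref{prop:ignoreU} and applying the explicit description of Lemma~\ref{lem:altH2desc}, I would write this map as a disjoint union over cocycle representatives $c \in \xi$ of the maps
\[
\frac{H^1(k, \GL_1({}_cA)/{}_cS)}{H^0(k,{}_cJ)}
\longrightarrow
\frac{H^2(k,{}_cS)}{H^0(k,{}_cJ)}
\]
induced by the ordinary connecting maps of the central extensions $1 \to {}_cS \to \GL_1({}_cA) \to \GL_1({}_cA)/{}_cS \to 1$, where ${}_c\GL_1(A) = \GL_1({}_cA)$ by Proposition~\ref{prop:cGLisGLc} and ${}_cS$ is central. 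As the components are mutually disjoint and sit over distinct classes in $H^1(k,J)$, it suffices to treat each $c$ separately. The key input is the standard twisting description of the fibers of a central coboundary: the fiber of $\delta$ over $\delta(\alpha)$ is the image, under the twisting bijection, of $H^1(k, {}_\alpha\GL_1({}_cA))$, and ${}_\alpha\GL_1({}_cA) = \GL_1({}_\alpha({}_cA))$ for the separable algebra ${}_\alpha({}_cA)$, so this set is trivial by Hilbert~90 (Proposition~\ref{prop:Hilbert90}); hence each fiber is a single point and the un-quotiented connecting map is injective.

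Finally I would descend to the quotients by $H^0(k,{}_cJ)$: if two classes have $\delta$-images in a common $H^0(k,{}_cJ)$-orbit, then the equivariance $\delta(j\cdot u) = j\cdot\delta(u)$ for $j \in H^0(k,{}_cJ)$, combined with the pointwise injectivity just established, forces the classes themselves into one orbit, so the quotient map is injective as well. I expect the main obstacle to be exactly this last step --- arranging the twisting identification of the fibers to be compatible with the $H^0(k,{}_cJ)$-action so that Hilbert~90 can be applied uniformly across the disjoint union and then transported through the quotients.
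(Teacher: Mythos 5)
Your proposal is correct and follows essentially the same route as the paper: exactness via Springer's result (Lemma~\ref{lem:agreesWithSpringer}/Proposition~1.28) together with the identifications from Lemma~\ref{lem:AoverSsimplify} and Proposition~\ref{prop:ignoreU}, and injectivity by fixing a cocycle $c \in Z^1(k,J)$, using the central extension $1 \to {}_cS \to \GL_1({}_cA) \to {}_c(\GL_1(A)/S) \to 1$, killing $H^1$ of all twists $\GL_1({}_d({}_cA))$ by Hilbert~90 via Proposition~\ref{prop:cGLisGLc}, and then passing to the $H^0(k,{}_cJ)$-quotients through Lemma~\ref{lem:altH2desc}. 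The only cosmetic difference is that you inline the proof of the fiber description of a central coboundary where the paper simply cites the Corollary in \S 5.7 of \cite{Ser02Galois}; also, the final equivariance step you flag as the main obstacle is immediate, since an injective equivariant map always induces an injection on orbit spaces.
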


Note that exact sequences for sets with distinguished subsets behave
differently from exact sequences of abelian groups.  Thus it is
perfectly reasonable to have an exact sequence consisting of two
injections.
In the bottom row of the above diagram, the exactness and injectivity
imply that 
the distinguished subset of $H^2(k,S\to J)$ is in bijection with $H^1(k,J)$.

\begin{proof}
The top row is the case where $R = \cox{T} \rtimes W^\circ$;
the bottom, where $R = \GL_1(A)$.
The exactness follows from Proposition~1.28 of \cite{Spr66Nonabelian}.
This can also be checked directly using
Lemmas~\ref{lem:semidirectGC}~and~\ref{lem:altH2desc}.
The only statement that remains to be proved is the injectivity of the
map $H^1(k, \Aut(X)) \to H^2(k, S \to J)$.

Let $c$ be a cocycle representative of an element in $H^1(k,J)$.
Consider the central extension
\[ 1 \to {}_cS \to {}_c\GL_1(A) \to {}_c(\GL_1(A)/S) \to 1 \ . \]
Since this is a central extension, we obtain an exact sequence
of pointed sets
\[
H^1(k,{}_c\GL_1(A)) \to H^1(k,{}_c(\GL_1(A)/S)) \to H^2(k,{}_cS) \ .
\]
Note that ${}_c(\GL_1(A)/S)$ acts on ${}_c\GL_1(A)\simeq \GL_1({}_cA)$
by algebra automorphisms of ${}_cA$.
Thus, for any cocycle $d$ in
$Z^1(k,{}_c(\GL_1(A)/S))$, we see that
$H^1(k,{}_d({}_c\!\GL_1(A))) \simeq H^1(k,\GL_1({}_d({}_cA)))$
is trivial by Hilbert 90.
By the Corollary in \S 5.7 of \cite{Ser02Galois},
we conclude the map $H^1(k,{}_c(\GL_1(A)/S)) \to H^2(k,{}_cS)$ is injective.
Injectivity is preserved after taking quotients by $H^0(k,{}_cJ)$,
and the result follows from Lemma~\ref{lem:altH2desc}.
\end{proof}

\begin{remark}
Suppose $X$ and $X'$ are toric varieties with fixed torus actions.
Theorem~\ref{thm:H2main} allows one to determine whether the underlying
varieties are isomorphic by considering the map
$H^1(k, T \rtimes W) \to H^2(k, S \to J)$
instead of $H^1(k, T \rtimes W) \to H^1(k,\Aut(X))$.
The former involves only the cohomology of abelian and finite
constant groups, which may be easier to use in some applications.
This generalizes the common trick where one distinguishes Severi-Brauer
varieties (or central simple algebras) by their Brauer classes rather
than by their classes in $H^1(k,PGL_n)$.
\end{remark}

\begin{remark}(Torsors)
\label{rem:torsors}
Note that Lemma~\ref{lem:altH2desc} allows us to easily describe the set
$H^2(k,S \to J)$ using torsors instead of $1$-cocycles.
Indeed, if $\xi$ is a collection of torsors representing elements of
$H^1(k,J)$ then we have
\[
H^2(k,S \to J) \simeq
\coprod_{T \in \xi} \left(
 \frac{H^2(k,{}^TS)}{H^0(k,{}^TJ)}
\right) \ .
\]
Note that this agrees with
Proposition~4.2.5(ii)~of~\cite{Gir71Cohomologie}.
\end{remark}

\begin{remark}(Restriction)
One can define a restriction map
\[ \Res_k^K \colon H^2(k,S\to J) \to H^2(K,S_K \to J_K) \]
for any field extension $K/k$.
For each $J$-torsor $T$, we have a restriction map
\[ H^2(k,{}^TS) \to H^2(K,{}^{T_K}S_K) \]
which is equivariant with respect to the homomorphism
\[ H^0(k,{}^TJ) \to H^0(K,{}^{T_K}J_K) \ . \]
Noting that $T \mapsto T_K$ is the restriction morphism
\[ H^1(k,J) \to H^1(K,J) \]
of the indexing sets, we obtain a restriction map via
Remark~\ref{rem:torsors}.
One checks that the restriction maps are compatible and we may
think of $H^2(-,S \to J)$ as being a functor from the category of field
extensions $K/k$ to the category of sets with a distinguished subset.
\end{remark}

\begin{remark}[Period] \label{rem:period}
The set $H^2(k,S \to J)$ is not a group in general.
However, we may still consider powers of elements.  Indeed, any element
$\alpha \in H^2(k,S \to J)$ is represented by an element $\beta$ in
some group $H^2(k,{}_cS)$ for some cocycle $c \in Z^1(k,J)$.
For any integer $n$, we define $\alpha^n$ as the image of $\beta^n$.
Since $J$ acts on each component of the disjoint union by
group automorphisms, this is well-defined.
In particular, the notion of ``period'' still makes sense:
the \emph{period}
of $\alpha \in H^2(k,S \to J)$ is the minimal positive integer $n$ such
that $\alpha^n$ is neutral.
\end{remark}

\begin{remark}[Index] \label{rem:index}
There are at least two natural notions of the ``index'' of an element
$\alpha \in H^2(k,S \to J)$.  We define the
\emph{neutralizing index} (resp. \emph{splitting index}) of
$\alpha$ is the greatest common divisor of the degrees of all finite
extensions $L/k$ such that the restriction $\Res^L_k(\alpha)$ is
neutral (resp. trivial).
\end{remark}

\begin{example} \label{ex:P1P3}
Let $k$ be the real numbers $\bbR$.
Consider $X=\bbP^1 \times \bbP^1$ where $S = \bbG_m^2$ and $J=C_2$.
By Lemma~\ref{lem:altH2desc} we find that
\[ H^2(\bbR, S \to J) \simeq
\left(\Br(\bbR)^2/C_2 \right) \sqcup \Br(\bbC) \]
and see that every Brauer class corresponds to a form of $X$
from Example~\ref{ex:P1P1}.

In contrast, consider $X=\bbP^1 \times \bbP^3$ where
$S = \bbG_m^2$ and $J$ is trivial.
Here
\[ H^2(\bbR, S \to J) \simeq \Br(\bbR)^2 \]
and the $k$-forms are
\[ \bbP^1 \times \bbP^3, C \times \bbP^3,
\bbP^1 \times C', C \times C'\]
where $C$ and $C'$, respectively, are the non-split forms of
$\bbP^1$ and $\bbP^3$, respectively.

Note that in both these examples $S$ is the same.
However, the finite group $J$ is different in each case.
\end{example}

\begin{example}[Permutation Lattices]
\label{ex:permH2}
Let $\widehat{P}$ be an $S_n$-lattice with a basis $\{ p_1, \ldots, p_n \}$
permuted by $S_n$.  Let $P$ be the dual torus $D(\widehat{P})$.
We would like to understand the set $H^2(k,P \to S_n)$.

Recall that elements of $H^1(k,S_n)$ are in bijection with
$\Gamma$-actions on $\{ p_i \}$ and with \'etale $k$-algebras
of degree $n$.
Thus, for any cocycle $c \in Z^1(k,S_n)$, the twisted torus ${}_cP$ is simply a
Weil restriction $\Weil{E/k} \bbG_{m,E}$ where $E$ is the corresponding \'etale
$k$-algebra.

Writing $E = F_1 \times \cdots \times F_r$ as a decomposition of
field extensions of $k$ we have
\begin{align*}
H^2(k,{}_cP)
& \simeq H^2(k,\Weil{E/k} \bbG_m)
\simeq \prod_{i=1}^r H^2(k,\Weil{F_i/k} \bbG_m) \\
& \simeq \prod_{i=1}^r H^2(F_i,\bbG_m)
\simeq \prod_{i=1}^r \Br(F_i)
\end{align*}
for each cocycle $c \in Z^1(k,S_n)$.  Note that the action of
$H^0(k,{}_cS_n)$ permutes isomorphic field extensions in the
decomposition of $E$.

By the decomposition in
Lemma~\ref{lem:altH2desc}, we conclude that $H^2(k,P \to S_n)$ consists
of all sets
\[\{ (\alpha_1, F_1), \ldots, (\alpha_r, F_r) \}\]
of pairs, where each $F_i$ is a separable field extension of $k$
and each $\alpha_i$ is an element of $\Br(F_i)$,
satisfying the condition $\sum_{i=1}^r [F_i:k] = n$.
\end{example}

\begin{remark}(Elementary Obstruction)
The elements in $H^2(k,S \to J)$ are closely related to the ``elementary
obstructions'' from~\cite{ColSan87La-descente}.  Indeed, given a
$k$-form of split smooth projective toric variety,
its class in $H^2(k,S \to J)$ is neutral if and only if the elementary
obstruction is trivial.
In Proposition~1.3~of~\cite{Sko09Automorphisms},
A.~Skorobogatov shows that the images of
the elementary obstruction and the classes defined above
actually coincide in the set $H^2(k,S \to \Aut(S))$
where one must extend the definition above to consider the infinite
group $\Aut(S)$
(in this reference, the type of a universal torsor is only well-defined
up to an isomorphism of the Picard group).
\end{remark}

\section{Comparison to ordinary Brauer groups}
\label{sec:H2approx}

There is a natural bijection between the isomorphism classes of
Severi-Brauer varieties and their associated central simple algebras.
This is a stronger statement than merely identifying the classes of
these algebras in $H^2(k,\bbG_m)$.
In \cite{Blu10Del-Pezzo}, M.~Blunk constructs a pair of separable algebras
$B$ and $Q$ and exhibits a bijection with the $k$-forms of a del Pezzo
surface of degree $6$ and a subset of the $k$-forms of the algebra $B
\times Q$.

Both of these constructions are well-behaved under extension of the base
field.  In both cases we have a toric variety $X$ and a separable
$k$-algebra $B$ along with an injection
\begin{equation} \label{eq:isoInj}
\left\{
{\text{\parbox{3.75cm}{
\centering
\begin{tabular}{c}
isomorphism classes\\
of $k$-forms of $X$
\end{tabular}
}}}
\right\}
\longhookrightarrow
\left\{
{\text{\parbox{3.75cm}{
\centering
\begin{tabular}{c}
isomorphism classes\\
of $k$-forms of $B$
\end{tabular}
}}}
\right\}
\end{equation}
which is natural in the field $k$.
The remainder of the paper focuses on a partial answer to this question.

Recall that the automorphism group $\Aut(B)$ of a separable algebra $B$
is a linear algebraic group with connected component $\Aut(B)^\circ$.
Given a finite closed subgroup $J$ of the quotient $\Aut(B)/\Aut(B)^\circ$, the
\emph{$J$-restricted automorphism group of $B$}, denoted $\Aut_J(B)$, is
the preimage of $J$ in $\Aut(B)$.

For a finite group $J$, a finitely generated $J$-module is \emph{permutation}
if it is free as a $\bbZ$-module with basis permuted by $J$;
a $J$-module is \emph{invertible} if it is a
direct summand of a permutation $J$-module.
Note that, for an algebraic group $G$ over a field $k$,
one can view $H^1(-,G)$ as a functor from the category of
field extensions $K/k$ to the category of sets.

The following theorem is our main goal for the rest of the paper and will
be proved in Section~\ref{sec:approximation}. 

\begin{thm} \label{thm:algInj}
Let $X$ be a smooth projective split toric variety over a field $k$.
Let $J$ be the image of the homomorphism $\Aut(X) \to \Aut(\Pic(X))$.
Suppose that $\Pic(X)$ is an invertible $J$-lattice.
There exists a canonical separable algebra $B$ over $k$,
and a natural transformation
\begin{equation} \label{eq:introAlgInj}
H^1(-,\Aut(X) ) \to H^1(-,\Aut_J(B)) \ .
\end{equation}
which is injective for every field $K/k$.
\end{thm}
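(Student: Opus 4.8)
The plan is to realize the map in \eqref{eq:introAlgInj} as the effect on $H^1$ of a single $k$-homomorphism of algebraic groups $\rho\colon \Aut(X) \to \Aut_J(B)$, and to prove injectivity not by analyzing $\rho$ directly but by factoring the induced transformation through the set $H^2(-,S \to J)$ of Section~\ref{sec:non-abelianH2}. The guiding principle is that Theorem~\ref{thm:H2main}, applied over every field extension $K/k$, already embeds $H^1(K,\Aut(X))$ into $H^2(K,S\to J)$; I only need an algebra $B$ whose class set embeds into a larger $H^2(K,S_B\to J)$ compatibly, with the enlargement controlled by the invertibility hypothesis.

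First I would construct $B$ and $\rho$. By Theorem~\ref{thm:twoLines} and Proposition~\ref{prop:ignoreU} I may replace $\Aut(X)$ by $(\GL_1(A)/S)\rtimes J$ for Galois-cohomological purposes, where $A=\prod_{\lambda\in\Lambda}\End(V_\lambda)$ is the Cox endomorphism algebra; since $X$ is smooth, $\Pic(X)=\Cl(\overline X)$ is torsion-free and $S=D(\Pic(X))$ is a torus. The center satisfies $\GL_1(Z(A))=\bbG_m^{\Lambda}=D(\bbZ^{\Lambda})$, a quasi-split torus whose character lattice $\bbZ^{\Lambda}$ is a permutation $J$-lattice (as $J$ permutes the weights $\lambda$), and $S\hookrightarrow \GL_1(Z(A))$ is dual to the $J$-equivariant surjection $\bbZ^{\Lambda}\twoheadrightarrow\Pic(X)$, $e_\lambda\mapsto\lambda$. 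Using that $\Pic(X)$ is an invertible $J$-lattice, I would choose a permutation $J$-lattice $P$ refining this data together with a $J$-equivariant \emph{split} surjection $P\twoheadrightarrow\Pic(X)$; dualizing gives a quasi-split torus $S_B:=D(P)$ exhibiting $S$ as a $J$-equivariant direct factor, $S_B\simeq S\times S'$. I then let $B$ be the split separable $k$-algebra built from the enlarged weight data (a product of matrix algebras indexed by the $J$-set $P$), so that $Z(B)$ is the split \'etale algebra with $\GL_1(Z(B))=S_B$ and with $J$ acting by permuting its factors, and $\Aut(B)^\circ=\GL_1(B)/S_B$. Comparing the weight data of $A$ and $B$ produces a $J$-equivariant homomorphism $\GL_1(A)\to\GL_1(B)$ carrying $S$ into $S_B$ via the chosen embedding, hence $\GL_1(A)/S\to\Aut(B)^\circ$ and, after adjoining $J$ through a splitting $\Aut_J(B)\simeq\Aut(B)^\circ\rtimes J$ (arranged by ordering the permutation basis as in Theorem~\ref{thm:twoLines}, unique up to conjugacy), the canonical map $\rho$.

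For injectivity I would apply the mechanism of Theorem~\ref{thm:H2main} twice. Because $A$ and $B$ are neutral separable algebras, Hilbert~90 (Proposition~\ref{prop:Hilbert90}) together with Proposition~\ref{prop:cGLisGLc} makes $H^1$ of every twist of $\GL_1(A)$ and of $\GL_1(B)$ trivial, so the connecting maps of the central extensions $1\to S\to\GL_1(A)\to\GL_1(A)/S\to1$ and $1\to S_B\to\GL_1(B)\to\GL_1(B)/S_B\to1$ yield injections $H^1(K,\Aut(X))\hookrightarrow H^2(K,S\to J)$ and $H^1(K,\Aut_J(B))\hookrightarrow H^2(K,S_B\to J)$ over every $K/k$. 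The homomorphism $\rho$, together with $S\hookrightarrow S_B$, induces a commutative square relating these two injections, so it suffices to show that $H^2(K,S\to J)\to H^2(K,S_B\to J)$ is injective. Using the description of Lemma~\ref{lem:altH2desc}, this map is assembled cocycle class $c$ by cocycle class from $H^2(K,{}_cS)\to H^2(K,{}_cS_B)$; since $S$ is a $J$-equivariant direct factor of $S_B$, each ${}_cS_B$ splits as ${}_cS\times{}_cS'$ and the map is the inclusion of a direct summand, hence injective, and injectivity survives the compatible quotients by $H^0(K,{}_cJ)$. Injectivity of the top map of the square then follows formally.

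The main obstacle is the construction itself: producing $B$ \emph{together with} a genuine $k$-homomorphism $\Aut(X)\to\Aut_J(B)$ whose restriction to identity components realizes the direct-factor embedding $S\hookrightarrow S_B$ furnished by invertibility, and verifying that the resulting square of connecting maps commutes on the nose. I emphasize that I do \emph{not} need $\rho$ to be injective as a homomorphism of groups; only commutativity of the $H^2$-square is required, which transports the injection of Theorem~\ref{thm:H2main} across $\rho$. Invertibility is indispensable precisely here: without $\Pic(X)$ being a direct summand of a permutation lattice one obtains only the surjection $\bbZ^{\Lambda}\twoheadrightarrow\Pic(X)$, the embedding $S\hookrightarrow\GL_1(Z(A))$ need not be a direct factor, and the $H^2$-map can identify distinct classes --- exactly the collapse witnessed by the degree-$6$ del Pezzo surface of Example~\ref{ex:DP6}, where the Cox endomorphism algebra alone fails to separate the $k$-forms. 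I would therefore take particular care to confirm that the chosen $S\hookrightarrow S_B$ is a $J$-equivariant \emph{split} injection of tori and that $\rho$ is canonical up to the conjugacy ambiguity in the splitting, so that the natural transformation is well defined over every $K/k$.
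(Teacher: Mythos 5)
Your overall strategy---realize the map as induced by a group homomorphism, then transport injectivity across the commutative square into non-abelian $H^2$---is exactly the mechanism of the paper (its Lemma~\ref{lem:H1H2} combined with Theorem~\ref{thm:H2main}), and your lattice-level observation is sound: a $J$-equivariant \emph{split} surjection $\widehat{P}\twoheadrightarrow\Pic(X)$ would make the map $H^2(K,{}_cS)\to H^2(K,{}_cS_B)$ the inclusion of a direct factor, hence injective, and injectivity does survive the quotients by $H^0(K,{}_cJ)$. But the proof has a genuine gap precisely at the step you yourself flag as ``the main obstacle'' and then never resolve: the construction of $B$ and of the homomorphism $\rho$. ``Comparing the weight data of $A$ and $B$'' is not a construction. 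You never specify the ranks of the matrix factors of $B$, and more seriously, a homomorphism $\GL_1(A)\to\GL_1(B)=\prod_p\GL(W_p)$ restricting on $S$ to the prescribed embedding $S\hookrightarrow S_B$ is exactly a collection of representations $W_p$ of $\GL_1(A)=\prod_{\lambda\in\Lambda}\GL(V_\lambda)$ whose central character on $S$ is the class $\widehat{m}(p)\in\Pic(X)$. An abstract split injection of tori $S\hookrightarrow S_B$ does \emph{not} extend to the ambient groups for free; you need these representations to exist, to be $J$-equivariant (note the stabilizer of $p$ in $J$ must act compatibly), and to be canonical, as the theorem asserts a canonical $B$. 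The only canonical supply of such representations is $W_p=\Cox(X)_{D_p}\simeq H^0(X,\calO_X(D_p))$ with $D_p=\widehat{m}(p)$, which forces the basis of your abstract lattice $\widehat{P}$ to land on effective (in fact one wants globally generated) classes --- a condition your choice of $\widehat{P}$, obtained purely from invertibility of $\Pic(X)$, does nothing to ensure; if some $D_p$ is not effective the graded piece is zero and the factor of $B$ degenerates.

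This is exactly the work the paper does and that your proposal omits. In Theorem~\ref{thm:orbitToAlgebra} the permutation basis $\omega$ is chosen \emph{inside} $\Pic(\overline{X})$ among nef classes, so that the representations are spaces of global sections, $J$-equivariance comes from $J$ permuting monomials of the Cox ring, and one even gets a morphism $f\colon X\to Y=\prod\bbP^{n_i}$ with $B$ the Cox endomorphism algebra of $Y$. The price is that one cannot directly arrange your split surjection with such geometric classes; instead, Theorem~\ref{thm:geomInj} takes $\omega$ to be the canonical generators of the monoids $\Pic(\overline{X})^G\cap\Nef(X)$ over all subgroups $G\subseteq J$, which makes $\widehat{P}^G\to\Pic(\overline{X})^G$ surjective for every $G$, hence the kernel $\widehat{Q}$ coflasque; then Lemma~\ref{lem:injDetails} (using flasqueness of $\widehat{S}$, Merkurjev's period theorem, and the invertibility hypothesis) delivers the injectivity on $H^2$ that you get for free at the lattice level. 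So your simplification of the flasque/coflasque machinery is real, but it is purchased by discarding the geometric input that makes $B$ and $\rho$ exist; to complete your argument you would need to show that a split surjection $\widehat{P}\twoheadrightarrow\Pic(X)$ can be chosen with all basis images effective and $J$-compatible (e.g.\ by adding a permutation summand and translating by a $J$-invariant ample class), and that this can be done canonically --- none of which is in your proposal.
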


Note that $\Pic(X)$ is an invertible $\Gamma$-lattice
for all smooth projective toric surfaces by \cite{Vos67On-two-dimensional}.
Thus, the conclusion of Theorem~\ref{thm:algInj}
holds for all smooth projective split toric surfaces

When $\Aut_J(B) = \Aut(B)$, as is the case for the central simple
algebras associated to Severi-Brauer varieties and Blunk's algebra $B$,
the set $H^1(k,\Aut_J(B))$ can be
interpreted directly as isomorphism classes of $k$-forms of $B$.
When $J$ is trivial, the set $H^1(k,\Aut_1(B))$ is simply the set of
isomorphism classes of $k$-forms of $B$ which preserve a fixed labelling
of each central simple algebra in the product.
In the intermediate cases, $\Aut_J(B)$ preserves this labelling
up to certain symmetries.
Thus, Theorem~\ref{thm:algInj} can be interpreted as variant of our
desired injection \eqref{eq:isoInj} for a mildly weaker notion of
isomorphism.

In \cite{MerPan97K-theory}, A.~Merkurjev and I.~Panin construct a
``motivic category'' which contains both toric varieties and separable
algebras.  In that paper, which was a starting point for Blunk's work,
they also associate forms of separable algebras to forms of toric
varieties.
In Theorem~\ref{thm:MerkPanin}, we show that their method is,
up to Brauer equivalence, essentially the same our own.

\subsection{Morphisms of non-abelian $H^2$}

Throughout this section, $X$ will be a split smooth projective toric
variety.  Recall that this implies that
$\Cl(\overline{X}) \simeq \Pic(\overline{X})$ is a lattice
and thus that $S$ is a torus.
Note that $\Pic(\overline{X})$ has a trivial $\Gamma_k$-action,
but a non-trivial $J$-action; it will be convenient to view
$\widehat{S} = \Pic(\overline{X})$ as a $J$-lattice.

In this section we make precise to what degree the information contained in
$H^2(k,S \to J)$ can be captured by ``ordinary Brauer groups.''
To make this precise, we need to define a notion of morphisms between
non-abelian $H^2$ sets.
Unfortunately, defining ``functoriality'' for non-abelian $H^2$ is a
rather delicate problem (see, e.g., \cite{AldNoo09Butterflies.}).
However, in our restricted context there is a natural
induced morphism which is well-behaved enough for our applications.

\begin{defn} \label{def:H2functoriality}
Let $S$ and $P$ be split groups of multiplicative type with actions of finite
groups $J$ and $I$ respectively.
Suppose $(m,g)$ is a pair of group homomorphisms
$m : S \to P$ and $g : J \to I$ such that $m$ is
$J$-equivariant where the $J$-action on $P$ is given via $g$.
Define the \emph{induced map}
\[ m_* \colon H^2(k,S \to J) \to H^2(k,P \to I) \]
via the decomposition from Lemma~\ref{lem:altH2desc} and the induced
maps
\[ H^2(k,{}_cS) \to H^2(k,{}_{g_*(c)}P) \]
for every cocycle $c$ representing a cohomology class in $H^1(k,J)$.
One checks that $m_*$ is compatible with restriction, and thus may be
viewed as a natural transformation
\[ m_* \colon H^2(-,S \to J) \to H^2(-,P \to I) \]
of functors from field extensions $K/k$ to the category of sets with
distinguished subsets.
\end{defn}

A $G$-lattice $M$ is \emph{invertible} if $M$ is a direct
summand of a permutation $G$-module.
Recall that a $k$-variety $Y$ is \emph{retract rational} if there exists an
affine space $V$, a dominant rational map $\psi : V \dasharrow X$ and a
rational map $\eta : X \dasharrow V$ such that the composition
$\psi \circ \eta$ is defined and equivalent to the identity as a rational
map.

\begin{thm} \label{thm:inj}
Let $X$ be a smooth projective split toric variety.
The following are equivalent:
\begin{enumerate}
\item[(a)]
There exists a morphism $\widehat{m} : \widehat{P} \to \widehat{S}$
of $J$-lattices, where $\widehat{P}$ is permutation,
such that the morphism of functors
\[ m_* : H^2(-, S \to J) \to H^2(-, P \to J) \]
is injective,
\item[(b)] $\widehat{S}$ is an invertible $J$-lattice,
\item[(c)] for every field extension $K/k$, every neutral $K$-form of $X_K$ is
retract rational.
\end{enumerate}
\end{thm}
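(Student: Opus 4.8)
The plan is to translate all three conditions into the language of $J$-lattices through the $J$-equivariant exact sequence
\[ 0 \to M \to \widetilde{M} \to \widehat{S} \to 0 \]
of \eqref{eq:TTS} (here $\widehat{S}=\Pic(\overline{X})$, and $\widetilde{M}$, the lattice of torus-invariant divisors, is a \emph{permutation} $J$-lattice because $J$ permutes the rays via the splitting $W\simeq W^\circ\rtimes J$ of Theorem~\ref{thm:twoLines}), and then to prove the two equivalences (b)$\Leftrightarrow$(a) and (b)$\Leftrightarrow$(c). Throughout I treat a morphism $\widehat{m}\colon\widehat{P}\to\widehat{S}$ of $J$-lattices as the same datum as its dual torus map $m\colon S\to P$, and I recall that a permutation $\widehat{P}$ corresponds to a quasi-trivial torus $P=\GL_1(E)$, so that $H^1(K,{}_cP)=0$ by Proposition~\ref{prop:Hilbert90} and $H^2(K,{}_cP)$ is a product of Brauer groups by Lemma~\ref{lem:Shapiro}.

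The two implications \emph{out of} (b) are routine. For (b)$\Rightarrow$(a), invertibility of $\widehat{S}$ gives $J$-maps $\widehat{S}\xrightarrow{\;\widehat{i}\;}\widehat{P}\xrightarrow{\;\widehat{m}\;}\widehat{S}$ with $\widehat{P}$ permutation and $\widehat{m}\widehat{i}=\operatorname{id}$; dualizing yields $J$-equivariant $m\colon S\to P$ and $i\colon P\to S$ with $i\circ m=\operatorname{id}_S$, and Definition~\ref{def:H2functoriality} then gives $i_*\circ m_*=\operatorname{id}$ on $H^2(-,S\to J)$, so $m_*$ is (split) injective. For (b)$\Rightarrow$(c) I use that an invertible lattice is flasque: when $\widehat{S}$ is invertible the displayed sequence is a flasque resolution of $M$ whose flasque term $\widehat{S}$ is invertible, so by the criterion of Colliot-Thélène and Sansuc (a torus is retract rational iff the flasque part of its character lattice is invertible) the torus $T=D(M)$, and every twist of it, is retract rational; this yields (c) after the reduction below.

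To handle (c) I first give it a birational meaning. By Theorem~\ref{thm:mainSquare} every neutral $K$-form of $X_K$ equals ${}_cX$ for some $c\in Z^1(K,J)$, and since $X$ is birational to its open orbit, ${}_cX$ is birational to the torus $D({}_cM)$ (which now carries a rational point, ${}_cX$ being neutral). Thus (c) is equivalent to: $D({}_cM)$ is retract rational for all $K/k$ and all $c$. Choosing a versal cocycle, one for which $\Gamma_K\twoheadrightarrow J$, and using that invertibility of a lattice is unaffected by inflation and restriction, this is in turn equivalent to the flasque part of $M$ being invertible as a $J$-lattice, i.e.\ to the vanishing $\rho_J(M)=0$ of the flasque class. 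Together with the previous paragraph, (b)$\Leftrightarrow$(c) is reduced to the purely lattice-theoretic assertion that, for the sequence $0\to M\to\widetilde{M}\to\widehat{S}\to 0$ with $\widetilde{M}$ permutation, one has $\rho_J(M)=0$ if and only if $\widehat{S}$ is invertible.

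The substance of the theorem lies in the converse implications (a)$\Rightarrow$(b) and (c)$\Rightarrow$(b), each of which extracts genuine $J$-equivariant invertibility of $\widehat{S}$ from a weaker hypothesis, and this is where I expect the real difficulty. For (a)$\Rightarrow$(b) I would decompose both $H^2$-sets by Lemma~\ref{lem:altH2desc}; on each component $m_*$ is the Brauer-group map induced by the $\bbZ$-dual $\widehat{m}^{\vee}\colon\widehat{S}^{\vee}\to\widehat{P}^{\vee}$ tensored with the coefficients $H^2(K,{}_c\bbG_m)$, and feeding in fields $K$ with large Brauer groups (so that $\operatorname{Br}(K)\otimes(-)$ detects the cokernel of $\widehat{m}^{\vee}$) forces $\widehat{m}^{\vee}$ to be a split injection of $J$-lattices, whence $\widehat{S}^{\vee}$, and so $\widehat{S}$, is a summand of a permutation lattice; the delicate point is to promote field-by-field injectivity to an honest $J$-equivariant section while correctly accounting for the quotients by $H^0(K,{}_cJ)$, and a versal $c$ is what exposes the full $J$-module structure. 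For (c)$\Rightarrow$(b) the obstacle is that $\rho_J(M)=0$ only says the flasque part of $M$ is invertible, which for abstract lattices does \emph{not} force $\widehat{S}=\widetilde{M}/M$ to be invertible, as the augmentation sequence $0\to\bbZ\to\bbZ[C_2]\to\bbZ_-\to 0$ shows. What must rescue the argument is the projectivity of $X$, which endows $\widehat{S}$ with a $J$-stable, strongly convex, full-dimensional effective cone spanned by the permuted classes of the rays; I expect the crux to be showing that this positivity rigidifies the sequence enough to exclude exactly such augmentation-type degenerations, so that the displayed sequence may be identified with a flasque resolution of $M$ and invertibility of its flasque part transfers to $\widehat{S}$. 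This last step is the one I would treat most carefully.
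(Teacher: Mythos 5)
Your two easy directions are fine --- (b)$\Rightarrow$(a) by dualizing a $J$-equivariant splitting $\widehat{S}\to\widehat{P}\to\widehat{S}$ so that $i_*\circ m_*=\id$, and (b)$\Rightarrow$(c) by twisting \eqref{eq:TTS} and using the flasque-resolution criterion for retract rationality --- and your diagnosis of where the difficulty sits is accurate. But both converse implications are left unproved, and the sketches you give would not close them. For (a)$\Rightarrow$(b): your identification of the component maps of $m_*$ with $\id_{\Br(K)}\otimes\widehat{m}^{\vee}$ is valid only on the trivial-cocycle component; for nontrivial $c\in Z^1(K,J)$ the twisted tori ${}_cS$, ${}_cP$ are not split, and $H^2(K,{}_cS)$ is not $\Br(K)\otimes\widehat{S}^{\vee}$. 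More importantly, as you concede, ``fields with large Brauer groups'' can at best force a $\bbZ$-split injection, and promoting that to a $J$-equivariant splitting is exactly the content of the theorem --- no formal argument does it. The paper closes this gap with arithmetic inputs absent from your sketch: it fixes a coflasque resolution $1\to\widehat{Q}\to\widehat{P}\to\widehat{S}\to 1$, shows by versality (Proposition~\ref{prop:omegaCoflasque}, using coflasqueness of $\widehat{Q}$) that any morphism as in (a) factors through this one, converts injectivity of the component maps into the vanishing of $H^1(K,{}_cQ)$ for all $K$ and $c$ via Hilbert~90 and the long exact sequence (Lemma~\ref{lem:injDetails}), and then invokes Merkurjev's generic-torsor theorem (Theorem~3.2 of \cite{Mer10Periods}) to conclude that $\widehat{Q}$ is invertible, hence $\widehat{S}$ is invertible by Lemma~6 of \cite{ColSan77La-R-equivalence}.

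For (c)$\Rightarrow$(b): you correctly observe that invertibility of the flasque part of $M$ does not force invertibility of $\widehat{S}$ unless $\widehat{S}$ is itself flasque (your $C_2$ augmentation sequence is exactly the right counterexample otherwise), but you leave flasqueness as a hope that ``positivity rigidifies the sequence.'' That is not something to be improvised in this proof: it is Proposition~\ref{prop:Sflasque} of the paper, i.e.\ Proposition~6 of \cite{ColSan77La-R-equivalence} (the Picard lattice of a smooth compactification of a torus is flasque), applied over a field admitting a surjection $\Gamma_K\twoheadrightarrow J$ so as to obtain flasqueness as a $J$-lattice. With that in hand the paper does not even need your lattice-theoretic reduction of (c): Saltman's Theorem~3.14 of \cite{Sal84Retract} states directly that a neutral toric variety is retract rational if and only if $\Pic(\overline{X})$ is invertible, which (together with a versal cocycle to pass between $\Gamma_K$- and $J$-invertibility) is precisely (b)$\Leftrightarrow$(c). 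In short: the skeleton of your proposal is sound and its easy halves are correct, but both hard implications hinge on known theorems --- flasqueness of the Picard lattice, and the Merkurjev/Saltman results on versal torsors and retract rationality --- which cannot be replaced by the formal manipulations you outline, so the proof as proposed has genuine gaps.
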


\begin{remark}
Note that retract rationality is a birational invariant; thus,
the retract rationality of a toric variety can be determined by
consideration of only the open orbit.
Retract rational tori have been completely classified in small
dimensions.  See \cite{Vos67On-two-dimensional},
\cite{Kun87Three-dimensional} and \cite{HosYam12Rationality0}.
In fact, all $2$-dimensional tori are rational; thus all toric surfaces
satisfy the equivalent conditions of Theorem~\ref{thm:inj}.
\end{remark}

We may interpret Theorem~\ref{thm:inj} as a statement about ordinary
Brauer groups in the following way.
Fix a choice of basis $\omega$ for $\widehat{P}$.
Then twisting by a cocycle $c \in Z^1(k,J)$ gives
a $\Gamma_k$-set ${}_c\omega$ as basis for $\widehat{{}_cP}$.
Each $\Gamma_k$-set
corresponds to an \'etale algebra ${}_cL$.
Thus $H^2(k,{}_cP) \simeq H^2({}_cL,\bbG_m) \simeq \Br({}_cL)$.

By the injection of~Theorem~\ref{thm:H2main}, we have a composite map
\[ H^1(k,\Aut(X)) \to H^2(k,S \to J) \to H^2(k, P \to J) \]
which is injective if and only if the map  $m_*$ is injective.
Given a fixed cocycle $c \in H^1(k,J)$, we have a morphism
\[ H^1(k,\Aut_1({}_cX)) \to \Br({}_cL) \]
for each neutralization class
(recall the definitiion of $\Aut_1({}_cX)$ from
Remark~\ref{rem:restrictAuto}).

We recall some preliminaries on flasque and coflasque tori
from, for example, \cite{ColSan77La-R-equivalence}.
Let $G$ be a finite group.
A $G$-lattice $M$ is \emph{flasque} (resp. \emph{coflasque}) if
the Tate cohomology group $\widehat{H}^{-1}(H,M)$
(resp. $\widehat{H}^{1}(H,M)$)
is trivial for every subgroup $H \subset G$.
Every $G$-lattice $M$ has a \emph{coflasque resolution}, which is an
exact sequence
\[
1 \to A \to B \to M \to 1
\]
where $A$ is a coflasque $G$-lattice and $B$ is a permutation
$G$-lattice.

For any torus $T$ defined over a field $k$, the action of $\Gamma_k$ on
the character lattice $M$ factors through a finite group $G$.
A torus $T$ is defined to be \emph{flasque}, \emph{coflasque}, or
\emph{invertible} if the corresponding property is true of the
$G$-module $M$.

\begin{lem} \label{lem:injDetails}
Consider an exact sequence of $J$-lattices
\[
\xymatrix{
1 \ar[r] &
\widehat{Q} \ar[r] &
\widehat{P} \ar[r]^{\widehat{m}} &
\widehat{S} \ar[r] &
1 } \]
where $\widehat{Q}$ is coflasque, $\widehat{P}$ is permutation and
$\widehat{S}$ is flasque.
Let $Q$, $P$, and $S$ be the corresponding dual tori
and let $m : S \to P$ the dual morphism obtained from $\widehat{m}$.
Let $m_* : H^2(-, S \to J) \to H^2(-, P \to J)$ be the natural
transformation induced from $m$
as in Definition~\ref{def:H2functoriality}.
Let $m'_* : H^2(-, S) \to H^2(-,P)$ be the usual induced map on abelian
Galois cohomology obtained from $m$.
Then the following are equivalent.
\begin{enumerate}
\item[(a)] $m_* : H^2(-, S \to J) \to H^2(-, P \to J)$ is injective,
\item[(b)] ${}_c(m'_*) : H^2(K, {}_cS ) \to H^2(K, {}_cP)$ is injective
for every field extension $K/k$ and every cocycle $c \in Z^1(K,J)$,
\item[(c)] $H^1(K, {}_cQ)$ is trivial
for every field extension $K/k$ and every cocycle $c \in Z^1(K,J)$,
\item[(d)] $\widehat{Q}$ is invertible,
\item[(e)] $\widehat{S}$ is invertible.
\end{enumerate}
\end{lem}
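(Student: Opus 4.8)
The plan is to dualize the given exact sequence of $J$-lattices to an exact sequence of tori
\[ 1 \to S \xrightarrow{m} P \to Q \to 1, \]
in which $P$ is quasi-split (as $\widehat{P}$ is permutation), $S$ is flasque, and $Q$ is coflasque. Read one way this is a flasque resolution of the torus $Q$ with flasque part $S$; read the other way it is a coflasque resolution of $\widehat{S}$. I would then establish the easy pairwise equivalences (b)$\Leftrightarrow$(c) and (a)$\Leftrightarrow$(b) directly, the lattice-theoretic equivalence (d)$\Leftrightarrow$(e) from the standard theory of flasque resolutions, and finally the bridge (c)$\Leftrightarrow$(d) between cohomological triviality and invertibility, which is the crux.

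First I would dispose of (b)$\Leftrightarrow$(c). Twisting the sequence of tori by a cocycle $c \in Z^1(K,J)$ and applying the long exact sequence, Proposition~\ref{prop:Hilbert90} gives $H^1(K,{}_cP)=0$ since ${}_cP$ is quasi-split, so the connecting map identifies $H^1(K,{}_cQ)$ with $\ker\big({}_c(m'_*)\colon H^2(K,{}_cS)\to H^2(K,{}_cP)\big)$. As ${}_c(m'_*)$ is a homomorphism of abelian groups, it is injective exactly when this kernel vanishes; hence (b) and (c) are the same condition. For (a)$\Leftrightarrow$(b) I would use the description of $m_*$ from Definition~\ref{def:H2functoriality} together with the fibration of Lemma~\ref{lem:altH2desc}: over each class in $H^1(k,J)$ the map $m_*$ is the map on $H^0(k,{}_cJ)$-orbit sets induced by the $H^0(k,{}_cJ)$-equivariant homomorphism ${}_c(m'_*)$, and distinct fibres map to distinct fibres because the group homomorphism $g\colon J\to J$ is the identity. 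An equivariant monomorphism descends to an injection of orbit sets, giving (b)$\Rightarrow$(a). Conversely, if ${}_c(m'_*)$ had a nonzero kernel element $x$ over some extension $K$, then $x$ represents a non-neutral class of $H^2(K,S\to J)$ — the neutral class in its component is the orbit of $0$, which is the singleton $\{0\}$ — whose image under $m_*$ is the neutral class, contradicting injectivity of $m_*$ over $K$; this yields (a)$\Rightarrow$(b).

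For (d)$\Leftrightarrow$(e) I would invoke the theory of flasque resolutions as in \cite{ColSan77La-R-equivalence}. The given sequence $0\to\widehat{Q}\to\widehat{P}\xrightarrow{\widehat{m}}\widehat{S}\to 0$, with $\widehat{P}$ permutation and $\widehat{S}$ flasque, is precisely a flasque resolution of $\widehat{Q}$, so its flasque part is $\widehat{S}$. The standard fact is that a lattice is invertible if and only if the flasque part of any one of its flasque resolutions is invertible; well-definedness follows from Schanuel's lemma, which forces any two flasque parts to differ by permutation summands, together with the observation that a direct summand of an invertible lattice is invertible. Applying this to $\widehat{Q}$ yields exactly (d)$\Leftrightarrow$(e).

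The remaining and deepest step is (c)$\Leftrightarrow$(d). The implication (d)$\Rightarrow$(c) is soft: if $\widehat{Q}$ is invertible then for every $c$ the torus ${}_cQ$ is a direct factor of a quasi-split torus, so $H^1(K,{}_cQ)$ is a direct summand of a group that vanishes by Proposition~\ref{prop:Hilbert90}. I expect the reverse implication (c)$\Rightarrow$(d) to be the main obstacle: I must promote the \emph{universal} vanishing of $H^1$ — over all field extensions $K/k$ and all twists $c\in Z^1(K,J)$ — of the coflasque torus $Q$ to invertibility of the $J$-lattice $\widehat{Q}$. Here I would appeal to the characterization of invertible (equivalently, retract rational) tori in \cite{ColSan77La-R-equivalence}: quantifying over all cocycles $c$ realizes every twisted form of $Q$, in particular the versal $J$-torsor and the restrictions to every subgroup of $J$, and for a coflasque torus the vanishing of $H^1$ across this entire family is equivalent to invertibility of its character lattice. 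Extracting a nonzero class in some $H^1(K,{}_cQ)$ from the nontriviality of the flasque class of $\widehat{S}$ when $\widehat{Q}$ is not invertible is the technical heart of the lemma.
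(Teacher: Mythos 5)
Most of your proposal coincides with the paper's own proof. Your reductions (a)$\Leftrightarrow$(b), (b)$\Leftrightarrow$(c), (d)$\Rightarrow$(c), and (d)$\Leftrightarrow$(e) are exactly the paper's: the orbit-set argument over the decomposition of Lemma~\ref{lem:altH2desc}, noting that an $H^0(K,{}_cJ)$-equivariant group homomorphism is injective if and only if the induced map on orbit sets is injective (since the orbit of $0$ is $\{0\}$); the long exact sequence of $1 \to {}_cS \to {}_cP \to {}_cQ \to 1$ together with Proposition~\ref{prop:Hilbert90} for the quasi-split torus ${}_cP$; the factorization of the identity of $\widehat{Q}$ through a permutation lattice; and Lemma~6 of \cite{ColSan77La-R-equivalence}, for which your Schanuel-type sketch is the standard proof.

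The gap is the implication you yourself flag as the ``technical heart'': (c)$\Rightarrow$(d) is never actually proved, and the reference you lean on cannot supply it. The criterion you invoke --- that a coflasque torus all of whose twists have trivial $H^1$ over every field extension must have invertible character lattice --- is not in \cite{ColSan77La-R-equivalence}; that paper predates Saltman's notion of retract rationality and contains the lattice-theoretic lemmas and the identification of $R$-equivalence with $H^1$ of the flasque part, but not this universal-triviality criterion. Since that criterion \emph{is} essentially the equivalence (c)$\Leftrightarrow$(d) being proved, citing it as known is circular, and ``extracting a nonzero class in some $H^1(K,{}_cQ)$ when $\widehat{Q}$ is not invertible'' is precisely the content you would have to supply. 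The paper closes this step with Theorem~3.2 of \cite{Mer10Periods}: choosing $K/k$ with a surjective cocycle $c\colon \Gamma_K \to J$ (such $K$ always exists, e.g.\ by Noether's construction), hypothesis (c) implies in particular that the generic ${}_cQ$-torsor is trivial; Merkurjev's theorem then identifies the period of the generic torsor with the order of the class of the extension $0 \to \widehat{Q} \to \widehat{P} \to \widehat{S} \to 0$, viewed as a flasque resolution of $\widehat{Q}$, so that class vanishes, the sequence splits, and $\widehat{Q}$ is a direct summand of the permutation lattice $\widehat{P}$, i.e.\ invertible. Some such genuinely nontrivial input (Merkurjev's period theorem, or the later Colliot-Th\'el\`ene--Sansuc results on principal homogeneous spaces under flasque tori) is unavoidable here; without it your argument establishes only (a)$\Leftrightarrow$(b)$\Leftrightarrow$(c), (d)$\Leftrightarrow$(e), and (d)$\Rightarrow$(c), not the full equivalence.
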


\begin{proof}
First, we prove the equivalence of (a) and (b).
By the decomposition from Lemma~\ref{lem:altH2desc}, it suffices to
check whether each constituent map
\[ H^2(K, {}_cS)/H^0(K, {}_cJ) \to H^2(k, {}_cP)/H^0(K, {}_cJ) \]
is injective for each $c \in Z^1(K,J)$.
If each ${}_c(m'_*)$ is injective, then so must be each constituent map.
Conversely, since each ${}_c(m'_*)$ is a morphism of groups and $H^0(K, {}_cJ)$
acts by group automorphisms, if the preimage of the trivial element is
trivial then ${}_c(m'_*)$ is injective.

The equivalence of (b) and (c) follow from the triviality of
$H^1(K, {}_cP)$ for all cocycles $c \in Z^1(K,J)$ since
$\widehat{P}$ is a permutation $J$-lattice.

The implication (d) $\implies$ (c) follows since there is a
factorization $\widehat{Q} \to \widehat{M} \to \widehat{Q}$
of the identity morphism for some permutation lattice $M$.
This means that the identity morphism on $H^1(K, {}_cQ)$ factors
through the trivial group $H^1(K, {}_cM)$ for every cocycle $c$.

The implication (c) $\implies$ (d) follows from
Theorem~3.2~of~\cite{Mer10Periods}.
Indeed, we know that, in particular, the generic ${}_cQ$-torsor is
trivial.  The theorem tells us that class of the extension corresponding
to a flasque resolution of $\widehat{Q}$ is trivial.  We conclude that
$\widehat{Q}$ is a direct summand of a permutation module as desired.

Finally, the equivalence of (d) and (e) follows from
Lemma~6~of~\cite{ColSan77La-R-equivalence}.  Indeed, $\widehat{S}$ is
obtained via a flasque resolution of $\widehat{Q}$ and, conversely,
$\widehat{Q}$ is obtained via a coflasque resolution of $\widehat{S}$.
\end{proof}

\begin{prop}[Versality] \label{prop:omegaCoflasque}
Suppose $\widehat{Q}$ is coflasque.
Let $R$ be a torus with a $J$-action such that
the dual $\widehat{R}$ is a permutation $J$-lattice,
and let $q : S \to R$ be a $J$-equivariant morphism of tori.
Then
\[q_* \colon H^2(k,S \to J) \to H^2(k,R \to J)\]
factors through
\[m_* \colon H^2(k,S \to J) \to H^2(k,P \to J)\ . \]
\end{prop}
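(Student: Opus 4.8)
The plan is to produce a $J$-equivariant morphism of tori $r \colon P \to R$ satisfying $r \circ m = q$; the induced map $r_* \colon H^2(k,P \to J) \to H^2(k,R \to J)$ of Definition~\ref{def:H2functoriality} will then exhibit the desired factorization $q_* = r_* \circ m_*$. For this I first need that the assignment $m \mapsto m_*$ is functorial whenever the accompanying homomorphisms of the finite groups are the identity on $J$, i.e. that $(r \circ m)_* = r_* \circ m_*$. Using the description of the induced maps through the decomposition of Lemma~\ref{lem:altH2desc}, both composites act componentwise, at each fixed cocycle $c \in Z^1(k,J)$, as the composition of the ordinary induced maps $H^2(k,{}_cS) \to H^2(k,{}_cP) \to H^2(k,{}_cR)$; functoriality of induced maps on abelian Galois cohomology gives the claim. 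Hence $q_* = (r \circ m)_* = r_* \circ m_*$ as soon as $r \circ m = q$, and the whole problem reduces to constructing such an $r$.

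By the exact anti-equivalence of duality, producing $r$ with $r \circ m = q$ is the same as producing a $J$-equivariant homomorphism $\widehat{r} \colon \widehat{R} \to \widehat{P}$ of character lattices with $\widehat{m} \circ \widehat{r} = \widehat{q}$; that is, a lift of $\widehat{q} \colon \widehat{R} \to \widehat{S}$ through the surjection $\widehat{m}$. Applying $\Hom_J(\widehat{R},-)$ to the exact sequence of $J$-lattices
\[ 1 \to \widehat{Q} \to \widehat{P} \xrightarrow{\widehat{m}} \widehat{S} \to 1 \]
(with $\widehat{Q}$ coflasque and $\widehat{P}$ permutation, as in Lemma~\ref{lem:injDetails}) shows that the obstruction to lifting $\widehat{q}$ lies in $\Ext^1_J(\widehat{R},\widehat{Q})$. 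It therefore suffices to prove that this group vanishes.

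This vanishing is the technical heart of the argument, and it is exactly where the hypotheses on $\widehat{R}$ and $\widehat{Q}$ enter. Since $\widehat{R}$ is a permutation $J$-lattice it is a direct sum of modules $\bbZ[J/H]$ over various subgroups $H \leq J$, so it is enough to show $\Ext^1_J(\bbZ[J/H],\widehat{Q}) = 0$ for each such $H$. By Shapiro's lemma this group is isomorphic to $H^1(H,\Res^J_H \widehat{Q})$, which in positive degree coincides with the Tate cohomology $\widehat{H}^1(H,\widehat{Q})$; and the latter vanishes for every subgroup $H \leq J$ precisely by the definition of coflasque. Consequently $\Ext^1_J(\widehat{R},\widehat{Q}) = 0$, the lift $\widehat{r}$ exists, and dualizing gives the $J$-equivariant $r$ with $r \circ m = q$ needed to complete the factorization.

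I expect the only genuinely delicate point to be the functoriality step used at the outset, namely confirming that the induced maps $m_*$ of Definition~\ref{def:H2functoriality} compose as $(r \circ m)_* = r_* \circ m_*$; functoriality for non-abelian $H^2$ is subtle in general, as noted earlier in the paper. Here the saving grace is that the homomorphisms of the finite groups are all the identity on $J$, so no reindexing of the components occurs and everything reduces to the unproblematic functoriality of the ordinary induced maps on each $H^2(k,{}_cS)$; the remaining lifting and Ext-vanishing steps are then standard homological algebra.
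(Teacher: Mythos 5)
Your proof is correct, and it reorganizes the argument in a way worth comparing with the paper's. The paper never constructs a single lift: it fixes a cocycle $c \in Z^1(k,J)$, twists all four tori, observes that ${}_cR$ is quasi-split and that $1 \to {}_cS \to {}_cP \to {}_cQ \to 1$ is a coflasque resolution of tori over $k$ (the Galois action on all the character lattices factoring through $J$), and invokes Lemma~4 of~\cite{ColSan77La-R-equivalence} to factor ${}_cq$ through ${}_cm$ --- one component of the decomposition of Lemma~\ref{lem:altH2desc} at a time, with factoring maps ${}_cP \to {}_cR$ that a priori depend on $c$ and need not be twists of a single $J$-equivariant morphism. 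You instead prove the lifting statement once, $J$-equivariantly, at the lattice level: the obstruction lies in $\Ext^1_J(\widehat{R},\widehat{Q})$, which vanishes by Shapiro's lemma and coflasqueness of $\widehat{Q}$. This is in substance the same homological fact that underlies the cited lemma of Colliot-Th\'el\`ene--Sansuc, transplanted from Galois lattices to $J$-lattices, so the two proofs share their core; but your packaging is cleaner and slightly stronger. Producing one $J$-equivariant $r \colon P \to R$ with $r \circ m = q$ makes the factoring map itself an induced map $r_*$ in the sense of Definition~\ref{def:H2functoriality}: it is then automatically well defined on the quotients by $H^0(k,{}_cJ)$, respects the indexing by $H^1(k,J)$, and is compatible with restriction, so the factorization holds at the level of the functors $H^2(-,S \to J)$. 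In the paper's componentwise version, the descent of the $c$-dependent factoring maps to those quotients is left implicit (it does work, but only because $m$ and $q$ themselves are $J$-equivariant). Your opening functoriality check $(r \circ m)_* = r_* \circ m_*$ is also exactly the right point to isolate: since the homomorphism on $J$ is the identity, no reindexing of components occurs, and the claim reduces to ordinary functoriality of abelian $H^2$ on each component, as you say. What the paper's route buys in exchange is brevity: it cites a standard result rather than redoing the Ext computation and avoids any discussion of composing induced maps.
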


\begin{proof}
Fix a cocycle $c \in Z^1(k,J)$ and note that it may be viewed as a
morphism $c : \Gamma_k \to J$ since $J$ is a constant group.
The action of $\Gamma_k$ on the character lattices of ${}_cS$, ${}_cR$,
${}_cP$ and ${}_cQ$  then factor through $J$.
We may conclude that ${}_cR$ is quasi-split and
\[1 \to {}_cS \to {}_cP \to {}_cQ \to 1\]
is a coflasque resolution of tori.
Thus, the morphism ${}_c q : {}_cS \to {}_cR$ factors through ${}_c m :
{}_cS \to {}_cP$ by Lemma~4 of~\cite{ColSan77La-R-equivalence}.
We conclude that $q_*$ factors through $m_*$ via
Definition~\ref{def:H2functoriality}.
\end{proof}

We point out the following fact which is essentially due to
Colliot-Th\'el\`ene and Sansuc.

\begin{prop} \label{prop:Sflasque}
The $J$-lattice $\widehat{S}$ is flasque.
\end{prop}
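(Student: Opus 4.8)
The plan is to recognize the divisor sequence \eqref{eq:TTS} as a flasque resolution of the character lattice of $T$ and to deduce flasqueness of $\widehat{S}$ from the corresponding geometric fact for smooth complete fans, following \cite{ColSan77La-R-equivalence}. Since $X$ is smooth, $\widehat{S} = \Cl(\overline{X}) = \Pic(\overline{X})$ is torsion-free, so \eqref{eq:TTS} is a short exact sequence of lattices
\[ 0 \to M \to \cox{M} \to \widehat{S} \to 0 . \]
First I would check that this is a sequence of $J$-lattices: the group $W$ acts on $T$ (hence on $M = \widehat{T}$) and permutes the rays $\Sigma(1)$ (hence acts on $\cox{M}$ and on $\Cl(\overline{X})$) compatibly with the two maps, and we restrict this action to $J$ via the splitting $J \hookrightarrow W$ of Theorem~\ref{thm:twoLines}. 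In particular $\cox{M}$ is a permutation $J$-lattice, since $J$ permutes the basis of $\cox{M}$ indexed by $\Sigma(1)$.

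Next I would reduce flasqueness of $\widehat{S}$ to a cohomological surjectivity statement. By the standard duality between flasque and coflasque lattices (\cite{ColSan77La-R-equivalence}), $\widehat{S}$ is flasque if and only if its dual $\widehat{S}^\circ := \Hom(\widehat{S},\bbZ)$ is coflasque, i.e.\ $H^1(H, \widehat{S}^\circ)$ vanishes for every subgroup $H \le J$. Dualizing the sequence above (exact because $\widehat{S}$ is torsion-free) gives
\[ 0 \to \widehat{S}^\circ \to \cox{N} \to N \to 0 , \]
with $\cox{N}$ again a permutation $J$-lattice and $N$ the cocharacter lattice of $T$; here the map $\cox{N} \to N$ sends the basis vector attached to a ray $\rho$ to its primitive generator $v_\rho$. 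Since $H^1(H, \cox{N}) = 0$ for permutation lattices, the long exact sequence identifies $H^1(H, \widehat{S}^\circ)$ with the cokernel of $(\cox{N})^{H} \to N^{H}$. Thus $\widehat{S}$ is flasque precisely when, for every $H \le J$, the invariants $N^{H}$ are generated by the orbit sums $\sum_{\rho \in O} v_\rho$, with $O$ running over the $H$-orbits in $\Sigma(1)$.

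The main obstacle is exactly this last, purely combinatorial surjectivity: it asserts that the invariant cocharacters of a smooth complete fan are spanned by orbit sums of primitive ray generators, which is the genuine geometric input. This is precisely the content of the flasqueness of $\Pic$ of a smooth complete toric variety established by Colliot-Th\'el\`ene and Sansuc, and I would invoke it from \cite{ColSan77La-R-equivalence} rather than reprove it. Everything else is formal: the $J$-equivariance of \eqref{eq:TTS}, the permutation structure of $\cox{M}$ and $\cox{N}$, and the vanishing of $H^1$ on permutation lattices. The quantification over all subgroups $H \le J$ causes no difficulty, since restricting the $J$-action to $H$ leaves us within the same class of finite group actions on smooth complete fans to which the cited result applies.
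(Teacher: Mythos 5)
Your proof is correct, but it takes a genuinely different route from the paper's. The paper treats Proposition~6 of \cite{ColSan77La-R-equivalence} as a black box about \emph{Galois} modules and manufactures a Galois action inducing the $J$-action: it reduces to characteristic $0$, base-extends so that there is a surjection $\Gamma_k \twoheadrightarrow J$, and twists by the resulting cocycle $c \in Z^1(k,J)$, so that $\widehat{S}$ with its $J$-action is identified with $\Pic$ of the smooth compactification ${}_cX$ of the torus ${}_cT$ carrying its Galois action; the cited proposition then applies verbatim. You instead stay entirely inside the fan: you dualize the $J$-equivariant sequence \eqref{eq:TTS} and reduce flasqueness of $\widehat{S}$ to surjectivity of $(\cox{N})^H \to N^H$ for every $H \le J$, and cite \cite{ColSan77La-R-equivalence} for that surjectivity. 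The one soft spot is the citation itself: their Proposition~6 is \emph{stated} for (the image of) a Galois group, whereas your $H$ is an abstract finite group of fan automorphisms, so strictly you must either inspect their proof (which establishes exactly your surjectivity statement and never uses the Galois provenance of the group) or insert the paper's base-change-and-twisting step to realize $H$ as a Galois image. This is a citation-level issue rather than a mathematical one; indeed your argument becomes self-contained with two more lines: given $n \in N^H$, completeness of the fan places $n$ in the relative interior of a unique minimal cone $\sigma$, which is therefore $H$-stable; smoothness of $\sigma$ gives $n = \sum_{\rho \in \sigma(1)} a_\rho v_\rho$ with nonnegative integer coefficients, which are constant on $H$-orbits by uniqueness of this expression, and since $\sigma$ is $H$-stable the $H$-orbits in $\sigma(1)$ are full $H$-orbits in $\Sigma(1)$, so $n$ is an integer combination of orbit sums. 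In exchange for having to open up the cited proof, your route is field-free and characteristic-free --- it treats flasqueness of $\widehat{S}$ as the purely combinatorial statement about the fan that it is, with no need for the paper's reduction to characteristic $0$ or its base extension --- while the paper's route buys the ability to quote \cite{ColSan77La-R-equivalence} exactly as stated, at the cost of those reductions.
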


\begin{proof}
We may assume that $k$ is of characteristic $0$
since the statement only depends on data associated to the fan.
Furthermore, by possibly taking a base extension of $k$,
we may assume that there exists a surjection $c : \Gamma_k \to J$.
The twisted torus ${}_cS$ has decomposition group $J$.
Proposition~6~of~\cite{ColSan77La-R-equivalence},
shows that the Picard group $\Pic(\overline{X})$ of a smooth
compactification of a torus is flasque as a $\Gamma_k$-module.
We conclude that ${}_cS$ is flasque as a $\Gamma_k$-module
and thus $\Pic(\overline{X})$ is flasque as a $J$-module.
\end{proof}

\begin{proof}[Proof of Theorem~\ref{thm:inj}]
There exists a coflasque resolution
\[ 1 \to \widehat{Q} \to \widehat{P} \to \widehat{S} \to 1 \]
of $\widehat{S}$. 
By Lemma~\ref{prop:omegaCoflasque}, any morphism as in (a) factors
through the one obtained from this resolution.  Therefore, it suffices
to only consider this morphism.  Since $S$ is flasque by
Proposition~\ref{prop:Sflasque}, we are in the situation of
Lemma~\ref{lem:injDetails}.  Thus (a) and (b) are equivalent.

By Theorem~3.14~of~\cite{Sal84Retract}, a neutral toric variety $X$ is
retract rational if and only if $\Pic(\overline{X})$ is invertible as a
$\Gamma_{k}$ lattice.  Thus (b) and (c) are equivalent.
\end{proof}

\section{Comparison to a construction of Merkurjev-Panin}
\label{sec:MerkPanin}

In \S 7 of \cite{MerPan97K-theory},
A.~Merkurjev and I.~Panin describe a construction which assigns a
separable algebra to each toric variety.  Up to Brauer equivalence,
their construction is essentially equivalent to ours.  In this section,
we make this precise.

In their construction, they fix a smooth projective toric variety $Y$
and an action of a specific torus $T$ on $Y$.
Their notion of isomorphism amounts to $T$-equivariant isomorphisms of
toric varieties.
The set of isomorphism classes is thus in bijection with $H^1(k,T)$ and
corresponds to the class of the open orbit $Y_0$ viewed as a
$T$-torsor.
All the forms of $Y$ belong to the same neutralization class
since $T$ acts trivially on $\Pic(\overline{Y})$.
The neutralization $X$ of $Y$ corresponds to the trivial element of
$H^1(k,T)$ and
there is an injection $H^1(k,T) \to H^1(k,\Aut_1(X))$
where $\Aut_1(X)$ is the group of restricted automorphisms as in
Remark~\ref{rem:restrictAuto}.

First, we review the construction of Section~\ref{sec:H2approx}
in a manner which is more directly comparable.
Note that $\widehat{S}=\Pic(\overline{X})$ is a $\Gamma_k$-lattice.
Choose a morphism $\widehat{m} : \widehat{P} \to \widehat{S}$
of $\Gamma_k$-lattices where $\widehat{P}$ is permutation with basis
$\omega$.
Our choice of $\omega$ gives rise to a quasitrivial torus $P$ and
an \'etale algebra $L$ and we obtain a homomorphism
\[ \alpha_\omega \colon
H^1(k,T) \to H^1(k,\Aut_1(X)) \to \Br(L) \]
which takes the isomorphism class $[Y_0]$ to a Brauer class in
$\Br(L)$.

Now we review the construction of Merkurjev-Panin.
For the moment, assume that $\omega$ is a single $\Gamma_k$-orbit and
thus that $L$ is a field extension of $k$.  The choice of $\omega$
gives us a morphism
\[ S \to P \simeq \Weil{L/k}(\bbG_{m,L}) \]
which is equivalent to a morphism $S_L \to \bbG_{m,L}$,
which, by duality, gives a morphism $\bbZ \to S_L^*$.  This last
morphism can be viewed as an element $\Omega$ of $\Pic(X_L)$.

Fix a Galois splitting field $K/k$ for $T$ which contains $L$.
We obtain an element $[Y_0] \in H^1(\Gamma_{K/k},T(K))$.
From the exact sequence
\[ 1 \to T^* \to \cox{T}^* \to S^* \to 1 \]
and the isomorphism $\Pic(X_L) \simeq H^0(\Gamma_{K/L},S^*)$,
we obtain an element $\partial [\Omega] \in H^1(\Gamma_{K/L},T^*)$
via the connecting homomorphism.
We may take the cup product $[(Y_0)_L] \cup \partial[\Omega]$ in
$H^2(\Gamma_{K/L},K^\times)$ via the standard pairing
$T(K) \otimes T^* \to K^\times$.  Thus, we obtain a map
\[ \beta_\omega \colon H^1(k,T) \to \Br(L) \]
as desired.
When $\omega$ is not a single $\Gamma_k$-orbit, we simply take products
and again obtain an element $\Omega$ in $\Pic(X_L)$ and a morphism
$\beta_\omega$.

\begin{thm} \label{thm:MerkPanin}
For any element $[Y_0] \in H^1(k,T)$,
our construction $\alpha_\omega$ and the Merkurjev-Panin
construction $\beta_\omega$
satisfy the relation
\[\alpha_\omega([Y_0]) = -\beta_\omega([Y_0])\]
in the group $\Br(L)$.
\end{thm}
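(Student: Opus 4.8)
The plan is to recognize both $\alpha_\omega$ and $\beta_\omega$ as two incarnations of a single connecting homomorphism, and then to reconcile them through a cup-product identity while keeping careful track of signs. First I would reduce $\alpha_\omega$ to a genuine connecting map. Since the acting torus $T$ is the maximal torus $\cox{T}/S$ of $\Aut_1(X)$, and the inclusion $T \hookrightarrow \Aut_1(X)$ lifts to $\cox{T} \hookrightarrow \GL_1(A)$ compatibly with $S$, functoriality of the connecting map of Theorem~\ref{thm:H2main} shows that the composite $H^1(k,T) \to H^1(k,\Aut_1(X)) \to H^2(k,S)$ is exactly the connecting map $\delta$ of the central extension $1 \to S \to \cox{T} \to T \to 1$. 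Thus $\alpha_\omega([Y_0]) = m_*\delta([Y_0])$, where $m : S \to P = \Weil{L/k}\bbG_m$ is dual to $\widehat{m}$. Because $m$ is adjoint to the character $\Omega : S_L \to \bbG_{m,L}$, Lemma~\ref{lem:Shapiro} identifies $m_*$ with $\Omega_* \circ \Res^L_k$; as $\delta$ commutes with restriction, this gives
\[ \alpha_\omega([Y_0]) = \Omega_*\,\delta_L\big([(Y_0)_L]\big) \in H^2(L,\bbG_m) = \Br(L), \]
and it suffices to treat a single $\Gamma_k$-orbit $\omega$ (so that $L$ is a field), the general case following by taking products.

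Next I would identify $\Omega_* \circ \delta_L$ with the connecting map of a pushout extension. Pushing $1 \to S \to \cox{T} \to T \to 1$ out along $\Omega : S \to \bbG_m$ produces a central extension $1 \to \bbG_m \to E \to T \to 1$ over $L$, and by naturality $\Omega_* \circ \delta_L$ is precisely its connecting map $\partial_E : H^1(L,T) \to H^2(L,\bbG_m)$. The task is then to pin down the class of $E$ in $\Ext^1_L(T,\bbG_m) \cong H^1(L, T^*)$. On character lattices, pushout of tori along $\Omega$ corresponds to pullback of the sequence $1 \to T^* \to \cox{T}^* \to S^* \to 1$ along the map $\bbZ \to S^*$ determined by $\Omega$; and the class of such a pullback is exactly the image of $\Omega \in H^0(L,S^*)$ under the connecting homomorphism of the lattice sequence. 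Hence the class of $E$ is $\pm\partial[\Omega]$, the same class appearing in the Merkurjev--Panin construction.

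Finally I would invoke the standard identity expressing the connecting map of a $\bbG_m$-extension as a cup product with its extension class: for $x \in H^1(L,T)$ one has $\partial_E(x) = \pm\, x \cup [E]$ via the evaluation pairing $T(K) \otimes T^* \to K^\times$ that also defines $\beta_\omega$. Combining the three steps yields $\alpha_\omega([Y_0]) = \pm\,[(Y_0)_L] \cup \partial[\Omega] = \pm\,\beta_\omega([Y_0])$. The hard part will be verifying that the accumulated signs collapse to exactly $-1$ rather than $+1$: one sign can enter in the identification $[E] = \pm\partial[\Omega]$ (from contravariance of the character functor) and another in the cup-product formula for $\partial_E$ (from the ordering convention of the cup product). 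To settle this I would carry out the bookkeeping directly on cochains: lifting a cocycle $a$ representing $[(Y_0)_L]$ to $\cox{a} : \Gamma_{K/L} \to \cox{T}(K)$ and the character $\Omega$ to $\tilde\Omega \in \cox{T}^*$, one expands
\[ \Omega\big(\delta_L(a)_{\sigma,\tau}\big) = \tilde\Omega(\cox{a}_\sigma)\,\tilde\Omega({}^\sigma\cox{a}_\tau)\,\tilde\Omega(\cox{a}_{\sigma\tau})^{-1}, \]
and recognizes the right-hand side, modulo the coboundary of the $1$-cochain $\sigma \mapsto \tilde\Omega(\cox{a}_\sigma)$, as the cup product $a \cup \partial[\Omega]$ with the sign recorded in the statement; here $\partial[\Omega]_\sigma = {}^\sigma\tilde\Omega - \tilde\Omega \in T^*$ is precisely the connecting cocycle. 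This explicit comparison simultaneously confirms the formula and fixes the sign as $-1$.
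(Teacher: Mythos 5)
Your first three steps are correct and essentially reproduce the paper's own reduction: the adjunction/Shapiro identification of $m_*$ with $\Omega_*\circ\Res^L_k$ is exactly the commutative diagram the paper establishes via the factorization of $m$ through the unit of the adjunction between $(-)_L$ and $\Weil{L/k}(-)$, and your ``pushout extension plus cup-with-the-extension-class'' packaging is a reformulation of what the paper does with Yoneda composition in $\Ext$ (Lemma~\ref{lem:homoAlg}). The genuine divergence is where the sign gets pinned down: the paper derives it from MacLane's identity $\partial(\alpha)=(-1)^i\,\alpha\circ\beta$ applied with $i=1$, whereas you defer it entirely to a cochain computation. That last step is where your proposal breaks down.

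Carry out your expansion with the conventions you state ($\partial[\Omega]_\sigma={}^\sigma\tilde\Omega-\tilde\Omega$ and $(u\cup v)_{\sigma,\tau}=\langle u_\sigma,{}^\sigma v_\tau\rangle$), writing $e_\sigma=\tilde\Omega(\cox{a}_\sigma)$ and $de$ for its coboundary. Using $\tilde\Omega({}^\sigma\cox{a}_\tau)={}^\sigma\bigl(\tilde\Omega(\cox{a}_\tau)\bigr)\cdot\langle\partial[\Omega]_\sigma,{}^\sigma a_\tau\rangle^{-1}$ one gets
\[
\Omega\bigl(\delta_L(a)_{\sigma,\tau}\bigr)
=(de)_{\sigma,\tau}\cdot\bigl\langle \partial[\Omega]_\sigma,\,{}^\sigma a_\tau\bigr\rangle^{-1},
\]
so the pairing term that actually appears is the cup product in the order $\partial[\Omega]\cup a$, \emph{not} $a\cup\partial[\Omega]$. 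These are not cohomologous: by graded commutativity of the cup product of two degree-one classes, $[\partial[\Omega]\cup a]=-[a\cup\partial[\Omega]]$. Hence your computation, done correctly in your conventions, yields $\alpha_\omega([Y_0])=+\beta_\omega([Y_0])$, the opposite of what you assert it ``confirms.'' The overlooked order swap, and the sign it costs, is precisely the content of the theorem; it cannot be waved through by ``recognizing'' the expansion as $a\cup\partial[\Omega]$. The deeper point is that the sign is convention-dependent --- the snake-lemma connecting map on inhomogeneous cochains and the $\Ext$-theoretic connecting map used by the paper differ by $(-1)^n$ in degree $n$ --- and the paper's $-1$ is pinned to MacLane's conventions. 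So your argument does establish $\alpha_\omega=\pm\beta_\omega$, but the step you yourself flag as ``the hard part'' is not resolved by the proposed bookkeeping: to fix the sign you must either track the graded-commutativity swap explicitly and then reconcile your connecting-map conventions with the paper's, or work in the $\Ext$-theoretic framework from the start, which is exactly what Lemma~\ref{lem:homoAlg} accomplishes.
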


\begin{proof}
It suffices to prove the theorem for $\omega$ a single $\Gamma_k$-orbit.
First, we claim that the following diagram commutes
\[
\xymatrix{
H^2(k,S) \ar[r]^{m_*} \ar[d] &
H^2(k,P) \ar[d]^{\simeq} \\
H^2(L,S_L) \ar[r]^{\Omega^*} &
H^2(L,\bbG_m)}
\]
where $P=\Weil{L/k}(\bbG_m)$.
Note that the morphisms
\[ m \colon S \to \Weil{L/k}(\bbG_m)\textrm{ and }
D(\Omega) \colon S_L \to \bbG_m \]
arise from the adjunction of the functors $(-)_L$ and $\Weil{L/k}(-)$.
Thus the morphism $m$ factors as
$\Weil{L/k}(D(\Omega)) \circ \eta(S)$ where
$\eta(S) : S \to \Weil{L/k}(S_L)$ is the unit of adjunction.
The morphism $H^2(k,S) \to H^2(L,S_L)$
factors through $H^2(k,S) \to H^2(k,\Weil{L/k}(S_L))$ by the functorial
construction of the restriction map (see \S 2.5~of~\cite{Ser02Galois}).
Thus, the diagram above is the same as 
\[
\xymatrix{
H^2(k,S) \ar[r] &
H^2(k,\Weil{L/k}(S_L)) \ar[r] \ar[d]^{\simeq} &
H^2(k,\Weil{L/k}(\bbG_m)) \ar[d]^{\simeq} \\
&
H^2(L,S_L) \ar[r] &
H^2(L,\bbG_m)
}
\]
which commutes since $\Weil{L/k}(-)(k) \to (-)(L)$ is a natural
transformation of functors from algebraic groups over $k$ to abstract groups.

Recall that $\alpha_\omega([Y_0]) = m_*(\partial[Y_0])$; thus, by the
above, we may instead write
$\alpha_\omega([Y_0]) =
\Omega^*((\partial[Y_0])_L)
= \partial[(Y_0)_L] \cup \Omega$.
Note that $\beta_\omega([Y_0]) = [(Y_0)_L] \cup \partial \Omega$.
Choosing a Galois splitting field $K$ for $L$ with Galois group $G$,
we need to compare two different methods of evaluating the map
\[ H^1(G,T(K)) \otimes H^0(G,S^*) \to H^2(G,K^\times) \ . \]
The theorem follows from Lemma~\ref{lem:homoAlg} below.
\end{proof}

\begin{lem} \label{lem:homoAlg}
Let $G$ be a finite group.  Let $A,B,C$ be $G$-lattices and $I$ be a
$G$-module.  Suppose there is an exact sequence
\[ 1 \to A \to B \to C \to 1 \ . \]
Up to the sign $(-1)^i$, the following square commutes
\[
\xymatrix{
H^i(G,\Hom(A,I)) \otimes H^j(G,C)
\ar[r]^{\partial \otimes \id} \ar[d]^{\id \otimes \partial} &
H^{i+1}(G,\Hom(C,I)) \otimes H^j(G,C)
\ar[d]^{\cup} \\
H^i(G,\Hom(A,I)) \otimes H^{j+1}(G,A)
\ar[r]^{\cup} &
H^{i+j+1}(G,I)
}
\]
for any integers $i \ge 0$, $j \ge 0$.
\end{lem}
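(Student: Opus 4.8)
The plan is to prove this by a direct computation with inhomogeneous cochains for the finite group $G$, exploiting that $A$, $B$, $C$ are $\bbZ$-free so that $\Hom(-,I)$ is exact. First I would apply $\Hom(-,I)$ to the given sequence. Since $C$ is a lattice, the sequence splits over $\bbZ$, so we obtain a short exact sequence of $G$-modules
\[ 1 \to \Hom(C,I) \to \Hom(B,I) \to \Hom(A,I) \to 1 \ . \]
The top connecting map $\partial \colon H^i(G,\Hom(A,I)) \to H^{i+1}(G,\Hom(C,I))$ is the one arising from this dual sequence, while the left connecting map $\partial \colon H^j(G,C) \to H^{j+1}(G,A)$ arises from the original sequence. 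The two cup products in the square are induced by the evaluation pairings $\mathrm{ev}_C \colon \Hom(C,I) \otimes C \to I$ and $\mathrm{ev}_A \colon \Hom(A,I) \otimes A \to I$.

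Next I would choose cocycle representatives $f \in Z^i(G,\Hom(A,I))$ for $\phi$ and $g \in Z^j(G,C)$ for $c$, and lift them: pick $\tilde f \in C^i(G,\Hom(B,I))$ restricting to $f$ (possible by surjectivity of $\Hom(B,I) \to \Hom(A,I)$) and $\tilde g \in C^j(G,B)$ lifting $g$. By the definition of the connecting maps, the coboundary $d\tilde f$ takes values in $\Hom(C,I)$ and represents $\partial(\phi)$, while $d\tilde g$ takes values in $A$ and represents $\partial(c)$.

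The key step is to form the single cochain $\tilde f \cup \tilde g \in C^{i+j}(G,I)$ using the evaluation pairing $\mathrm{ev}_B \colon \Hom(B,I) \otimes B \to I$ on the middle terms, and to apply the Leibniz rule
\[ d(\tilde f \cup \tilde g) = (d\tilde f) \cup \tilde g + (-1)^i\, \tilde f \cup (d\tilde g) \ . \]
I would then identify the two terms on the right. Since $d\tilde f$ has values in $\Hom(C,I)$, i.e. it factors through $B \to C$, evaluating it against $\tilde g$ via $\mathrm{ev}_B$ equals evaluating it against the image $g$ via $\mathrm{ev}_C$; hence $(d\tilde f) \cup \tilde g$ represents $\partial(\phi) \cup c$. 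Dually, since $d\tilde g$ lands in $A$, pairing $\tilde f$ against it via $\mathrm{ev}_B$ equals pairing the restriction $f$ against $d\tilde g$ via $\mathrm{ev}_A$; hence $\tilde f \cup (d\tilde g)$ represents $\phi \cup \partial(c)$. As the left-hand side is a coboundary, it vanishes in cohomology, yielding $\partial(\phi)\cup c = \pm(-1)^i\, \phi \cup \partial(c)$, which is the asserted commutativity up to the sign $(-1)^i$.

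I expect the only real difficulty to be the sign bookkeeping: verifying that the Leibniz rule, the chosen cup-product convention, and the definition of the connecting homomorphism combine to give exactly $(-1)^i$. The compatibility of the three evaluation pairings (that $\mathrm{ev}_B$ restricts to $\mathrm{ev}_A$ on $A$ and descends to $\mathrm{ev}_C$ on the quotient) is a routine diagram chase, and the existence of the lifts is immediate from the splitting over $\bbZ$; the genuine content lies in organizing these so that the resulting sign matches the convention needed in the proof of Theorem~\ref{thm:MerkPanin}.
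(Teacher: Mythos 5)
Your proposal is correct, and it is the classical cochain-level argument, but it takes a genuinely different route from the paper. The paper never works with cochains directly: it first identifies the cup-product-with-evaluation pairing $H^i(G,\Hom(M,I)) \otimes H^j(G,M) \to H^{i+j}(G,I)$ with Yoneda composition $\Ext_G^i(M,I) \otimes \Ext_G^j(\bbZ,M) \to \Ext_G^{i+j}(\bbZ,I)$ (via Proposition~III.2.2 of~\cite{Bro82Cohomology} and a chain-level check on a free resolution of $\bbZ$), then regards the exact sequence as a class $\beta \in \Ext_G^1(C,A)$ and quotes Theorem~III.9.1 of~\cite{Mac63Homology} for the formulas $\partial(\alpha) = (-1)^i \alpha \circ \beta$ and $\partial(\gamma) = \beta \circ \gamma$, so that the lemma reduces to associativity of Yoneda composition, $(\alpha \circ \beta)\circ\gamma = \alpha\circ(\beta\circ\gamma)$. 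Your Leibniz-rule computation is more elementary and self-contained: it needs only the snake-lemma description of the connecting maps and the identity $d(u \cup v) = du\cup v + (-1)^i u \cup dv$, and it makes visible why the sign depends on $i$. What the paper's route buys is exactly the point you flag as the remaining difficulty: the sign. With the conventions you state (unsigned snake-lemma $\partial$, standard cup product of cochains), your computation yields $\partial[f]\cup[g] = (-1)^{i+1}[f]\cup\partial[g]$ --- already visible at $i=j=0$, where the square \emph{anti}commutes --- which differs from the paper's $(-1)^i$ by an overall factor of $-1$. That factor is precisely the discrepancy between the cochain-level connecting map and Mac Lane's Ext-theoretic convention, in which the sign $(-1)^i$ is built into $\partial(\alpha) = (-1)^i\alpha\circ\beta$. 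This is harmless for the lemma as stated (``up to the sign'') and for its use in Theorem~\ref{thm:MerkPanin}, whose content is that the two constructions agree up to sign; but if you want your argument to produce the literal sign $(-1)^i$, you must adopt the Ext-theoretic convention for the contravariant $\partial$ (or absorb the extra $-1$ into it), since the naive conventions provably give $(-1)^{i+1}$.
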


\begin{proof}
First, we show that, for any $G$-lattice $M$, the
cup product and evaluation morphism
\[
a: H^i(G,\Hom(M,I)) \otimes H^j(G,M) \to
H^{i+j}(G,I)
\]
is isomorphic to the composition morphism
\[
b: \Ext_G^i(M,I) \otimes \Ext_G^j(\bbZ,M) \to
\Ext_G^{i+j}(\bbZ,I)
 \ .
\]
The isomorphisms between $\Ext$ groups and cohomology groups follow from
Proposition~III.2.2~of~\cite{Bro82Cohomology} since $M$ is a free
$\bbZ$-module.
Picking a free resolution $F_\bullet$ of the $G$-module $\bbZ$,
the maps $a$ and $b$ can be represented by maps of chain complexes
\[
\sHom_G(F_\bullet,\Hom(M,I)) \otimes
\sHom_G(F_\bullet,F_\bullet \otimes M)
\to \sHom_G(F_\bullet,I)
\]
as in V.4.2~of~\cite{Bro82Cohomology}.
The map $a$ takes elements $u \otimes v$ to
$eval_*( (u \otimes \id_M) \circ v)$.
The map $b$ is defined as $\psi(u) \circ v$ where
\[
\psi : \sHom_G(F_\bullet,\Hom(M,I)) \simeq \sHom_G(F_\bullet \otimes M,I)
\]
is the canonical isomorphism.
If $v(f) = \sum_i f_i \otimes m_i$ for some $f \in F_j$ then
we obtain
\[
eval_*( (u \otimes \id_M) \circ v)(f)
= eval_*( \sum_i u(f_i) \otimes m_i )
= \sum_i u(f_i)(m_i) = (\psi(u) \circ v)(f)
\]
for all $u \in \sHom(F_\bullet,\Hom(M,I))$ and
$v \in \sHom(F_\bullet,F_\bullet \otimes M)$
as desired.

Consider arbitrary elements $\alpha \in \Ext^i_G(A,I)$ and
$\gamma \in \Ext^j_G(\bbZ,C)$.
In view of the above, the statement of the lemma is equivalent to
showing that $(\partial \alpha) \circ \gamma = (-1)^i \alpha \circ (\partial
\gamma)$ as elements of $\Ext_G^{i+j+1}(\bbZ,I)$.
The exact sequence is an extension of $C$ by $A$ and thus it may be
considered as an element $\beta$ of $\Ext^1_G(C,A)$.
From Theorem~III.9.1~of~\cite{Mac63Homology} we find that
$\partial(\alpha)=(-1)^i\alpha \circ \beta$ and
$\partial(\gamma) = \beta \circ \gamma$.
Thus, the equality follows from the associativity of the composition
$(\alpha \circ \beta) \circ \gamma = \alpha \circ (\beta \circ \gamma)$
as in Theorem III.5.4~of~\cite{Mac63Homology}.
\end{proof}

\begin{remark}
As an immmediate corollary of Theorem~\ref{thm:MerkPanin}
we see that the Merkurjev-Panin construction is independent of the
choice of torus $T$.  Additionally, their construction can be extended
to arbitrary forms of $X$ rather than a fixed neutralization class by
properly accounting for the group of class group automorphisms $J$.
\end{remark}

\begin{remark}
The construction of Merkurjev-Panin as stated in~\cite{MerPan97K-theory}
actually produces \emph{algebras} rather than simply elements of the
Brauer group.
However, these algebras ultimately depend on a specific choice of torus $T$
and a choice of splitting field.
We discuss a more intrinsic method of producing algebras in the next
section.
\end{remark}

\begin{remark}
Note that $\Omega \in \Pic(X_L)$ is denoted by ``$Q$''
in~\cite{MerPan97K-theory}.
One cannot in general recover $\omega$ from $\Omega$,
as a corresponding set $\omega$ would only be $\Gamma_k$-invariant
rather than $J$-invariant.
However, this is not a serious shortcoming as one can always simply
expand $\omega$ to be $J$-stable.
Indeed, this is necessary if one wants to extend the Merkurjev-Panin
construction to \emph{all} forms of $X$ rather than a single fixed
neutralization class.
\end{remark}

\section{Associated Separable Algebras}
\label{sec:approximation}

In this section, we sharpen the results of Section~\ref{sec:H2approx}
and prove Theorem~\ref{thm:algInj}.
As discussed in Example~\ref{ex:prodProj}, products of projective spaces
are in bijective correspondence with their Cox endomorphism algebras.
Thus, studying the isomorphism classes of separable algebras
is equivalent to studying products of projective spaces.

\begin{thm} \label{thm:orbitToAlgebra}
Let $X$ be a split smooth projective toric variety.
Let $J$ be the group of class group automorphisms of $X$.
There exists a morphism
\[ f : X \to Y := \prod_{i=1}^N \bbP^{n_i} \]
where $n_1, \ldots, n_N$ are positive integers,
and a morphism of functors
\[ f_* : H^1(-,\Aut(X)) \to H^1(-,\Aut_J(Y)) \]
such that, if $X'$ is a form of $X$ and $Y'$ is a form of
$Y$ representing the class $f_*([X'])$, then there is a map
\[ f' \colon X' \to Y' \]
which coincides with $f$ over some field extension.
\end{thm}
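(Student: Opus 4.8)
The plan is to build the morphism $f$ explicitly from the $J$-equivariant linear-algebra data already assembled in Section~\ref{sec:splitToric}, then apply the functoriality machinery (Theorem~\ref{thm:H2main} and Definition~\ref{def:H2functoriality}) to descend it to all twisted forms. The starting point is the morphism $\widehat{m} : \widehat{P} \to \widehat{S}$ of $J$-lattices with $\widehat{P}$ permutation, which exists by Theorem~\ref{thm:inj} (condition (a)); its existence is guaranteed precisely under the invertibility hypothesis on $\Pic(X)$ coming from the surrounding results. Dually this gives a $J$-equivariant morphism of tori $m : S \to P$, where $P$ is quasi-split and splits as a product $P \simeq \prod_{i=1}^N \Weil{E_i/k}\bbG_m$ for étale algebras $E_i$ indexed by the $J$-orbits of the chosen basis $\omega$ of $\widehat{P}$. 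Each such factor is the Cartier dual datum governing a single projective space $\bbP^{n_i}$, so $Y = \prod_{i=1}^N \bbP^{n_i}$ is the toric variety whose Cox-theoretic structure group is exactly $P$.

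First I would construct $f : X \to Y$ geometrically over the split field using the characteristic space. Recall from Section~\ref{sec:splitToric} that $X$ is the categorical quotient $\psi : \cox{X} \to X$ of an $S$-invariant open subvariety $\cox{X} \subseteq V$, and similarly $Y$ is a quotient by $P$. The lattice morphism $\widehat{m}$ induces a graded ring homomorphism on Cox rings, hence an $S$-equivariant (via $m$) rational map $V \dashrightarrow V_Y$; the invertibility hypothesis, equivalently the flasqueness of $\widehat{S}$ from Proposition~\ref{prop:Sflasque}, is what guarantees this extends over the relevant loci so that it descends to an honest morphism $f : X \to Y$. Because each $V_\lambda$ is built $J$-equivariantly, the map $f$ is compatible with the $J$-actions on both sides, which is exactly what is needed to land in $\Aut_J(Y)$ rather than the full automorphism group.

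Next I would produce the natural transformation $f_* : H^1(-,\Aut(X)) \to H^1(-,\Aut_J(Y))$. The cleanest route is to observe that $f$ being $J$-equivariant yields a homomorphism $\Aut(X) \to \Aut_J(Y)$ at the level of the relevant structure groups — more precisely, via the decomposition $\tAut(X) \simeq U \rtimes \GL_1(A) \rtimes J$ from Theorem~\ref{thm:twoLines} and the corresponding decomposition for $Y$, the morphism $\widehat{m}$ induces a compatible map on the $\GL_1(A) \rtimes J$ pieces that respects the projection to $J$. Applying the functor $H^1(k,-)$ gives $f_*$, and naturality in the field is immediate since every ingredient ($\widehat{m}$, the quotient construction, the splittings) is defined over the prime field and commutes with base change, as was used in the proof of Theorem~\ref{thm:twoLines}.

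The final and genuinely substantive step is the descent claim: for a form $X'$ with $[X'] = [c] \in H^1(k,\Aut(X))$ and a form $Y'$ representing $f_*([X'])$, there is a morphism $f' : X' \to Y'$ agreeing with $f$ after base extension. The point is that $f$ is a morphism of $\Aut(X)$-objects once we equip $Y$ with the $\Aut(X)$-action pulled back along $\Aut(X) \to \Aut_J(Y)$, so $f$ is equivariant for the twisting cocycle $c$; the twisted morphism ${}_c f : {}_c X \to {}_c Y$ is then defined over $k$ by the general formalism of twisting equivariant morphisms (\S 5.3 of \cite{Ser02Galois}), and by construction ${}_c X \simeq X'$ and ${}_c Y \simeq Y'$. \textbf{The main obstacle} I anticipate is not the twisting itself but verifying that $f$ is genuinely $\Aut(X)$-equivariant as a morphism of \emph{varieties} — the induced map factors through structure groups acting on Cox data, and one must check that the unipotent radical $U$, which does not act by linear automorphisms on the weight spaces, is carried compatibly so that equivariance survives after passing from $\cox{X}$ to the quotient $X$. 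Handling $U$ correctly (rather than only the reductive quotient $\GL_1(A) \rtimes J$) is where the care lies, and Proposition~\ref{prop:ignoreU} should let me reduce the cohomological bookkeeping to the reductive part while the geometric morphism $f$ is defined using the full quotient.
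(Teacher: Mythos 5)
There is a genuine gap, and it lies in your construction of the morphism $f$ itself. First, you import a hypothesis the theorem does not have: Theorem~\ref{thm:orbitToAlgebra} is stated for an arbitrary split smooth projective toric variety, with no invertibility assumption on $\Pic(X)$ (invertibility enters only later, in Theorems~\ref{thm:geomInj} and~\ref{thm:algInj}, to get \emph{injectivity} of $f_*$). Second, and more seriously, the mechanism you propose for producing $f$ does not work. A $J$-equivariant lattice morphism $\widehat{m} \colon \widehat{P} \to \widehat{S}$ with $\widehat{P}$ permutation gives only a homomorphism of tori and, at best, a rational map between the spectra of Cox rings; whether this descends to an honest morphism $X \to Y$ has nothing to do with flasqueness or invertibility of $\widehat{S}$ (note also that these two notions are not equivalent: by Proposition~\ref{prop:Sflasque}, $\widehat{S}$ is \emph{always} flasque, whereas invertibility is an additional condition). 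What is actually needed is that each basis element of $\widehat{P}$ maps to a divisor class $D_i$ whose sheaf $\calO_X(D_i)$ is generated by global sections; only then does $D_i$ define a morphism $X \to \bbP^{n_i}$ with $n_i + 1 = \dim H^0(X,\calO_X(D_i))$ (this is also the only place the integers $n_i$ can come from --- your proposal never determines them), and only then does the induced map on characteristic spaces carry $\cox{X}$ into $\cox{Y}$ rather than into the irrelevant locus of $Y$. If $\widehat{m}$ sends basis elements to classes outside the nef cone, the rational map is undefined where you need it, and no lattice-theoretic property of $\widehat{S}$ can repair this.

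The paper supplies exactly this missing geometric input: since $X$ is projective, the ample cone spans $\Pic(\overline{X}) \otimes \bbR$, and the fixed locus of each nontrivial element of $J$ is a proper closed subset; hence one can choose a finite $J$-stable set $\omega$ of globally generated classes on which $J$ acts \emph{faithfully}. This $\omega$ determines $Y$, the dimensions $n_i$, and $f$, and the faithfulness is what makes the target $\Aut_J(Y)$ meaningful when $J$ is the group of class group automorphisms of $X$ --- a point your sketch also leaves unaddressed. Once $f$ exists, the group-theoretic part of your argument is broadly in line with the paper: $\GL_1(A)$ acts linearly on each $\Cox(X)_{D_i} = H^0(X,\calO_X(D_i))$, giving an $S$-equivariant homomorphism $\GL_1(A) \rtimes J \to \GL_1(B) \rtimes J$ that descends to $(\GL_1(A)/S) \rtimes J \to \Aut_J(Y)$. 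Your anticipated ``main obstacle'' about the unipotent radical is in fact a non-issue: one never needs $f$ to be equivariant under $U$ or under all of $\Aut(X)$, because Proposition~\ref{prop:ignoreU} lets one represent every class in $H^1(k,\Aut(X))$ by a cocycle with values in $(\GL_1(A)/S) \rtimes J$, and twisting by such cocycles already produces $f' \colon X' \to Y'$, exactly as the paper does.
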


\begin{proof}
Given a morphism
\[ f : X \to Y := \prod_{i=1}^N \bbP^{n_i} \]
as above, we obtain an induced map $\Pic(Y) \to \Pic(X)$.
Thus, such morphisms give rise to a
permutation basis $\omega$ of a permutation $J$-lattice
$\widehat{P} := \Pic(Y)$ as in Section~\ref{sec:H2approx}.
Conversely, if every element of $\omega$ corresponds to a sheaf which is
generated by global sections then we obtain such a morphism.

Since $X$ is projective, the ample cone spans $\Pic(\overline{X}) \otimes
\bbR$.  In addition, the fixed locus of $J$ must be a proper closed
subset of $\Pic(\overline{X}) \otimes \bbR$.  Thus, we may assume that
there exists a subset $\omega$ of $\Pic(\overline{X})$ whose corresponding
sheaves are generated by their global sections and such that $J$ acts
faithfully on $\omega$.  This provides the space $Y$ and the desired map
$f : X \to Y$.

The space $Y$ is also a smooth split projective toric variety.
We write $\cox{Y}$ as its characteristic space sitting inside a
vector space $W$ and note that
$\widehat{m} : \widehat{P} \to \Pic(\overline{X})$
is just the morphism $\Pic(Y) \to \Pic(X)$
induced by $f$.

Labeling the basis of $\widehat{P}$ as $\{ E_1, \ldots, E_r \}$
and labelling $\omega = \{ D_1, \ldots, D_r \}$,
we have $\widehat{m}(E_i)=D_i$ for every $i = 1, \ldots, r$.
By duality, we obtain a morphism
$m \colon S \to P$ of tori.

The construction of $f$ is equivalent to an isomorphism
\[
W^\vee = \bigoplus_{i=1}^r H^0(k,\calO_Y(E_i))
\to \bigoplus_{i=1}^r H^0(k,\calO_X(D_i))
\]
which gives rise to a ring homomorphism $F^* : \Cox(Y) \to \Cox(X)$
since $W$ contains all the generators of
$\Cox(Y)$.
The ring homomorphism $F^*$ is equivalent to a morphism $F : V \to W$.
Since each line bundle $\calO_X(D_i)$ is generated by global sections,
the image of $\cox{X}$ is contained in $\cox{Y}$ so we have a restricted
morphism $\cox{f} : \cox{X} \to \cox{Y}$.  This descends to the morphism
$f : X \to Y$ since the map $\cox{f}$ is $S$-equivariant via
$m \colon S \to P$.

Let $B$ be the Cox endomorphism algebra of $Y$.  We have the isomorphism
\[
\GL_1(B) \simeq \prod_{i=1}^r \GL(H^0(k,\calO(E_i))) \ .
\]
Note that $\tAut(Y) \simeq \GL_1(B) \rtimes I$
where $I$ is a finite group containing all possible permutations of
the subgroups in the product.
The group $\GL_1(B) \rtimes J$ is the preimage of the restricted
automorphism group $\Aut_J(Y)$.

The group $\GL_1(A)$ embeds in $\GL_1(B)$ since it has a linear action
on each component of $\Cox(X)_{D_i}$.
The group $J$ embeds in $\GL_1(B) \rtimes J$ by permuting monomials in the
generators of $\Cox(Y)$ since $\omega$ is $J$-stable.
We obtain an $S$-equivariant homomorphism
$\GL_1(A) \rtimes J \to \GL_1(B) \rtimes J$.

This descends to a morphism $\GL_1(A)/S \rtimes J \to \Aut_J(Y)$
after taking the quotient by $S$.
We conclude that the morphism $f$ is
$\GL_1(A)/S \rtimes J$-equivariant.
Using Proposition~\ref{prop:ignoreU},
we obtain $f' : X' \to Y'$ by descent.
\end{proof}

\begin{example}[Blunk's algebras] \label{ex:blunk}
Let $X$ be a split del Pezzo surface of degree $6$.
In the notation of Example~\ref{ex:DP6}, consider the elements
\begin{align*}
&a_1 = H\ ,  \quad
a_2 = 2H - E_1 - E_2 - E_3\ , \\
&b_1 = H - E_1\ , \quad
b_2 = H - E_2\ , \quad
b_3 = H - E_3
\end{align*}
in $\Pic(X)$.  The elements $a_1, a_2$ correspond to morphisms
$X \to \bbP^2$ while $b_1, b_2, b_3$ correspond to morphisms
$X \to \bbP^1$.
Here, $J$ is isomorphic to $S_3 \times S_2$ and
the $J$-orbits are $\omega_1 = \{ a_1, a_2 \}$
and $\omega_2 = \{ b_1, b_2, b_3 \}$.

Let $\omega = \omega_1 \cup \omega_2$.
In this special case, $\Aut_J(Y) = \Aut(Y)$.
Thus Theorem~\ref{thm:orbitToAlgebra} produces a natural transformation
\[ f_* : H^1(-,\Aut(X)) \to H^1(-,\Aut(Y)) \ . \]

Taking $B$ to be the Cox endomorphism algebra of $Y$, we see that
$\omega_1$ produces an Azumaya algebra $B_1$ of rank $3^2$
over an \'etale $k$-algebra of degree $2$;
$\omega_2$, produces an Azumaya algebra $B_2$ of rank $2^2$
over an \'etale $k$-algebra of degree $3$.
Theorem~3.4~of~\cite{Blu10Del-Pezzo} shows $f_*$ is injective
in this case.

(Note that Blunk also specifies embeddings of an \'etale subalgebra
into the algebras $B_1$ and $B_2$.
This is necessary to characterize the image of $f_*$,
but we are only concerned with whether the map is injective.
Blunk's Theorem~3.4 use a stronger notion of
isomorphism on the algebras $B_1$, $B_2$ which respects the subalgebra
embeddings.
However, we show that the stronger notion of isomorphism is equivalent
to the usual one; one can also see this directly.)
\end{example}

As our goal is to show the natural transformation $f_*$ is injective
under suitable conditions, the following example demonstrates why we
want to consider $\Aut_J(Y)$ rather than $\Aut(Y)$ in general.

\begin{example} \label{ex:P1P3toP3P3}
Continuing Example~\ref{ex:P1P3},
consider $X = \bbP^1 \times \bbP^3$ over $\bbR$ and let
\[ f : X \to \bbP^3 \times \bbP^3 \]
be the product of an inclusion $\bbP^1 \to \bbP^3$ as a linear
subspace and the isomorphism $\bbP^3 \to \bbP^3$.
The induced functor
\[
H^1(-,\Aut(X)) \to H^1(-,\Aut(Y))
\]
is \emph{not injective}
since the classes of $C \times \bbP^3$
and $\bbP^1 \times C'$ map to the same element.
However, if we consider the functor
\[
H^1(-,\Aut(X)) \to H^1(-,\Aut_J(Y))
\]
then we do have injectivity.
In this case injectivity can be fixed by a different choice of $\omega$,
but more generally the group $J$ will not be a product of symmetric groups
and such a fix will not exist.
\end{example}

\begin{thm} \label{thm:geomInj}
Suppose we are in the situation of Theorem~\ref{thm:orbitToAlgebra}.
If $\Pic(\overline{X})$ is invertible as a $J$-lattice there is a
canonical choice of maps such that $f_*$ is injective.
\end{thm}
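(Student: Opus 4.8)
The plan is to reduce the injectivity of $f_*$ to the injectivity of the induced map $m_* \colon H^2(-, S \to J) \to H^2(-, P \to J)$ from Definition~\ref{def:H2functoriality}, and then to arrange, using the invertibility hypothesis, that the globally generated classes $\omega$ produced in Theorem~\ref{thm:orbitToAlgebra} realize a \emph{split} surjection $\widehat{m} \colon \widehat{P} \to \widehat{S}$ of $J$-lattices.

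First I would assemble a commutative square. The $S$-equivariant homomorphism $\GL_1(A)\rtimes J \to \GL_1(B)\rtimes J$ built in Theorem~\ref{thm:orbitToAlgebra}, together with $m \colon S \to P$ (its restriction to $S$, which is the dual of $\widehat{m}$) and the identity on $J$, is a morphism between the two short exact sequences $1 \to S \to \GL_1(A)\rtimes J \to (\GL_1(A)/S)\rtimes J \to 1$ and $1 \to P \to \GL_1(B)\rtimes J \to \Aut_J(Y) \to 1$. By the naturality of the connecting map encoded in Lemma~\ref{lem:altH2desc} and Definition~\ref{def:H2functoriality}, the square whose vertical maps are $f_*$ and $m_*$ and whose horizontal maps are the connecting maps of Theorem~\ref{thm:H2main} commutes. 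Since the top horizontal map $H^1(-,\Aut(X)) \hookrightarrow H^2(-, S \to J)$ is injective by Theorem~\ref{thm:H2main}, a diagram chase shows that $f_*$ is injective as soon as $m_*$ is injective.

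Next I would reduce injectivity of $m_*$ to a splitting. If $\widehat{m}$ admits a $J$-equivariant section $\widehat{s} \colon \widehat{S}\to\widehat{P}$, then dualizing gives $s \colon P \to S$ with $s\circ m = \id_S$, and applying Definition~\ref{def:H2functoriality} functorially yields $s_* \circ m_* = \id$, so $m_*$ is injective. (Equivalently $\ker\widehat{m}$ is a direct summand of the permutation lattice $\widehat{P}$, hence invertible; one could instead note that Hilbert~90 for the quasi-split torus ${}_cP$ identifies the kernel of each component ${}_c(m'_*)$ with $H^1(K,{}_cQ)$ for $Q=D(\ker\widehat{m})$ and invoke the criteria of Lemma~\ref{lem:injDetails}.) Everything therefore comes down to choosing $\omega$ so that $\widehat{m}$ splits while every element of $\omega$ remains globally generated.

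Carrying out this choice is the main obstacle, since it requires reconciling the lattice-theoretic splitting forced by invertibility with the geometric positivity demanded by Theorem~\ref{thm:orbitToAlgebra}. As $\widehat{S} = \Pic(\overline{X})$ is an invertible $J$-lattice, I would fix a permutation $J$-lattice $\widehat{F}$ with basis $\{b_i\}$ and $J$-equivariant maps $\widehat{\iota}\colon \widehat{S}\to\widehat{F}$ and $\widehat{\rho}\colon \widehat{F}\to\widehat{S}$ with $\widehat{\rho}\widehat{\iota}=\id$, and set $g_i = \widehat{\rho}(b_i)$. Choosing a $J$-invariant ample class $A$ (average an ample class over its $J$-orbit) large enough that each $g_i + A$ is ample, and forming $\widehat{P}=\widehat{F}\oplus\widehat{F}'$ with a second copy $\{b_i'\}$, I would define the $J$-equivariant surjection $\widehat{m}(b_i)=g_i+A$, $\widehat{m}(b_i')=A$, so that $\omega$ consists of globally generated (indeed ample) classes. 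Writing $\widehat{\iota}(x)=\sum_i\lambda_i(x)b_i$, the map $\widehat{s}(x)=\sum_i\lambda_i(x)(b_i-b_i')$ is a $J$-equivariant section, since $\widehat{m}\widehat{s}(x)=\sum_i\lambda_i(x)g_i=\widehat{\rho}\widehat{\iota}(x)=x$. Finally, because $J$ acts faithfully on $\Pic(\overline{X})$, a generic $J$-orbit of ample classes is free; adjoining such an orbit to $\omega$ keeps $\widehat{m}$ split and makes $J$ act faithfully on $\omega$, so Theorem~\ref{thm:orbitToAlgebra} applies to this $\omega$. The resulting $f$ and $f_*$ are the required canonical maps, and by the first two steps $f_*$ is injective.
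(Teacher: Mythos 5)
Your reduction steps are sound and match the paper's skeleton: the commutative square in your first step is exactly Lemma~\ref{lem:H1H2} of the paper (proved there by the cocycle identity $s(\Delta b) = \Delta(s(b))$, which is the verification your appeal to ``naturality'' is standing in for), and injectivity of $f_*$ then follows from injectivity of $m_*$ together with the injectivity in Theorem~\ref{thm:H2main}. Where you genuinely diverge is in producing $\omega$. The paper takes $\omega$ to be the union, over all subgroups $G \subseteq J$, of the minimal generating sets of the monoids $\Pic(\overline{X})^G \cap \Nef(X)$; this forces $\widehat{P}^G \to \widehat{S}^G$ to be surjective for every $G$, hence the kernel $\widehat{Q}$ to be coflasque, and invertibility of $\widehat{S}$ then enters through Lemma~\ref{lem:injDetails}. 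You instead use invertibility up front to manufacture a $J$-equivariantly split surjection $\widehat{m}$, twisting by a large $J$-invariant ample class so that the images are ample, hence nef, hence globally generated (this last implication is the toric fact cited as \cite{Mus02Vanishing} in the paper's proof and should be cited); injectivity then comes from the retraction $s_* \circ m_* = \id$, which entirely bypasses the flasque/coflasque machinery. Your splitting and equivariance computations are correct, and in fact your final worry is unnecessary: since $\widehat{\rho}$ is surjective, the classes $g_i$ generate $\widehat{S}$, so any $j \in J$ fixing every $g_i + A$ and $A$ acts trivially on $\widehat{S}$ and is therefore the identity; faithfulness on $\omega$ is automatic.

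Two genuine issues remain. First, the theorem promises a \emph{canonical} choice of maps, and this clause is what the paper's Nef-cone construction delivers (see Remark~\ref{rem:coflasqueCanonical}); it is also what Theorem~\ref{thm:algInj} relies on to produce a ``canonical'' algebra $B$. Your construction depends on the choices of $\widehat{F}$, $\widehat{\iota}$, $\widehat{\rho}$, the ample class $A$, and an adjoined generic orbit, so it proves that an injective $f_*$ exists but not the canonicity assertion. Second, your $\widehat{P} = \widehat{F} \oplus \widehat{F}'$ is not of the form required by Theorem~\ref{thm:orbitToAlgebra}: there $\widehat{P}$ is the free lattice on a \emph{subset} $\omega \subset \Pic(\overline{X})$ of distinct classes, whereas all of your $b_i'$ (and possibly several $b_i$) map to the same class, so your lattice is strictly bigger than $\bbZ[\omega]$ for the underlying set $\omega$. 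This is repairable: the collapse map $c \colon \widehat{F} \oplus \widehat{F}' \to \bbZ[\omega]$ sending each basis vector to the basis vector indexed by its image class is $J$-equivariant and satisfies $\widehat{m}_{\omega} \circ c = \widehat{m}$, where $\widehat{m}_{\omega}$ is the evaluation map of $\bbZ[\omega]$; hence $c \circ \widehat{s}$ is a $J$-equivariant section of $\widehat{m}_{\omega}$ and the argument goes through with $\bbZ[\omega]$ in place of your $\widehat{P}$. But as written, this step is a gap: Theorem~\ref{thm:orbitToAlgebra} does not literally apply to your $\omega$.
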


\begin{proof}
The construction of $\omega$ is as follows.
Let $\Nef(X)$ be the Nef cone of $X$.
By Theorem~3.1~of~\cite{Mus02Vanishing}, we see that the divisors in the
Nef cone of $X$ are precisely those generated by their global sections;
moreover, $\Nef(X)$ can be extracted from the fan so it suffices to
assume we are working over $\bbC$.
We may identify $\Nef(X)$
with a strongly convex rational polyhedral cone of full dimension in
$\Pic(X) \otimes \bbR = S^* \otimes \bbR$
(see Theorems~6.3.20 and 6.3.22 of~\cite{CoxLitSch11Toric}).
The intersection $\Nef(X) \cap S^*$ is precisely the set of
line bundles which are generated by their global sections.

For every subgroup $G$ of $J$, consider the intersection
$M_G = \Pic(\overline{X})^G \cap \Nef(X)$.
Since the intersection of a rational polyhedral cone and a subspace
cut out by rational linear equations is again a rational polyhedral cone,
the monoid $M_G$ is finitely generated.
Indeed, since the cones are strongly convex,
each monoid $M_G$ has a canonical minimal generating set $C_G$.
Take $\omega$ to be the union of the $C_G$ for each $G \subset J$.
Note that $\omega$ is $J$-stable since $j(C_G)=C_{j(G)}$ for each $j \in
J$. 

We claim that $C_G$ spans $\Pic(\overline{X})^G$ for every $G \subset J$.
Indeed, we may find a field $K$ and a cocycle $c \in Z^1(K,G)$ such that
$\Pic({}_cX) = \Pic(\overline{X})^G$.  Since projectivity is a geometric
property, the ample cone (and thus the Nef cone) is of full dimension in
$\Pic({}_cX)$.  Thus, $M_G$ is of full dimension in $\Pic(\overline{X})^G$.
Thus $C_G$ spans $\Pic(\overline{X})^G$ as desired.

As in Lemma~3~of~\cite{ColSan77La-R-equivalence},
since we have chosen $\omega$ such that
$\widehat{P}^G \to \Pic(\overline{X})^G$ is
surjective for all subgroups $G \subset J$, we conclude that
the kernel $\widehat{Q}$ is coflasque.
In particular, the action of $J$ is faithful.
Furthermore, every element of $\omega$ corresponds to a sheaf which is
generated by global sections.
The result now follows from Lemma~\ref{lem:injDetails},
Theorem~\ref{thm:orbitToAlgebra} and Lemma~\ref{lem:H1H2} below.
\end{proof}

\begin{lem} \label{lem:H1H2}
In the situation of Theorem~\ref{thm:orbitToAlgebra} we have the
following commutative diagram
\[
\xymatrix{
H^1(k, \Aut(X)) \ar@{^{(}->}[r]^{\delta} \ar[d]^{f_*}&
H^2(k, S \to J) \ar[d]^{m_*} \\
H^1(k, \Aut_J(Y)) \ar@{^{(}->}[r]^{\delta} &
H^2(k, P \to J)
}
\]
where the horizontal maps are as in Theorem~\ref{thm:H2main}.
\end{lem}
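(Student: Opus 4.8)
The plan is to recognize the square as an instance of the naturality of the connecting map $\delta$ from Section~\ref{sec:non-abelianH2} with respect to a morphism of central extensions. By Proposition~\ref{prop:ignoreU} we may replace $\tAut(X)$, $\Aut(X)$ by $\GL_1(A)\rtimes J$, $(\GL_1(A)/S)\rtimes J$ and, applying the same proposition to $Y$, replace $\tAut(Y)$, $\Aut_J(Y)$ by $\GL_1(B)\rtimes J$, $(\GL_1(B)/P)\rtimes J$, so that both $\delta$'s arise from central extensions of the shape $\cox{G}=R\rtimes J$ treated in Lemma~\ref{lem:altH2desc}. The homomorphism $\GL_1(A)\rtimes J \to \GL_1(B)\rtimes J$ produced in the proof of Theorem~\ref{thm:orbitToAlgebra} restricts to $m\colon S\to P$ on the central kernels, is the identity on the $J$-factor, and descends to the map $\phi\colon (\GL_1(A)/S)\rtimes J \to (\GL_1(B)/P)\rtimes J$ inducing $f_*$. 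Thus I would first assemble the commutative ladder of central extensions
\[
\xymatrix{
1 \ar[r] & S \ar[r] \ar[d]^{m} & \GL_1(A) \rtimes J \ar[r] \ar[d]^{\cox{\phi}} & (\GL_1(A)/S) \rtimes J \ar[r] \ar[d]^{\phi} & 1 \\
1 \ar[r] & P \ar[r] & \GL_1(B) \rtimes J \ar[r] & (\GL_1(B)/P) \rtimes J \ar[r] & 1
}
\]
whose top and bottom rows compute $\delta$ for $X$ and $Y$ respectively.

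Next I would compute with cocycles using the explicit formula for $\delta$ from the discussion preceding Lemma~\ref{lem:agreesWithSpringer}. Given $[a]\in H^1(k,\Aut(X))$ represented by $a\in Z^1(k,(\GL_1(A)/S)\rtimes J)$, choose a continuous lift $\cox{a}\colon\Gamma_k\to(\GL_1(A)\rtimes J)(k_s)$; then $\delta([a])$ is the class of $\Delta\cox{a}\in Z^2(k,{}_{\pi(a)}S)$. Since $\cox{\phi}$ covers $\phi$, the composite $\cox{\phi}\circ\cox{a}$ is a lift of $\phi\circ a$, so $\delta(f_*[a])$ is the class of $\Delta(\cox{\phi}\circ\cox{a})$. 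Because $\cox{\phi}$ is a group homomorphism defined over $k$, hence commuting with the $\Gamma_k$-action, we obtain
\[
\big(\Delta(\cox{\phi}\circ\cox{a})\big)_{\sigma,\tau}
= \cox{\phi}\big(\cox{a}_{\sigma}\,({}^{\sigma}\cox{a}_{\tau})\,(\cox{a}_{\sigma\tau})^{-1}\big)
= \cox{\phi}\big((\Delta\cox{a})_{\sigma,\tau}\big)
= m\big((\Delta\cox{a})_{\sigma,\tau}\big),
\]
the last equality because $(\Delta\cox{a})_{\sigma,\tau}\in S$ and $\cox{\phi}|_S=m$. By Definition~\ref{def:H2functoriality} the right-hand side is precisely a cocycle representing $m_*(\delta[a])$, which gives the desired identity $\delta\circ f_* = m_*\circ\delta$.

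It then remains to settle the bookkeeping of the disjoint-union decomposition of Lemma~\ref{lem:altH2desc}. I would check that both sides land in the same component: since $\cox{\phi}$ is the identity on $J$, the image of $a$ in $H^1(k,J)$ agrees with that of $\phi\circ a$, so $g_*(c)=c$ for $g=\id_J$ in Definition~\ref{def:H2functoriality} and the indices match; the quotients by $H^0(k,{}_cJ)$ are respected because $m$ is $J$-equivariant, so the component map is exactly the map $H^2(k,{}_cS)\to H^2(k,{}_cP)$ used to build $m_*$, and the twisted actions align for the same reason.

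I expect the main obstacle to be the verification that the homomorphism $\cox{\phi}$ supplied by Theorem~\ref{thm:orbitToAlgebra} is genuinely a morphism of the two central extensions after the reduction of Proposition~\ref{prop:ignoreU}: that it restricts to $m$ on $S$, is the identity on $J$, and that discarding the unipotent radicals does not perturb $\delta$. Once this compatibility is established, the cocycle identity displayed above is an immediate consequence of $\cox{\phi}$ being a group homomorphism defined over $k$.
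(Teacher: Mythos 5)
Your proposal is correct and follows essentially the same route as the paper: the paper's proof likewise takes the homomorphism $s \colon \GL_1(A) \rtimes J \to \GL_1(B) \rtimes J$ supplied by the proof of Theorem~\ref{thm:orbitToAlgebra}, lifts a cocycle $a$ representing a class in $H^1(k,\Aut(X))$ to a continuous function $b$ into $(\GL_1(A)\rtimes J)(k_s)$, and concludes from $s(\Delta b) = \Delta(s(b))$ --- your identity $\cox{\phi}(\Delta\cox{a}) = \Delta(\cox{\phi}\circ\cox{a})$, together with $s|_S = m$ --- that the square commutes. Your additional bookkeeping via Proposition~\ref{prop:ignoreU} and Lemma~\ref{lem:altH2desc} just spells out compatibilities the paper leaves implicit.
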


\begin{proof}
From the proof of~Theorem~\ref{thm:orbitToAlgebra},
there is a morphism
\[ s : \GL_1(A) \rtimes J \to  \GL_1(B) \rtimes J \]
which descends to the group homomorphism
\[ r : \GL_1(A)/S \rtimes J \to \Aut_J(Y) \]
making $f$ equivariant.

Let $a$ be a cocycle in
$H^1(k, \GL_1(A) \rtimes J) \simeq H^1(k, \Aut(X))$.
To map $a$ to $H^2(k, S \to J)$, we lift to
a continuous function $b : \Gamma_k \to (\GL_1(A) \rtimes J)(k_s)$
and form the $2$-cocycle $\Delta b$ in $Z^2(k,{}_cS)$
for an appropriate cocycle $c \in Z^1(k,J)$.
To map this to $H^2(k, P \to J)$, we simply take the
$2$-cocycle $s(\Delta b)$ in $Z^2(k,{}_cP)$.

Going the other way, map $a$ to $H^1(k, \Aut_J(Y))$ by
the morphism $r$.  To map $r(a)$ to $H^2(k, P \to J)$,
we may take the function $s(b)$ as a lift and
then take $\Delta(s(b))$;
again, this cocycle sits in $Z^2(k,{}_cP)$.

Since $s(\Delta b) = \Delta(s(b))$,
the diagram commutes.
\end{proof}

\begin{remark} \label{rem:coflasqueCanonical}
Note that construction used in the proof of Theorem~\ref{thm:geomInj}
has the advantage of being canonical.  However, it may not be very
economical: in Example~\ref{ex:blunk}, $\omega$ is simply the minimal
generating set for $\Nef(X)$ while our canonical construction produces
a larger set.
\end{remark}

Finally, we are in position to prove Theorem~\ref{thm:algInj}.

\begin{proof}[Proof of Theorem~\ref{thm:algInj}]
By Theorem~\ref{thm:orbitToAlgebra} we may construct a product of projective
spaces $Y = \prod \bbP^{n_i}$ along with a natural transformation
\[
H^1(-,\Aut(X)) \to H^1(-,\Aut_J(Y))
\]
Moreover, since $\Pic(\overline{X})$ is an invertible $J$-lattice,
we may select $Y$ such that the natural transformation is injective
by Theorem~\ref{thm:geomInj}.
Following Example~\ref{ex:prodProj} there is a canonical identification
between the forms of $Y$ and the forms of its Cox endomorphism algebra $B$.
Thus we have a canonical separable algebra $B$ with an injective natural
transformation
\[
H^1(-,\Aut(X)) \to H^1(-,\Aut_J(B))
\]
as desired.
\end{proof}

\section*{Acknowledgements}
The author would like to thank
B.~Antieau,
M.~Borovoi,
M.~Brown,
D.~Cox,
A.~Merkurjev,
Z.~Reichstein,
A.~Ruozzi,
and anonymous referees
for helpful comments and discussions.

\end{document}